\documentclass[a4paper]{amsart}
\usepackage{amssymb}
\usepackage[mathcal]{euscript}
\usepackage[cmtip,all]{xy}


\newcommand{\bydef}{:=}

\newcommand{\id}{\mathrm{id}}

\DeclareMathOperator*{\ot}{\otimes}




\newcommand{\cA}{\mathcal{A}}
\newcommand{\cB}{\mathcal{B}}

\newcommand{\cJ}{\mathcal{J}}

\newcommand{\cL}{\mathcal{L}}

\newcommand{\cR}{\mathcal{R}}
\newcommand{\cS}{\mathcal{S}}

\newcommand{\cU}{\mathcal{U}}
\newcommand{\cV}{\mathcal{V}}
\newcommand{\cW}{\mathcal{W}}




\newcommand{\ZZ}{\mathbb{Z}}

\newcommand{\CC}{\mathbb{C}}

\newcommand{\FF}{\mathbb{F}}

\newcommand{\chr}[1]{\mathrm{char}\,#1}



\DeclareMathOperator{\End}{\mathrm{End}}
\DeclareMathOperator{\Alg}{\mathrm{Alg}}

\DeclareMathOperator{\Aut}{\mathrm{Aut}}
\DeclareMathOperator{\AAut}{\mathbf{Aut}}

\DeclareMathOperator{\supp}{\mathrm{Supp}}



\newcommand{\frsl}{{\mathfrak{sl}}}














\newcommand{\subo}{_{\bar 0}}
\newcommand{\subuno}{_{\bar 1}}




\DeclareMathOperator{\DDiag}{\mathbf{Diag}}
\DeclareMathOperator{\Mult}{\mathrm{Mult}}

\newtheorem{theorem}{Theorem}[section]
\newtheorem{proposition}[theorem]{Proposition}
\newtheorem{lemma}[theorem]{Lemma}
\newtheorem{corollary}[theorem]{Corollary}

\theoremstyle{definition}
\newtheorem{df}[theorem]{Definition}
\newtheorem{example}[theorem]{Example}

\theoremstyle{remark}
\newtheorem{remark}[theorem]{Remark}

\begin{document}

\title{Gradings on semisimple algebras}

\author[A.S.~C\'ordova-Mart{\'\i}nez]{Alejandra S.~C\'ordova-Mart{\'\i}nez${}^\star$}
\address{Departamento de Matem\'{a}ticas
 e Instituto Universitario de Matem\'aticas y Aplicaciones,
 Universidad de Zaragoza, 50009 Zaragoza, Spain}
\email{sarina.cordova@gmail.com}
\thanks{${}^\star$ 
Supported by grants MTM2017-83506-C2-1-P (AEI/FEDER, UE) and E22\_17R (Grupo de referencia \'Algebra y Geometr{\'\i}a, Diputaci\'on General de Arag\'on).\quad
A.S.~C\'ordova-Mart{\'\i}nez also acknowledges support from the Consejo Nacional de Ciencia y Tecnolog{\'\i}a
(CONACyT, M\'exico) through grant 420842/262964.}

\author[A. Elduque]{Alberto Elduque${}^\star\,{}^\dagger$}
\address{Departamento de Matem\'{a}ticas
 e Instituto Universitario de Matem\'aticas y Aplicaciones,
 Universidad de Zaragoza, 50009 Zaragoza, Spain}
\email{elduque@unizar.es}
\thanks{${}^\dagger$
Corresponding author}

\subjclass[2010]{Primary 17B70, 16W50}

\keywords{Grading; fine; semisimple algebra; graded-simple; loop algebra} 

\date{}

\begin{abstract}
The classification of gradings by abelian groups on finite direct sums of simple finite-dimensional nonassociative algebras over an algebraically closed field is reduced, by means of the use of loop algebras, to the corresponding problem for simple algebras. This requires a good definition of (free) products of group-gradings.
\end{abstract}

\maketitle

\section{Introduction}\label{se:intro}

In 1989, Patera and Zassenhaus \cite{PZ89} undertook a systematic study of gradings by abelian groups on finite-dimensional simple Lie algebras over the complex numbers, with fine gradings as the central objects. A key example of fine grading is the root space decomposition of a finite-dimensional semisimple Lie algebra relative to a Cartan subalgebra, but there are many other fine gradings that reflect the symmetries of these algebras. A description of fine gradings on the classical simple Lie algebras (other than $D_4$, which is exceptional in many aspects) over $\CC$ followed in \cite{HPP98}. The classification of fine gradings on all finite-dimensional simple Lie algebras over an algebraically closed field has been recently completed through the efforts of many authors: see \cite{EKmon,YuExc,Eld16}. A survey of the main ideas and results appears in \cite{DE}.

Many of the gradings that appear are related to gradings on simple associative or Jordan algebras, and the
gradings by abelian groups on finite-dimensional simple Lie, associative, or Jordan algebras have also been classified in the last years (see \cite{EKmon} and the references therein). If the grading group becomes part of the definition of the grading, then the right classification is the classification of $G$-gradings up to isomorphism. On the other hand, any grading on a finite-dimensional algebra is a coarsening of a fine grading, so fine gradings, like the root space decomposition mentioned above, become the key objects of study and of classification up to equivalence.

Time is ripe to try to extend the known classifications on simple algebras to semisimple algebras. Usually, the word `semisimple' refers to a given `radical' being zero.  However, in many cases of interest, like finite-dimensional Lie algebras over fields of characteristic zero, or finite-dimensional associative or Jordan algebras, the semisimple algebras are just the finite direct sums of simple algebras. In this paper, we will take this restricted definition of `semisimple algebras'. 

Therefore, in what follows, a \emph{semisimple algebra} will refer to a finite direct sum of simple algebras. The word `algebra' will refer to a vector space over a ground field $\FF$, endowed with a bilinear multiplication. No other restriction will be imposed, so our results apply to any variety of nonassociative (i.e., not necessarily associative) algebras.

The goal of this work is to show that the classification of gradings on semisimple algebras can be reduced, through the use of loop algebras, studied thoroughly in \cite{loop}, to the known classifications for simple algebras. This reduction is far from trivial, and requires some preparation, that includes a good definition of product gradings, which allow us to move from gradings on some algebras, to gradings on their direct sum.

In a recent paper \cite{CDE}, the study of gradings (and the affine group scheme of automorphisms) on the $10$-dimensional Kac's Jordan superalgebra was reduced to the study of gradings on the direct sum of two copies of the `tiny' ($3$-dimensional) Kaplansky superalgebra. In this case the dimension is small enough so that some `ad hoc' arguments allow a complete classification.
On the other hand, the first author has been able to reduce, in work in preparation, the classification of gradings on tensor products of Cayley algebras to the corresponding problem on cartesian products of Cayley algebras, where the results on this paper apply to reduce the problem to gradings on Cayley algebras, which are well understood \cite{Eld98}. The tensor products of two Cayley algebras are examples of simple structurable algebras, that can be used to give some nice explicit constructions of the exceptional simple Lie algebras.

\smallskip

The next section will present the background needed on gradings. Some standard definitions will be slightly changed, in order to give a precise definition of product gradings. We are interested in gradings by abelian groups, but will also look to more general gradings, for which the notion of product grading is quite natural (Definition \ref{df:product_grading}). Any general grading has a finest coarsening which is a group-grading, with the same universal group. This is the clue to define the free product group-grading in Definition \ref{df:free_product_grading}.

Section 3 will be devoted to review the definition and some properties of loop algebras, defined and studied in \cite{loop}. The universal groups of the gradings on these algebras will be determined. These results are used in Section 4 to classify $G$-gradings, up to isomorphism, on finite-dimensional semisimple algebras over an algebraically closed field.

The final section 5 is devoted to fine group-gradings on semisimple algebras. In general, fine group-gradings on a semisimple algebra are equivalent to free product group-gradings of fine gradings on semisimple and graded-simple algebras, but some restrictions apply. Corollary \ref{co:fine_semisimple} gives the classification, up to equivalence, of the fine group-gradings on finite-dimensional semisimple algebras over an algebraically closed field.

\smallskip

\section{Background on gradings}\label{se:gradings}

This section is devoted to a review of the main concepts on gradings on algebras.
The general background on gradings can be found in the monograph \cite{EKmon}. However some slight variations of the main definitions will be performed here, with a view towards a right definition of \emph{product gradings}, which will be instrumental in classifying fine gradings, up to equivalence, on semisimple algebras.

\medskip

\subsection{Gradings}\label{ss:gradings}
We start with a very general definition of grading.

\begin{df}\label{df:grading}
A \emph{grading} on an algebra $\cA$ over a field $\FF$ is a set $\Gamma$ of nonzero subspaces of $\cA$ such that $\cA=\bigoplus_{\cU\in \Gamma}\cU$ and for any $\cU,\cV\in\Gamma$, there is $\cW\in \Gamma$ such that $\cU\cV\subseteq \cW$.
\end{df}

There are several natural related notions in
 the situation of Definition \ref{df:grading}: 
 
\begin{itemize}

\item The pair $(\cA,\Gamma)$ is said to be a \emph{graded algebra}.

\item The elements of $\Gamma$ are called the \emph{homogeneous components}. The nonzero elements of the homogeneous components are called \emph{homogeneous elements}.

\item A subalgebra $\cS$ of $\cA$ is called a \emph{graded subalgebra} if $\cS=\bigoplus_{\cU\in\Gamma}\cU\cap\cS$. In this case 
\begin{equation}\label{eq:Gamma_S}
\Gamma\vert_\cS\bydef \{\cU\cap\cS\mid \cU\in \Gamma\ \text{and}\ 0\neq \cU\cap\cS\}
\end{equation}
is the \emph{induced grading} on $\cS$. A \emph{graded ideal} is an ideal which is, at the same time, a graded subalgebra.

\item The graded algebra $(\cA,\Gamma)$ is said to be \emph{graded-simple} if $\cA$ does not contain any proper graded ideal and $\cA^2\neq 0$. In this case $\cA^2=\cA$, as $\cA^2$ is always a graded ideal.

\item Given another grading $\Gamma'$ on $\cA$, $\Gamma$ is said to be a \emph{refinement} of $\Gamma'$ (and $\Gamma'$ a \emph{coarsening} of $\Gamma$) if any subspace $\cU\in\Gamma$ is contained in a subspace in $\Gamma'$. If at least one of these containments is proper, then the refinement is said to be proper. In this situation $\Gamma$ is said to be \emph{finer} than $\Gamma'$, and $\Gamma'$ \emph{coarser} than $\Gamma$, and the map $\pi:\Gamma\rightarrow \Gamma'$, that sends any $\cU\in \Gamma$ to the element $\cU'\in\Gamma'$ that contains it, is a surjection. The refinement is proper if, and only if, $\pi$ is not a bijection.

\item The grading $\Gamma$ is said to be \emph{fine} if it admits no proper refinements. 

Any grading on a finite-dimensional algebra is a coarsening of a fine grading.

\item Given two graded algebras $(\cA,\Gamma)$ and $(\cA',\Gamma')$, an \emph{equivalence} $\varphi:(\cA,\Gamma)\rightarrow (\cA',\Gamma')$ is an isomorphism $\varphi:\cA\rightarrow \cA'$ such that $\varphi(\cU)\in\Gamma'$ for each $\cU\in\Gamma$.

\end{itemize}

Given a graded algebra $(\cA,\Gamma)$, consider the abelian group $U(\Gamma)$ generated by the set $\Gamma$, subject to the relations $\cU\cV\cW^{-1}=e$ ($e$ denotes the neutral element) for each triple $\cU,\cV,\cW$ in $\Gamma$ such that $0\neq \cU\cV\subseteq \cW$:
\[
U(\Gamma)\bydef\langle \Gamma\mid \cU\cV\cW^{-1}=e\ \text{if}\ 0\neq \cU\cV\subseteq \cW\rangle.
\]
That is, $U(\Gamma)$ is the quotient of the free abelian group generated by $\Gamma$, modulo the normal subgroup generated by the elements $\cU\cV\cW^{-1}$ above. Consider also the natural map:
\[
\begin{split}
\delta_\Gamma^U:\Gamma&\longrightarrow U(\Gamma)\\
  \cU\,&\mapsto\ [\cU],
\end{split}
\]
where $[\cU]$ denotes the class of $\cU$ in $U(\Gamma)$.

\begin{df}\label{df:universal_group}
The pair $\bigl(U(\Gamma),\delta_\Gamma^U\bigr)$ is called the \emph{universal group} of the grading $\Gamma$.
\end{df}

There is a natural notion of homomorphism between pairs $(G,\delta)$, for a group $G$ and a map $\delta:\Gamma\rightarrow G$. Given two such pairs $(G^i,\delta^i)$, $i=1,2$, a homomorphism is a group homomorphism $\varphi:G^1\rightarrow G^2$ such that the diagram
\[
\xymatrix{
&& G^1\ar[dd]^{\varphi}\\
\Gamma\ar[rru]^{\delta^1}\ar[rrd]^{\delta^2} &\\
&& G^2
}
\]
is commutative.

\begin{example}\label{ex:trivial_grading}
The \emph{trivial grading} on a nonzero algebra $\cA$ is the grading $\Gamma=\{\cA\}$. We have two possibilities for its universal group:
\begin{itemize}
\item if $\cA^2\neq 0$, then $U(\Gamma)$ is the trivial group.
\item if $\cA^2=0$, then $U(\Gamma)$ is the infinite cyclic group (isomorphic to $\ZZ$).
\end{itemize}
\end{example}

\begin{example}\label{ex:FxF}
Consider the cartesian product $\cA=\FF\times\FF=\FF e_1\oplus\FF e_2$, for the orthogonal idempotents $e_1=(1,0)$ and $e_2=(0,1)$. Then $\Gamma=\{\FF e_1,\FF e_2\}$ is a grading. Denote $u_i\bydef \FF e_i$, $i=1,2$. Then
\[
U(\Gamma)=\langle u_1,u_2\mid u_1^2=u_1,\ u_2^2=u_2\rangle=\{e\}
\]
is the trivial group, even though our grading $\Gamma$ is not trivial.
\end{example}

The next result is a direct consequence of the definitions:

\begin{proposition}\label{pr:equivalence}
Let $\varphi:(\cA,\Gamma)\rightarrow (\cA',\Gamma')$ be an equivalence, then $\varphi$ induces a bijection $\alpha_{\varphi}:\Gamma\rightarrow \Gamma'$ given by $\alpha_{\varphi}(\cU)=\varphi(\cU)$, which in turn induces a group isomorphism $\alpha^U_{\varphi}:U(\Gamma)\rightarrow U(\Gamma')$, such that the diagram
\[
\xymatrix{
\Gamma\ar[r]^{\delta_\Gamma^U}\ar[d]_{\alpha_{\varphi}} & U(\Gamma)\ar[d]^{\alpha^U_{\varphi}}\\
\Gamma'\ar[r]^{\delta_{\Gamma'}^{U}} & U(\Gamma')
}
\]
is commutative.
\end{proposition}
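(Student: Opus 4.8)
The plan is to unwind the definitions and check that the three claimed maps exist and make the square commute. First I would observe that an equivalence $\varphi:(\cA,\Gamma)\to(\cA',\Gamma')$ is, by definition, an algebra isomorphism carrying each $\cU\in\Gamma$ to a homogeneous component $\varphi(\cU)\in\Gamma'$; since $\varphi$ is bijective and $\cA'=\bigoplus_{\cU\in\Gamma}\varphi(\cU)$, the assignment $\cU\mapsto\varphi(\cU)$ is a well-defined injection $\Gamma\to\Gamma'$, and it is surjective because distinct components of $\Gamma'$ cannot have the same preimage (again by injectivity of $\varphi$) and their $\varphi$-preimages exhaust $\cA'$. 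Hence $\alpha_\varphi:\Gamma\to\Gamma'$ is a bijection.

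Next I would produce $\alpha_\varphi^U$. The key point is that $\alpha_\varphi$ respects the multiplicative incidence relation used to define the universal group: for $\cU,\cV,\cW\in\Gamma$ with $0\neq\cU\cV\subseteq\cW$, applying the algebra isomorphism $\varphi$ gives $0\neq\varphi(\cU)\varphi(\cV)=\varphi(\cU\cV)\subseteq\varphi(\cW)$, which is exactly the defining relation for $U(\Gamma')$ on the triple $\bigl(\alpha_\varphi(\cU),\alpha_\varphi(\cV),\alpha_\varphi(\cW)\bigr)$. Therefore the composition $\Gamma\xrightarrow{\alpha_\varphi}\Gamma'\xrightarrow{\delta_{\Gamma'}^U}U(\Gamma')$ sends each relator $\cU\cV\cW^{-1}$ of $U(\Gamma)$ to the identity, so by the universal property of $U(\Gamma)$ as a group presented on the set $\Gamma$ it factors through a unique group homomorphism $\alpha_\varphi^U:U(\Gamma)\to U(\Gamma')$ with $\alpha_\varphi^U\circ\delta_\Gamma^U=\delta_{\Gamma'}^U\circ\alpha_\varphi$; this is precisely the commutativity of the displayed square. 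Running the same construction for $\varphi^{-1}$ (which is also an equivalence, since $\varphi^{-1}(\cU')=\alpha_\varphi^{-1}(\cU')\in\Gamma$) yields $\alpha_{\varphi^{-1}}^U$, and by uniqueness in the universal property the two compositions $\alpha_{\varphi^{-1}}^U\circ\alpha_\varphi^U$ and $\alpha_\varphi^U\circ\alpha_{\varphi^{-1}}^U$ must be the respective identity maps (each fits the universal property square for the identity equivalence). Hence $\alpha_\varphi^U$ is a group isomorphism.

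There is no real obstacle here; the only thing requiring a moment's care is the bookkeeping of the universal property, namely that $U(\Gamma)$ is a quotient of the free abelian group on $\Gamma$ and that a set map $\Gamma\to H$ into an abelian group killing all the relators $\cU\cV\cW^{-1}$ extends uniquely to a group homomorphism $U(\Gamma)\to H$ — this is immediate from the presentation. Everything else is a direct diagram chase, which is why the statement is labelled a direct consequence of the definitions.
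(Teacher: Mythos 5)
Your proof is correct and is exactly the routine verification the paper has in mind when it omits the argument as ``a direct consequence of the definitions'': bijectivity of $\alpha_\varphi$ from the two direct-sum decompositions of $\cA'$, preservation of the relations $0\neq\cU\cV\subseteq\cW$ because $\varphi$ is an algebra isomorphism, the universal property of the presentation of $U(\Gamma)$ to get $\alpha^U_\varphi$ and the commuting square, and the same construction for $\varphi^{-1}$ plus uniqueness to get invertibility. Only a wording slip to fix: in the surjectivity argument you mean that the $\varphi$-\emph{images} of the components of $\Gamma$ exhaust $\cA'$ (so, by directness of the decomposition $\cA'=\bigoplus_{\cU'\in\Gamma'}\cU'$ and the fact that the elements of $\Gamma'$ are nonzero, no component of $\Gamma'$ can lie outside the image), not their preimages.
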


The universal group is strongly related to the group of \emph{diagonal automorphisms} in the affine group schemes sense.

Recall that an \emph{affine group scheme} over $\FF$ is a representable functor from the category $\Alg_\FF$ of 
unital associative commutative $\FF$-algebras (not necessarily of finite dimension) to the category of groups (see, for instance, \cite{Wh}, \cite[Chapter VI]{KMRT} or \cite[Appendix A]{EKmon}). 

For example, the automorphism group scheme $\AAut_\FF(\cA)$ of a finite-dimensional algebra $\cA$ is defined by 
$\AAut_\FF(\cA)(R)=\Aut_R(\cA\ot_\FF R)$ for every object $R$ in $\Alg_\FF$. (This  may be strictly larger than 
the affine group scheme corresponding to the algebraic group $\Aut_\FF(\cA)$.)

Given a finite-dimensional graded algebra $(\cA,\Gamma)$, the \emph{diagonal group scheme} of $\Gamma$, denoted by $\DDiag_\Gamma(\cA)$, is the subgroup scheme of $\AAut_\FF(\cA)$ whose $R$-points consist of the automorphisms of $\cA\otimes_\FF R$ which act on each $\cU\otimes_\FF R$, $\cU\in\Gamma$, by multiplication by a scalar $r_\cU\in R^\times$ ($R^\times$ denotes the group of invertible elements in $R$).
Thus, any $R$-point of $\DDiag_\Gamma(\cA)$ is determined by a collection of elements $r_\cU\in R^\times$, $\cU\in\Gamma$, that must satisfy $r_\cU r_\cV=r_\cW$ if $0\neq \cU\cV\subseteq \cW$. This proves the next result. (See \cite[\S 1.4]{EKmon}.)

Given any finitely generated abelian group $G$, its \emph{Cartier dual} $G^D$ is the affine group scheme represented by the group algebra $\FF G$. Its $R$-points consist of the algebra homomorphisms $\FF G\rightarrow R$, which can be identified with the group homomorphisms $G\rightarrow R^\times$.  The affine group schemes isomorphic to $G^D$ for a finitely generated abelian group are called  \emph{diagonalizable}.

\begin{theorem}\label{th:diag_universal}
Let $(\cA,\Gamma)$ be a finite-dimensional graded algebra. Then the affine group scheme $\DDiag_\Gamma(\cA)$ is naturally isomorphic to $U(\Gamma)^D$.
\end{theorem}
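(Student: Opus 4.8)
The plan is to identify the functor $\DDiag_\Gamma(\cA)$ explicitly and recognize it as (isomorphic to) the Cartier dual $U(\Gamma)^D$. First I would unwind the definition of $R$-points. As noted in the paragraph preceding the statement, an $R$-point of $\DDiag_\Gamma(\cA)$ is exactly a family $(r_\cU)_{\cU\in\Gamma}$ of elements of $R^\times$ subject to the constraint $r_\cU r_\cV = r_\cW$ whenever $0\neq \cU\cV\subseteq\cW$; conversely, any such family defines an automorphism of $\cA\ot_\FF R$ acting on $\cU\ot_\FF R$ by the scalar $r_\cU$, because the defining multiplication constraints of the grading are precisely the relations the $r_\cU$ satisfy, so this map respects products. (One checks it is an $R$-algebra automorphism: it is $R$-linear and bijective with inverse given by $(r_\cU^{-1})$, and it is multiplicative on homogeneous components, hence on all of $\cA\ot_\FF R$.)

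Next I would recall that, by definition, an $R$-point of $U(\Gamma)^D$ is an algebra homomorphism $\FF U(\Gamma)\to R$, equivalently a group homomorphism $\chi:U(\Gamma)\to R^\times$. Composing $\chi$ with $\delta_\Gamma^U:\Gamma\to U(\Gamma)$ gives a family $r_\cU\bydef \chi([\cU])\in R^\times$. Since $U(\Gamma)$ is generated by the classes $[\cU]$ subject exactly to the relations $[\cU][\cV][\cW]^{-1}=e$ whenever $0\neq\cU\cV\subseteq\cW$, the universal property of the presentation shows that assignments $\cU\mapsto r_\cU$ satisfying $r_\cU r_\cV = r_\cW$ under that same condition are in natural bijection with group homomorphisms $U(\Gamma)\to R^\times$. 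Thus for every object $R$ of $\Alg_\FF$ we obtain a bijection
\[
\DDiag_\Gamma(\cA)(R)\ \longleftrightarrow\ \Hom_{\mathrm{Grp}}\bigl(U(\Gamma),R^\times\bigr)\ =\ U(\Gamma)^D(R),
\]
and both sides are groups under pointwise multiplication of the families $(r_\cU)$, so the bijection is a group isomorphism.

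Finally I would check naturality in $R$: given a homomorphism $R\to R'$ in $\Alg_\FF$, the induced maps on both functors are given by applying $R^\times\to R'^\times$ entrywise to the tuple $(r_\cU)$ (equivalently, postcomposing $\chi$), so the square commutes on the nose. By Yoneda this natural isomorphism of functors is an isomorphism of affine group schemes; note in particular that $\DDiag_\Gamma(\cA)$ is thereby seen to be representable (by $\FF U(\Gamma)$), which is part of what must be verified since $U(\Gamma)$ is finitely generated when $\cA$ is finite-dimensional (it has a generating set of size $|\Gamma|<\infty$). The only genuinely delicate point is the very first one: verifying carefully that an arbitrary family $(r_\cU)$ satisfying the grading relations really does define an algebra automorphism of $\cA\ot_\FF R$ — i.e. that the relations $r_\cU r_\cV = r_\cW$ for all triples with $0\neq\cU\cV\subseteq\cW$ are exactly what is needed for multiplicativity, with no further compatibility required — but this is immediate from the bilinearity of the product and the decomposition $\cA\ot_\FF R=\bigoplus_{\cU\in\Gamma}\cU\ot_\FF R$. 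Everything else is a formal consequence of the presentation of $U(\Gamma)$ and the definition of the Cartier dual.
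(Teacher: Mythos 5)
Your proposal is correct and follows essentially the same route as the paper: the paragraph preceding the theorem already identifies the $R$-points of $\DDiag_\Gamma(\cA)$ with families $(r_\cU)_{\cU\in\Gamma}$ in $R^\times$ satisfying $r_\cU r_\cV=r_\cW$ whenever $0\neq\cU\cV\subseteq\cW$, and the paper then invokes the presentation of $U(\Gamma)$ (via the reference to \cite[\S 1.4]{EKmon}) exactly as you do to match these families with group homomorphisms $U(\Gamma)\to R^\times$, i.e.\ with $R$-points of $U(\Gamma)^D$. Your write-up simply makes explicit the verification of multiplicativity, naturality in $R$, and representability that the paper leaves to the cited reference.
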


\medskip

\subsection{Group-gradings}\label{ss:group_gradings}
We are mainly interested in gradings by abelian groups:

\begin{df}\label{df:G_grading}
Given an abelian group $G$, a $G$-grading on an algebra $\cA$ is a triple $(\Gamma,G,\delta)$, where $\Gamma$ is a grading on $\cA$, and $\delta:\Gamma\rightarrow G$ is a one-to-one map, such that for any $\cU,\cV,\cW\in\Gamma$ such that $0\neq \cU\cV\subseteq\cW$, $\delta(\cU)\delta(\cV)=\delta(\cW)$.
\end{df}

Given a $G$-grading $(\Gamma,G,\delta)$, define $\cA_g=\cU$ if $\delta(\cU)=g$, and define  $\cA_g=0$ if $g$ is not in the range of $\delta$. Then $\cA=\bigoplus_{g\in G}\cA_g$, and we recover the usual expression for a $G$-grading. The map $\delta$ is called the \emph{degree map}. For $\cA_g\neq 0$, $\delta(\cA_g)$ is simply $g$. We also write $\deg(x)=g$ for any $0\neq x\in \cA_g$.

As shown in \cite[Proposition 1.36]{EKmon}, a $G$-grading on an algebra $\cA$ corresponds to a homomorphism of affine group schemes $G^D\rightarrow \AAut_\FF(\cA)$. For every object $R$ in $\Alg_\FF$, the image of a group homomorphism $f:G\rightarrow R^\times$ (i.e., an $R$-point of the Cartier dual $G^D$), is the automorphism of $\cA\otimes_\FF R$ such that $x_g\otimes r\mapsto x_g\otimes f(g)r$, for any $x_g\in \cA_g$ and $r\in R$.

As for general gradings, there are several natural related notions in
 the situation of Definition \ref{df:G_grading}: 
 
\begin{itemize}

\item The $4$-tuple $(\cA,\Gamma,G,\delta)$ is said to be a \emph{$G$-graded algebra}. If the other components are clear from the context, we may refer simply to a $G$-graded algebra $\cA$.

\item The range of $\delta$ is the subset $\supp_G(\Gamma)\bydef\{g\in G: \cA_g\neq 0\}$, which is called the \emph{support} of the $G$-grading. Thus $\Gamma=\{ \cA_g\mid g\in\supp_G(\Gamma)\}$.

\item Given a $G$-graded algebra $(\cA,\Gamma,G,\delta)$, any graded subalgebra $\cS$ of the graded algebra $(\cA,\Gamma)$ gives rise to the $G$-graded algebra $(\cS,\Gamma\vert_\cS,G,\delta\vert_\cS)$,  $\Gamma\vert_\cS$ as in \eqref{eq:Gamma_S}, with $\delta\vert_\cS(\cU\cap\cS)=\delta(\cU)$. When referring to a $G$-graded subalgebra of $(\cA,\Gamma,G,\delta)$ we will mean a graded subalgebra $\cS$ of $(\cA,\Gamma)$, endowed with the $G$-grading above. The same applies to $G$-graded ideals.

\item Given two $G$-graded algebras $(\cA,\Gamma,G,\delta)$ and $(\cA',\Gamma',G,\delta')$, an \emph{isomorphism} $\varphi:(\cA,\Gamma,G,\delta)\rightarrow (\cA',\Gamma',G,\delta')$ is an isomorphism $\varphi:\cA\rightarrow \cA'$ such that $\varphi(\cA_g)=\cA'_g$ for each $g\in G$.

\item Given a $G$-graded algebra $(\cA,\Gamma,G,\delta)$ and an abelian group $H$, any group homomorphism $\beta:G\rightarrow H$ defines an $H$-grading $(\cA,\Gamma',H,\delta')$ by $\cA=\bigoplus_{h\in H}\cA'_h$, with $\cA'_h\bydef \bigoplus_{\beta(g)=h}\cA_g$ for any $h\in H$. The new grading $\Gamma'$ is a coarsening of $\Gamma$. If $\pi:\Gamma\rightarrow \Gamma'$ is the corresponding surjection, then the diagram
\[
\xymatrix{
\Gamma\ar[r]^{\delta}\ar[d]_{\pi} & G\ar[d]^{\beta}\\
\Gamma'\ar[r]^{\delta'} & H
}
\]
is commutative.

In this case, the grading $(\Gamma',H,\delta')$ is said to be the \emph{coarsening of $(\Gamma,G,\delta)$ induced by $\beta$}.

\end{itemize}

\begin{df}\label{df:group-grading}
A grading $\Gamma$ on an algebra $\cA$ is called a \emph{group-grading} if there is an abelian group $G$ and a $G$-grading of the form $(\Gamma,G,\delta)$ (i.e., the first component of the $G$-grading is $\Gamma$).
\end{df}

In this situation, we say that $\Gamma$ can be realized as a $G$-grading, or by the $G$-grading $(\Gamma,G,\delta)$, and we will talk about the group-graded algebra $(\cA,\Gamma)$.

A group-grading $\Gamma$ on an algebra $\cA$ is said to be a \emph{fine group-grading} if it admits no proper refinements in the class of group gradings. Any group-grading on a finite-dimensional algebra is a coarsening of a fine group-grading.

\begin{remark}\label{rk:fine_group_grading}
If $\chr\FF=2$, then the algebra $\FF\times\FF$  admits a unique group-grading: the trivial one. Thus the trivial grading is a fine group-grading, but it is not a fine grading, because the grading considered in Example \ref{ex:FxF} is finer. Note that $\AAut_\FF(\FF\times\FF)=\mathsf{C}_2$, the constant group scheme corresponding to the cyclic group of order $2$: $C_2$, which is not diagonalizable because $\chr\FF=2$.

 (Note that if $\chr\FF\neq 2$, the trivial grading can be refined to the grading by $\ZZ/2$ with $(\FF\times\FF)\subo=\FF(1,1)$ and $(\FF\times\FF)\subuno=\FF(1,-1)$.) 
\end{remark}

The next result, whose proof is straightforward, characterizes group-gradings and explains the adjective \emph{universal} in the definition of the universal group:

\begin{theorem}\label{th:universal}
Let $(\cA,\Gamma)$ be a graded algebra, with universal group $\bigl(U(\Gamma),\delta_\Gamma^U\bigr)$. Then $\Gamma$ is a group-grading if and only if $\delta_\Gamma^U$ is one-to-one. In this case $(\cA,\Gamma,U(\Gamma),\delta_\Gamma^U)$ is a $U(\Gamma)$-graded algebra.

Moreover, if $\Gamma$ can be realized by the $G$-grading $(\Gamma,G,\delta)$, then there is a unique group homomorphism $\varphi:U(\Gamma)\rightarrow G$, such that the diagram
\[
\xymatrix{
&& U(\Gamma)\ar[dd]^{\varphi}\\
\Gamma\ar[rru]^-{\delta_\Gamma^U}\ar[rrd]^-{\delta} &\\
&& G
}
\]
is commutative. (In other words, there is a unique homomorphism $\bigl(U(\Gamma),\delta_\Gamma^U)\rightarrow (G,\delta)$.)
\end{theorem}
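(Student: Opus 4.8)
The plan is to unwind the definition of the universal group $U(\Gamma)$ as a universal object and then verify the claimed properties one by one; everything here is a direct consequence of how $U(\Gamma)$ was constructed as a presented abelian group, so the proof is really a matter of organizing the verifications correctly.

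First I would address the characterization of group-gradings. If $\delta_\Gamma^U$ is one-to-one, then by the defining relations of $U(\Gamma)$ the triple $(\Gamma, U(\Gamma), \delta_\Gamma^U)$ satisfies the conditions of Definition \ref{df:G_grading}: indeed, the relation $\cU\cV\cW^{-1}=e$ holds in $U(\Gamma)$ precisely when $0\neq\cU\cV\subseteq\cW$, which is exactly the compatibility required of a degree map, and injectivity of $\delta_\Gamma^U$ is the remaining requirement. Hence $\Gamma$ is a group-grading realized by this $U(\Gamma)$-grading, and setting $\cA_g=\cU$ when $\delta_\Gamma^U(\cU)=g$ and $\cA_g=0$ otherwise gives the $U(\Gamma)$-graded algebra. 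Conversely, suppose $\Gamma$ is realized by some $G$-grading $(\Gamma,G,\delta)$. Since $\delta$ sends the generators $\Gamma$ of $U(\Gamma)$ into the abelian group $G$ and, by Definition \ref{df:G_grading}, kills all the defining relators $\cU\cV\cW^{-1}$ (because $\delta(\cU)\delta(\cV)=\delta(\cW)$ whenever $0\neq\cU\cV\subseteq\cW$), the universal property of a presented group yields a unique group homomorphism $\varphi:U(\Gamma)\rightarrow G$ with $\varphi\circ\delta_\Gamma^U=\delta$. This homomorphism $\varphi$ is simultaneously what gives the last assertion of the theorem, so I would prove it once here. Since $\delta$ is one-to-one (it is a degree map of a $G$-grading) and $\delta=\varphi\circ\delta_\Gamma^U$, the map $\delta_\Gamma^U$ must be one-to-one as well, completing the equivalence.

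For the ``moreover'' part I would simply point back to the homomorphism $\varphi$ just constructed: existence comes from the universal property of the presentation $U(\Gamma)=\langle\Gamma\mid \cU\cV\cW^{-1}=e\ \text{if}\ 0\neq\cU\cV\subseteq\cW\rangle$ applied to the map $\delta:\Gamma\rightarrow G$ (which is well-defined on the free abelian group on $\Gamma$ and annihilates each relator), and uniqueness follows because $\delta_\Gamma^U(\Gamma)$ generates $U(\Gamma)$, so any homomorphism out of $U(\Gamma)$ is determined by its values on the classes $[\cU]$, which are forced by commutativity of the diagram. Rephrasing, $\varphi$ is the unique homomorphism of pairs $\bigl(U(\Gamma),\delta_\Gamma^U\bigr)\rightarrow(G,\delta)$ in the sense of the homomorphism notion between pairs introduced just after Definition \ref{df:universal_group}.

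I do not anticipate a genuine obstacle: the only subtlety worth being careful about is the logical structure — noting that $G$ (and hence $H$ in coarsening arguments) is abelian so that the free abelian group on $\Gamma$, rather than the free group, is the correct universal object, and making sure the two directions of the iff and the ``moreover'' clause all rest on the single homomorphism $\varphi$ rather than being proved independently. The mild point to flag is that injectivity of $\delta$ in a $G$-grading is built into Definition \ref{df:G_grading}, which is what forces $\delta_\Gamma^U$ to be injective in the converse direction; a reader might otherwise expect this to require an argument.
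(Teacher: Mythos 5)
Your argument is correct and is exactly the straightforward verification the paper has in mind (the paper states Theorem \ref{th:universal} without proof, calling it straightforward): the defining relations of the presentation of $U(\Gamma)$ make $\bigl(\Gamma,U(\Gamma),\delta_\Gamma^U\bigr)$ a $U(\Gamma)$-grading once $\delta_\Gamma^U$ is injective, and in the converse direction the universal property of the presented abelian group gives the unique $\varphi$ with $\varphi\circ\delta_\Gamma^U=\delta$, whence injectivity of $\delta$ (built into Definition \ref{df:G_grading}) forces injectivity of $\delta_\Gamma^U$, with uniqueness of $\varphi$ coming from the fact that $\delta_\Gamma^U(\Gamma)$ generates $U(\Gamma)$. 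The only wording to adjust is the claim that $\cU\cV\cW^{-1}=e$ holds in $U(\Gamma)$ \emph{precisely} when $0\neq\cU\cV\subseteq\cW$ (coincidences among the classes could produce further relations), but you only ever use the true direction, so nothing in the proof is affected.
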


We end this subsection with a trivial but useful remark:

\begin{proposition}\label{pr:equivalence_iso}
Let $(\cA^i,\Gamma^i)$, $i=1,2$, be two group-graded algebras with universal groups $\bigl(U(\Gamma^i),\delta_{\Gamma^i}^U\bigr)$, and let $\varphi:(\cA^1,\Gamma^1)\rightarrow (\cA^2,\Gamma^2)$ be an equivalence. Then with $U=U(\Gamma^2)$ and $\delta^1:\Gamma^1\rightarrow U$ given by $\delta^1(\cU)=\delta_{\Gamma^2}^U\bigl(\varphi(\cU)\bigr)$ for $\cU \in \Gamma^1$, the equivalence $\varphi$ becomes an isomorphism $\varphi:(\cA^1,\Gamma^1,U,\delta^1)\rightarrow (\cA^2,\Gamma^2,U,\delta_{\Gamma^2}^U)$ of $U$-graded algebras.
\end{proposition}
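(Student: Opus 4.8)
The plan is to unwind the definitions and check that everything is forced. First I would note that since $\varphi\colon(\cA^1,\Gamma^1)\to(\cA^2,\Gamma^2)$ is an equivalence, by Proposition \ref{pr:equivalence} it induces a bijection $\alpha_\varphi\colon\Gamma^1\to\Gamma^2$, $\cU\mapsto\varphi(\cU)$. Composing with the universal degree map of $\Gamma^2$ gives precisely the map $\delta^1=\delta_{\Gamma^2}^U\circ\alpha_\varphi$ in the statement, so $\delta^1$ is one-to-one (being a composite of a bijection with the injective map $\delta_{\Gamma^2}^U$, which is injective by Theorem \ref{th:universal} since $\Gamma^2$ is a group-grading).

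Next I would verify that $(\Gamma^1,U,\delta^1)$ is indeed a $U$-grading on $\cA^1$ in the sense of Definition \ref{df:G_grading}: we already have injectivity of $\delta^1$, so it remains to check the multiplicativity condition. Suppose $\cU,\cV,\cW\in\Gamma^1$ with $0\neq\cU\cV\subseteq\cW$. Applying the isomorphism $\varphi$ yields $0\neq\varphi(\cU)\varphi(\cV)=\varphi(\cU\cV)\subseteq\varphi(\cW)$, and $\varphi(\cU),\varphi(\cV),\varphi(\cW)\in\Gamma^2$. Since $\Gamma^2$ is realized by the $U$-grading $(\Gamma^2,U,\delta_{\Gamma^2}^U)$, the defining relation gives $\delta_{\Gamma^2}^U(\varphi(\cU))\,\delta_{\Gamma^2}^U(\varphi(\cV))=\delta_{\Gamma^2}^U(\varphi(\cW))$, i.e.\ $\delta^1(\cU)\delta^1(\cV)=\delta^1(\cW)$, as required.

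Finally I would identify the homogeneous components and conclude. By construction, for $g\in\supp_U(\Gamma^1)$ the component $\cA^1_g$ (with respect to $\delta^1$) is the unique $\cU\in\Gamma^1$ with $\delta_{\Gamma^2}^U(\varphi(\cU))=g$, which is exactly $\varphi^{-1}(\cA^2_g)$; and for $g$ outside the support both $\cA^1_g$ and $\cA^2_g$ are zero. Hence $\varphi(\cA^1_g)=\cA^2_g$ for every $g\in U$, which is precisely the definition of an isomorphism of $U$-graded algebras $\varphi\colon(\cA^1,\Gamma^1,U,\delta^1)\to(\cA^2,\Gamma^2,U,\delta_{\Gamma^2}^U)$.

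There is essentially no obstacle here: the statement is a bookkeeping consequence of the definitions, and the only point requiring a moment's care is making sure that $\delta^1$ is well defined and one-to-one, which follows from $\alpha_\varphi$ being a bijection and $\delta_{\Gamma^2}^U$ being injective. One could equivalently phrase the whole argument at the level of Cartier duals and $\DDiag_{\Gamma}$ via Theorem \ref{th:diag_universal}, but the direct verification above is shortest.
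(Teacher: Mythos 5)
Your verification is correct and matches the paper's point of view: the paper states this as a ``trivial but useful remark'' with no written proof, and your argument (injectivity of $\delta^1$ via $\alpha_\varphi$ and $\delta_{\Gamma^2}^U$, multiplicativity transported through the isomorphism $\varphi$, and the identification $\varphi(\cA^1_g)=\cA^2_g$) is exactly the direct definition-unwinding the authors have in mind. No gaps; nothing further is needed.
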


\medskip

\subsection{The group grading induced by a grading}

Given any grading $\Gamma$, there is always a natural group-grading attached to it.

\begin{df}\label{df:group_grading_induced}
Let $\Gamma$ be a grading on the algebra $\cA$, and let $\bigl(U(\Gamma),\delta_\Gamma^U\bigr)$ be its universal group. The coarsening $\Gamma_{\mathrm{gr}}$ defined by
\[
\Gamma_{\mathrm{gr}}\bydef\left\{\sum_{\delta_\Gamma^U(\cU)=u}\cU\quad \Big\vert\quad  u\in \delta_\Gamma^U(\Gamma)\right\}
\]
is called the \emph{group-grading induced by} $\Gamma$. The grading $\Gamma_{\mathrm{gr}}$ can be realized by the $U(\Gamma)$-grading $\bigl(\Gamma_{\mathrm{gr}},U(\Gamma),\delta_{\Gamma_{\mathrm{gr}}}^U\bigr)$, where 
\[
\delta_{\Gamma_{\mathrm{gr}}}^U\Bigl(\sum_{\delta_\Gamma^U(\cU)=u}\cU\Bigr)=u
\]
for any $u\in \delta_\Gamma^U(\Gamma)$.
\end{df}

Theorem \ref{th:universal} implies our next result:

\begin{theorem}\label{th:grading_group-grading}
Let $\Gamma$ be a grading on an algebra $\cA$ with universal group $\bigl(U(\Gamma),\delta_\Gamma^U\bigr)$. 
\begin{itemize} 
\item $\Gamma$ is a group-grading if and only if $\Gamma=\Gamma_{\mathrm{gr}}$.
(Hence $\delta_\Gamma^U=\delta_{\Gamma_{\mathrm{gr}}}^U$.)

\item For any abelian group $G$, and any $G$-grading $(\Gamma',G,\delta')$ such that $\Gamma'$ is a coarsening of $\Gamma$, there is a unique group homomorphism $\beta:U(\Gamma)\rightarrow G$ such that $(\Gamma',G,\delta')$ is the coarsening of $(\Gamma_{\mathrm{gr}},U(\Gamma),\delta_{\Gamma_{\mathrm{gr}}}^U)$ induced by $\beta$. In particular, $(U(\Gamma),\delta_{\Gamma_{\mathrm{gr}}}^U)$ is, up to isomorphism, the universal group of $\Gamma_{\mathrm{gr}}$.
\end{itemize}
\end{theorem}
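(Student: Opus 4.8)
The plan is to derive everything from Theorem \ref{th:universal} together with the definition of $\Gamma_{\mathrm{gr}}$, so that essentially no new construction is needed.

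\textbf{First statement.} If $\Gamma=\Gamma_{\mathrm{gr}}$, then since $\Gamma_{\mathrm{gr}}$ is by construction realized by the $U(\Gamma)$-grading $(\Gamma_{\mathrm{gr}},U(\Gamma),\delta_{\Gamma_{\mathrm{gr}}}^U)$, the grading $\Gamma$ is a group-grading. Conversely, suppose $\Gamma$ is a group-grading. By Theorem \ref{th:universal}, the map $\delta_\Gamma^U$ is one-to-one; hence for each $u\in\delta_\Gamma^U(\Gamma)$ there is exactly one $\cU\in\Gamma$ with $\delta_\Gamma^U(\cU)=u$, so the sum $\sum_{\delta_\Gamma^U(\cU)=u}\cU$ collapses to a single $\cU$. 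Thus $\Gamma_{\mathrm{gr}}=\Gamma$ as sets of subspaces. The parenthetical $\delta_\Gamma^U=\delta_{\Gamma_{\mathrm{gr}}}^U$ is then immediate from the formula defining $\delta_{\Gamma_{\mathrm{gr}}}^U$ in Definition \ref{df:group_grading_induced}.

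\textbf{Second statement.} Let $(\Gamma',G,\delta')$ be a $G$-grading with $\Gamma'$ a coarsening of $\Gamma$. The coarsening map $\pi:\Gamma\to\Gamma'$ composed with $\delta'$ gives a map $\delta'\circ\pi:\Gamma\to G$; one checks that $(G,\delta'\circ\pi)$ is a homomorphic image of $(U(\Gamma),\delta_\Gamma^U)$, i.e.\ whenever $0\neq\cU\cV\subseteq\cW$ in $\Gamma$, the images of $\cU,\cV,\cW$ in $\Gamma'$ satisfy the analogous containment (because $\cU\cV\subseteq\cW$ forces $\pi(\cU)\pi(\cV)\subseteq\pi(\cW)$, the homogeneous components of $\cA$ in the grading $\Gamma'$ being sums of those in $\Gamma$), so $(\delta'\pi)(\cU)(\delta'\pi)(\cV)=(\delta'\pi)(\cW)$. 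By the universal property of $U(\Gamma)$ there is a unique group homomorphism $\beta:U(\Gamma)\to G$ with $\beta\circ\delta_\Gamma^U=\delta'\circ\pi$. It remains to see that $(\Gamma',G,\delta')$ is precisely the coarsening of $(\Gamma_{\mathrm{gr}},U(\Gamma),\delta_{\Gamma_{\mathrm{gr}}}^U)$ induced by $\beta$: for $h\in G$, the $\beta$-induced component is $\bigoplus_{\beta(u)=h}\bigl(\Gamma_{\mathrm{gr}}\bigr)_u=\bigoplus_{\beta(u)=h}\ \sum_{\delta_\Gamma^U(\cU)=u}\cU=\sum_{(\delta'\pi)(\cU)=h}\cU$, which is exactly $\cA'_h=\bigoplus_{\cU':\,\delta'(\cU')=h}\cU'$ since $\cU'=\sum_{\pi(\cU)=\cU'}\cU$. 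Uniqueness of $\beta$ with this property follows from its uniqueness as a homomorphism $(U(\Gamma),\delta_\Gamma^U)\to(G,\delta'\pi)$, because any $\beta$ inducing $(\Gamma',G,\delta')$ must satisfy $\beta\circ\delta_\Gamma^U=\delta'\circ\pi$.

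\textbf{The ``in particular''.} Apply the just-proved universal property with $\Gamma'$ ranging over coarsenings of $\Gamma_{\mathrm{gr}}$, which are the same as coarsenings of $\Gamma$ that are group-gradings; this shows $(U(\Gamma),\delta_{\Gamma_{\mathrm{gr}}}^U)$ enjoys the defining universal property of the universal group of $\Gamma_{\mathrm{gr}}$, hence is isomorphic to it. I expect the only mildly delicate point to be the bookkeeping identifying the $\beta$-induced coarsening of $\Gamma_{\mathrm{gr}}$ with $(\Gamma',G,\delta')$ on the nose — matching the double sum $\bigoplus_{\beta(u)=h}\sum_{\delta_\Gamma^U(\cU)=u}\cU$ with $\cA'_h$ — but this is just regrouping a direct sum and carries no real difficulty; everything else is a direct invocation of Theorem \ref{th:universal}.
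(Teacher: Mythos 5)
Your proposal is correct and follows essentially the same route as the paper: the first bullet is handled via the injectivity of $\delta_\Gamma^U$ from Theorem \ref{th:universal}, and the homomorphism $\beta$ is obtained by checking that $\delta'\circ\pi$ respects the defining relations of $U(\Gamma)$ so that it factors through the presentation, exactly as in the paper's (terser) argument. Your additional bookkeeping — identifying the $\beta$-induced coarsening with $(\Gamma',G,\delta')$, the uniqueness of $\beta$, and the universal-property argument for the ``in particular'' — only fills in details the paper leaves implicit.
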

\begin{proof} 
The first part is clear. As for the second part, since $\Gamma'$ is a coarsening of $\Gamma$, there is the attached surjective map $\pi:\Gamma\rightarrow \Gamma'$. The composite map $\delta'\circ \pi:\Gamma\rightarrow G$ induces a unique group homomorphism from the free abelian group generated by $\Gamma$ into $G$, which factors through $U(\Gamma)$, thus giving the desired group homomorphism $\beta:U(\Gamma)\rightarrow G$.
\end{proof}

 In particular, the grading in Example \ref{ex:FxF} is not a group-grading, because $\Gamma_{\mathrm{gr}}=\{\cA\}$.

\begin{example}\label{ex:non-semigroup.grading}
Consider the special linear Lie algebra of degree $2$:
\[ 
\frsl_2 = \left\langle  E= \left(\begin{smallmatrix}
0 & 1 \\
0 & 0 
\end{smallmatrix}\right),
F= \left(\begin{smallmatrix}
0 & 0 \\
1 & 0 
\end{smallmatrix}\right),
H= \allowbreak \left(\begin{smallmatrix}
1 & 0 \\
0 & -1 
\end{smallmatrix}\right) \right\rangle,
\] 
over a ground field $\FF$ of characteristic not $2$. This is a simple Lie algebra, its bracket is determined by: 
\begin{equation}\label{eq:EFH}
[E,F]= H,\quad [H,F]=-2F,\quad\text{and}\quad [H,E]=2E.
\end{equation}
On the cartesian product $\cL=\frsl_2\times\frsl_2$ of two copies of $\frsl_2$, consider the following grading introduced in \cite{Eld09}:
\[
\Gamma=\lbrace \FF H \times \FF H,\; \FF E \times 0,\; \FF F \times 0,\; 0 \times \FF (E+F),\; 0 \times \FF (E-F)\rbrace.
\]
It is straightforward to check that it is a grading, but not a group-grading. 

Then the universal group $U(\Gamma)$ is generated by $\alpha\bydef\FF H\times\FF H$, $\beta=\FF E \times 0$, $\gamma=\FF F \times 0$, $\delta=0 \times \FF (E+F)$, and $\epsilon= 0 \times \FF (E-F)$, subject to the following conditions obtained from \eqref{eq:EFH}:
\[
\alpha\beta=\beta,\quad \alpha\gamma=\gamma,\quad \alpha\delta=\epsilon,\quad \alpha\epsilon=\delta,\quad \beta\gamma=\alpha,\quad \delta\epsilon=\alpha,
\]
so that $\alpha=e$ (the neutral element), $\beta=\gamma^{-1}$ and $\epsilon=\delta=\delta^{-1}$. Therefore $U(\Gamma)$ is generated by (the classes of) $\beta$ and $\delta$ and it is isomorphic to $\ZZ\times \left(\ZZ/2\right)$. Moreover, the induced group-grading is
\[
\Gamma_{\mathrm{gr}}=\left\{ \cL_e=\FF H\times\FF H,\; \cL_\beta=\FF E\times 0,\; \cL_{\beta^{-1}}=\FF F\times 0,\; \cL_\delta=0\times (\FF E\oplus\FF F)\right\}.
\]
Later on we will come back to this semisimple algebra $\cL=\frsl_2\times\frsl_2$.\qquad\qed
\end{example}

Equivalence of gradings is clearly inherited by the induced group-gradings:

\begin{theorem}\label{th:equivalence_groupgradings}
If $\varphi:(\cA,\Gamma)\rightarrow (\cA',\Gamma')$ is an equivalence of graded algebras, then $\varphi$ is also an equivalence $(\cA,\Gamma_{\mathrm{gr}})\rightarrow (\cA',\Gamma'_{\mathrm{gr}})$.
\end{theorem}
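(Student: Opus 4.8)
The plan is to show that $\varphi$ maps each homogeneous component of $\Gamma_{\mathrm{gr}}$ onto a homogeneous component of $\Gamma'_{\mathrm{gr}}$, which is precisely the requirement for $\varphi$ to be an equivalence of the two group-graded algebras. First I would invoke Proposition \ref{pr:equivalence}: since $\varphi:(\cA,\Gamma)\to(\cA',\Gamma')$ is an equivalence, it induces a bijection $\alpha_\varphi:\Gamma\to\Gamma'$, $\cU\mapsto\varphi(\cU)$, and in turn a group isomorphism $\alpha^U_\varphi:U(\Gamma)\to U(\Gamma')$ with $\alpha^U_\varphi\circ\delta^U_\Gamma=\delta^U_{\Gamma'}\circ\alpha_\varphi$.

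Next I would use this commuting square to compare fibres of the universal degree maps. Fix $u\in\delta^U_\Gamma(\Gamma)$, so that the corresponding component of $\Gamma_{\mathrm{gr}}$ is $\cA_u=\sum_{\delta^U_\Gamma(\cU)=u}\cU$. Because $\alpha^U_\varphi$ is a bijection, the set $\{\cU'\in\Gamma':\delta^U_{\Gamma'}(\cU')=\alpha^U_\varphi(u)\}$ equals $\{\varphi(\cU):\cU\in\Gamma,\ \delta^U_\Gamma(\cU)=u\}$: indeed $\delta^U_{\Gamma'}(\varphi(\cU))=\alpha^U_\varphi(\delta^U_\Gamma(\cU))$, and conversely every element of $\Gamma'$ is $\varphi(\cU)$ for a unique $\cU\in\Gamma$, with $\delta^U_{\Gamma'}(\varphi(\cU))=\alpha^U_\varphi(u)$ forcing $\delta^U_\Gamma(\cU)=u$ by injectivity of $\alpha^U_\varphi$. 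Applying the (linear) map $\varphi$ and using $\cA'=\varphi(\cA)$ termwise, we get
\[
\varphi(\cA_u)=\varphi\Bigl(\sum_{\delta^U_\Gamma(\cU)=u}\cU\Bigr)=\sum_{\delta^U_\Gamma(\cU)=u}\varphi(\cU)=\sum_{\delta^U_{\Gamma'}(\cU')=\alpha^U_\varphi(u)}\cU'=\cA'_{\alpha^U_\varphi(u)},
\]
which is the homogeneous component of $\Gamma'_{\mathrm{gr}}$ indexed by $\alpha^U_\varphi(u)$. Since $\alpha^U_\varphi$ restricts to a bijection $\delta^U_\Gamma(\Gamma)\to\delta^U_{\Gamma'}(\Gamma')$ on supports, as $u$ ranges over $\delta^U_\Gamma(\Gamma)$ the images $\varphi(\cA_u)$ range over all of $\Gamma'_{\mathrm{gr}}$. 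Hence $\varphi$ carries $\Gamma_{\mathrm{gr}}$ to $\Gamma'_{\mathrm{gr}}$ and is an isomorphism $\cA\to\cA'$, so it is an equivalence $(\cA,\Gamma_{\mathrm{gr}})\to(\cA',\Gamma'_{\mathrm{gr}})$.

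I do not anticipate a genuine obstacle here; the statement is essentially a bookkeeping consequence of Proposition \ref{pr:equivalence} together with the definition of $\Gamma_{\mathrm{gr}}$ (Definition \ref{df:group_grading_induced}). The only point requiring a little care is the second displayed manipulation: one must note that $\varphi$ being a linear isomorphism lets it commute with the finite direct sums defining the components of $\Gamma_{\mathrm{gr}}$, and that the reindexing of the summation is legitimate precisely because $\alpha^U_\varphi$ is a bijection matching the fibre of $u$ under $\delta^U_\Gamma$ with the fibre of $\alpha^U_\varphi(u)$ under $\delta^U_{\Gamma'}$. Everything else is immediate from the diagram in Proposition \ref{pr:equivalence}.
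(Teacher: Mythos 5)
Your argument is correct and is exactly the bookkeeping the paper leaves implicit (the theorem is stated without proof as a clear consequence of Proposition \ref{pr:equivalence} and Definition \ref{df:group_grading_induced}): the commuting square identifies the fibres of $\delta^U_\Gamma$ and $\delta^U_{\Gamma'}$, so $\varphi$ carries each component of $\Gamma_{\mathrm{gr}}$ onto the component of $\Gamma'_{\mathrm{gr}}$ indexed by $\alpha^U_\varphi(u)$. Nothing further is needed.
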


\smallskip

\subsection{Product gradings} 
\null\quad

\begin{df}\label{df:product_grading}
Let $(\cA^i,\Gamma^i)$ be a graded $\FF$-algebra, $i=1,\ldots,n$. The grading on $\cA^1\times\cdots\times \cA^n$ given by:
\[
\Gamma^1\times\cdots\times\Gamma^n\bydef\bigcup_{i=1}^n\left\{0\times\cdots\times\cU\times\cdots\times 0\mid \cU\in\Gamma^i\right\}
\]
is called the \emph{product grading} of the $\Gamma^i$'s.
\end{df}

The universal group of the product grading is  easily seen to be
\begin{equation}\label{eq:product_grading}
\bigl(U(\Gamma^1\times\cdots\times\Gamma^n), \delta^U_{\Gamma^1\times\cdots\times\Gamma^n}\bigr),
\end{equation}
given by the following formulas:
\[
\begin{split}
&U(\Gamma^1\times\cdots\times\Gamma^n)=U(\Gamma^1)\times\cdots\times U(\Gamma^n),\\
&\delta^U_{\Gamma^1\times\cdots\times\Gamma^n}\bigl(0\times\cdots\times \cU\times\cdots\times 0)=
   \bigl(e,\cdots,\delta^U_{\Gamma^i}(\cU),\cdots,e\bigr)\ \forall\cU\in\Gamma^i,\ \forall i=1,\ldots,n.
\end{split}
\]

\begin{example}\label{ex:FxFbis}
The grading in Example \ref{ex:FxF}, which is not a group-grading, is the product grading of the trivial gradings on the two copies of $\FF$.
\end{example}

As the previous example shows, even if $\Gamma^1,\ldots,\Gamma^n$ are group-gradings, the product grading may fail to be so. Therefore we need a different definition of product grading for group-gradings.

\begin{df}\label{df:product_group_gradings}
Let $G^i$ be an abelian group and
let $(\cA^i,\Gamma^i,G^i,\delta^i)$ be a $G^i$-group-graded algebra, $i=1,\ldots,n$, then the \emph{product group-grading} $(\Gamma^1,G^1,\delta^1)\times\cdots\times (\Gamma^n,G^n,\delta^n)$ is the group-grading on $\cA^1\times\cdots\times\cA^n$ by the abelian group $G^1\times\cdots\times G^n$ with:
\[
\begin{split}
\bigl(\cA^1\times\cdots\times\cA^n\bigr)_{(e,\ldots,e)}&=\cA^1_e\times\cdots\times\cA^n_e,\\
\bigl(\cA^1\times\cdots\times\cA^n\bigr)_{(e,\ldots,g_i,\ldots,e)}&=0\times\cdots\times\cA^i_{g_i}\times\cdots\times 0,\ i=1,\ldots,n,\ e\neq g_i\in G^i\\
\bigl(\cA^1\times\cdots\times\cA^n\bigr)_{(g_1,\ldots,g_n)}&=0,\ \text{if there are at least two indices $1\leq i<j\leq n$}\\[-2pt]
&\hspace*{2.3in}\text{with $g_i\neq e\neq g_j$.}
\end{split}
\]
\end{df}

Our next result shows the naturality of this definition.

\begin{theorem}\label{th:product_group_grading}
Let $\Gamma^i$ be a group-grading on an algebra $\cA^i$, and let $\bigl(U(\Gamma^i),\delta^U_{\Gamma^i}\bigr)$ be its universal group, $i=1,\ldots,n$. Then the product group-grading
\[
\Bigl(\Gamma^1,U(\Gamma^1),\delta^U_{\Gamma^1}\Bigr)\times\cdots\times \Bigl(\Gamma^n,U(\Gamma^n),\delta^U_{\Gamma^n}\Bigr)
\]
coincides with the induced group-grading
\[
\Bigl((\Gamma^1\times\cdots\times\Gamma^n)_{\mathrm{gr}},U(\Gamma^1\times\cdots\times\Gamma^n),\delta^U_{(\Gamma^1\times\cdots\times\Gamma^n)_{\mathrm{gr}}}\Bigr).
\]
(That is, the group-grading induced from the product grading $\Gamma^1\times\cdots\times\Gamma^n$, with its universal grading group.)
\end{theorem}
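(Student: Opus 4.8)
The plan is to compute both group-gradings explicitly and observe that they have the same grading group, the same homogeneous components, and the same degree map. Write $\Gamma\bydef\Gamma^1\times\cdots\times\Gamma^n$. By \eqref{eq:product_grading}, its universal group is $U(\Gamma)=U(\Gamma^1)\times\cdots\times U(\Gamma^n)$, and $\delta^U_\Gamma$ sends the homogeneous component $0\times\cdots\times\cU\times\cdots\times 0$ with $\cU\in\Gamma^i$ in the $i$-th slot to $\bigl(e,\ldots,\delta^U_{\Gamma^i}(\cU),\ldots,e\bigr)$. Since each $\Gamma^i$ is a group-grading, Theorem \ref{th:universal} gives that $\delta^U_{\Gamma^i}$ is one-to-one; in particular $\cA^i_e$, the homogeneous component of $\Gamma^i$ of degree $e$ (with the convention $\cA^i_e=0$ if there is none), is the only component of $\Gamma^i$ mapped by $\delta^U_{\Gamma^i}$ to $e$.

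First I would unwind Definition \ref{df:group_grading_induced}: the components of $\Gamma_{\mathrm{gr}}$ are the sums $\sum_{\delta^U_\Gamma(\cU)=u}\cU$ for $u\in\delta^U_\Gamma(\Gamma)$. From the description of $\delta^U_\Gamma$ above and the injectivity of the $\delta^U_{\Gamma^i}$, the component $0\times\cdots\times\cU\times\cdots\times 0$ in slot $i$ and the component $0\times\cdots\times\cV\times\cdots\times 0$ in slot $j$ have the same $\delta^U_\Gamma$-image precisely when either $i=j$ and $\cU=\cV$, or $\cU=\cA^i_e$ and $\cV=\cA^j_e$. Hence $\Gamma_{\mathrm{gr}}$ is made up of the single component $\cA^1_e\times\cdots\times\cA^n_e$ at degree $(e,\ldots,e)$ (present exactly when some $\cA^i_e\ne 0$) together with the components $0\times\cdots\times\cA^i_{g_i}\times\cdots\times 0$ at degree $(e,\ldots,g_i,\ldots,e)$, for $e\ne g_i\in U(\Gamma^i)$ with $\cA^i_{g_i}\ne 0$ and $i=1,\ldots,n$.

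Then I would compare with Definition \ref{df:product_group_gradings} applied to the $G^i$-graded algebras $(\cA^i,\Gamma^i,U(\Gamma^i),\delta^U_{\Gamma^i})$: the product group-grading lives on the same group $U(\Gamma^1)\times\cdots\times U(\Gamma^n)$, its $(e,\ldots,e)$-component is exactly $\cA^1_e\times\cdots\times\cA^n_e$, its $(e,\ldots,g_i,\ldots,e)$-component (for $e\ne g_i$) is exactly $0\times\cdots\times\cA^i_{g_i}\times\cdots\times 0$, and every component with two or more nontrivial entries vanishes. This is verbatim the list obtained in the previous paragraph, with the same degree attached to each component, so the two $U(\Gamma)$-graded algebras coincide.

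There is no real difficulty beyond this bookkeeping; the one place where care is needed, and where Theorem \ref{th:universal} is genuinely used, is the amalgamation of the $n$ identity components $0\times\cdots\times\cA^i_e\times\cdots\times 0$ into the single component $\cA^1_e\times\cdots\times\cA^n_e$ of $\Gamma_{\mathrm{gr}}$. One should also keep track of the degenerate case in which some (or all) of the $\cA^i_e$ are zero, so that $(e,\ldots,e)$ need not belong to the support; since Definition \ref{df:group_grading_induced} only ranges over $u\in\delta^U_\Gamma(\Gamma)$, this is automatically consistent with Definition \ref{df:product_group_gradings}, where $(\cA^1\times\cdots\times\cA^n)_{(e,\ldots,e)}=\cA^1_e\times\cdots\times\cA^n_e$ is then simply zero.
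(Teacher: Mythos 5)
Your proof is correct and follows essentially the same route as the paper: identify $U(\Gamma^1\times\cdots\times\Gamma^n)$ with $U(\Gamma^1)\times\cdots\times U(\Gamma^n)$ via Equation \eqref{eq:product_grading} (together with Theorem \ref{th:grading_group-grading}) and then unwind Definitions \ref{df:group_grading_induced} and \ref{df:product_group_gradings}, the paper merely leaving this last bookkeeping step implicit. Your explicit use of the injectivity of the $\delta^U_{\Gamma^i}$ from Theorem \ref{th:universal} to amalgamate the identity components is exactly the point the paper's ``everything follows from the definition'' glosses over, so the write-up is a faithful, more detailed version of the same argument.
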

\begin{proof}
We already know (see Theorem \ref{th:grading_group-grading} and Equation \eqref{eq:product_grading}) that 
\[
U\bigl((\Gamma^1\times\cdots\times\Gamma^n)_{\mathrm{gr}}\bigr) =U(\Gamma^1\times\cdots\times \Gamma^n)
=U(\Gamma^1)\times\cdots\times U(\Gamma^n).
\]
Now everything follows from the definition of the group-grading induced by a grading.
\end{proof}

This result motivates our next definition:

\begin{df}\label{df:free_product_grading}
Let $\Gamma^i$ be a group-grading on an algebra $\cA^i$, $i=1,\ldots,n$. Then the group-grading $(\Gamma^1\times\cdots\times\Gamma^n)_{\mathrm{gr}}$ on $\cA^1\times\cdots\times\cA^n$ is called the \emph{free product group-grading} of the $\Gamma^i$'s.
\end{df}

\begin{corollary}\label{co:product_equivalence}
Let the group-graded algebras $(\cA^i,\Gamma^i)$ and $(\cB^i,\tilde\Gamma^i)$ be equivalent, for $i=1,\ldots,n$. Then so are the group-graded algebras $(\cA^1\times\cdots\times \cA^n,(\Gamma^1\times\cdots\times\Gamma^n)_{\mathrm{gr}})$ and $(\cB^1\times\cdots\times \cB^n,(\tilde\Gamma^1\times\cdots\times\tilde\Gamma^n)_{\mathrm{gr}})$.
\end{corollary}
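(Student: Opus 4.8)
The plan is to reduce the statement for $n$ factors to the case $n=1$, where it is trivial (an equivalence of graded algebras is in particular a bijection of their gradings and, by Proposition \ref{pr:equivalence} applied to the identity reading, respects the universal group structure). First I would fix, for each $i$, an equivalence $\varphi^i:(\cA^i,\Gamma^i)\rightarrow(\cB^i,\tilde\Gamma^i)$, i.e.\ an algebra isomorphism $\varphi^i:\cA^i\rightarrow\cB^i$ sending each homogeneous component $\cU\in\Gamma^i$ onto a homogeneous component of $\tilde\Gamma^i$. Then I would form the direct sum map $\varphi\bydef\varphi^1\times\cdots\times\varphi^n:\cA^1\times\cdots\times\cA^n\rightarrow\cB^1\times\cdots\times\cB^n$, which is an algebra isomorphism because the multiplication on a cartesian product is componentwise and each $\varphi^i$ is multiplicative.

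Next I would verify that $\varphi$ is an equivalence of the product gradings $(\cA^1\times\cdots\times\cA^n,\Gamma^1\times\cdots\times\Gamma^n)\rightarrow(\cB^1\times\cdots\times\cB^n,\tilde\Gamma^1\times\cdots\times\tilde\Gamma^n)$: the homogeneous components of $\Gamma^1\times\cdots\times\Gamma^n$ are precisely the subspaces $0\times\cdots\times\cU\times\cdots\times 0$ with $\cU\in\Gamma^i$ (Definition \ref{df:product_grading}), and $\varphi$ sends such a subspace to $0\times\cdots\times\varphi^i(\cU)\times\cdots\times 0$, which lies in $\tilde\Gamma^1\times\cdots\times\tilde\Gamma^n$ since $\varphi^i(\cU)\in\tilde\Gamma^i$. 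The final step is to invoke Theorem \ref{th:equivalence_groupgradings}: an equivalence of graded algebras is automatically an equivalence of the associated induced group-gradings. Applying this to $\varphi$ and recalling that $(\Gamma^1\times\cdots\times\Gamma^n)_{\mathrm{gr}}$ is by definition (Definition \ref{df:free_product_grading}) the group-grading induced by the product grading, and similarly for the $\tilde\Gamma^i$'s, gives exactly that $\varphi$ is an equivalence $(\cA^1\times\cdots\times\cA^n,(\Gamma^1\times\cdots\times\Gamma^n)_{\mathrm{gr}})\rightarrow(\cB^1\times\cdots\times\cB^n,(\tilde\Gamma^1\times\cdots\times\tilde\Gamma^n)_{\mathrm{gr}})$.

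There is essentially no obstacle here: the corollary is a formal consequence of the compatibility of direct sums with the product-grading construction plus Theorem \ref{th:equivalence_groupgradings}. The only point requiring a line of care is checking that the componentwise map $\varphi$ genuinely sends homogeneous components of the product grading \emph{onto} homogeneous components (not merely into homogeneous subspaces) and that its inverse does likewise — but this is immediate from the fact that each $\varphi^i$ induces a bijection $\Gamma^i\rightarrow\tilde\Gamma^i$ (Proposition \ref{pr:equivalence}), so that $\varphi$ induces a bijection $\Gamma^1\times\cdots\times\Gamma^n\rightarrow\tilde\Gamma^1\times\cdots\times\tilde\Gamma^n$. Hence the proof is short, and I would present it in three sentences: build $\varphi=\varphi^1\times\cdots\times\varphi^n$, observe it is an equivalence of product gradings, and apply Theorem \ref{th:equivalence_groupgradings}.
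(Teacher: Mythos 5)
Your argument is correct and is exactly the one the paper intends: the corollary is stated there without proof as an immediate consequence of Theorem \ref{th:equivalence_groupgradings}, and your three steps (form $\varphi^1\times\cdots\times\varphi^n$, note it is an equivalence of the product gradings of Definition \ref{df:product_grading}, then pass to the induced group-gradings, which are the free product group-gradings by Definition \ref{df:free_product_grading}) fill in precisely that implicit reasoning. Nothing is missing.
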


\begin{example}\label{ex:sl2.product.group-grading}
Over an algebraically closed ground field $\FF$ of characteristic not $2$, consider the simple Lie algebra $\frsl_2$.
Up to equivalence, there are only two fine gradings on $\frsl_2$ (see \cite[Theorem 3.55]{EKmon}):
\begin{itemize}
\item $\Gamma_{\frsl_2}^1$ with universal group $\ZZ$ and homogeneous components:
\[
(\frsl_2)_{-1}=\FF F,\quad (\frsl_2)_{0}=\FF H,\quad (\frsl_2)_{1}=\FF E.
\]

\item $\Gamma_{\frsl_2}^2$ with universal group $\left(\ZZ/2\right)^2$ and homogeneous components:
\begin{equation}\label{eq:Gamma2sl2}
(\frsl_2)_{(\bar 1,\bar 0)}=\FF H,\quad (\frsl_2)_{(\bar0,\bar 1)}=\FF(E+F) ,\quad (\frsl_2)_{(\bar 1,\bar 1)}=\FF (E-F).
\end{equation}
\end{itemize}

The gradings on $\cL=\frsl_2\times\frsl_2$ obtained as free product group-gradings of the fine gradings above are the following:
\begin{itemize}
\item $\left(\Gamma_{\frsl_2}^1\times\Gamma_{\frsl_2}^1\right)_{\mathrm{gr}}$ with universal group $\ZZ\times\ZZ$ and homogeneous components:
\[
\begin{array}{ll}
\cL_{(0,0)}= \FF H \times \FF H,\qquad\null & \\
\cL_{(1,0)}= \FF E \times 0, & \cL_{(0,1)}= 0 \times \FF E, \\
\cL_{(-1,0)}= \FF F \times 0, & \cL_{(0,-1)}= 0 \times \FF F.
\end{array}
\]

\item $\left(\Gamma_{\frsl_2}^1\times\Gamma_{\frsl_2}^2\right)_{\mathrm{gr}}$ with universal group $\ZZ\times\left(\ZZ/2\right)^2$ and homogeneous components:
\[
\begin{array}{ll}
\cL_{(0, (\bar{0}, \bar{0}))}= \FF H \times 0, &\cL_{(0, (1, \bar{0}))}= 0 \times \FF H, \\
\cL_{(1, (\bar{0}, \bar{0}))}= \FF E \times 0, & \cL_{(-1, (\bar{0}, \bar{0}))}= \FF F \times 0,\\
\cL_{(0, (\bar{0}, \bar{1}))}= 0 \times \FF (E+F),\qquad\null &  
\cL_{(0, (\bar{1}, \bar{1}))}= 0 \times \FF (E-F).
\end{array}
\]

\item $\left(\Gamma_{\frsl_2}^2\times\Gamma_{\frsl_2}^2\right)_{\mathrm{gr}}$ with universal group $\left(\ZZ/2\right)^4$ and homogeneous components:
\[
\begin{array}{ll}
\cL_{(\bar{1}, \bar{0},\bar{0},\bar{0})}= \FF H \times 0, &\cL_{(\bar{0}, \bar{0},\bar{1},\bar{0})}= 0 \times \FF H,  \\
\cL_{(\bar{0}, \bar{1},\bar{0},\bar{0})}= \FF (E+F) \times 0,\qquad\null & \cL_{(\bar{0}, \bar{0},\bar{0},\bar{1})}= 0 \times \FF (E+F),   \\
\cL_{(\bar{1}, \bar{1},\bar{0},\bar{0})}= \FF (E-F) \times 0 &\cL_{(\bar{0}, \bar{0},\bar{1},\bar{1})}= 0 \times \FF (E-F).
\end{array}
\]
\end{itemize}
It is checked easily that all these free product gradings are fine group-gradings, but we will prove (see Corollary \ref{co:fine_semisimple} and Example \ref{ex:sl2.cartesian.product}) that they do not exhaust the list of fine group-gradings, up to equivalence, on the semisimple algebra $\cL$.
\end{example}

\smallskip
Besides the product grading, the product group-grading, and the free product group-grading, there is one more natural definition of product of gradings in case the grading group is fixed.

Given an abelian group $G$, and $G$-graded algebras $(\cA^i,\Gamma^i,G,\delta^i)$, $i=1,\ldots,n$, there is a natural $G$-grading $(\Gamma,G,\delta)$ on the cartesian product $\cA^1\times\cdots\times\cA^n$ determined by
\[
(\cA^1\times\cdots\times\cA^n)_g=\cA^1_g\times\cdots\times\cA^n_g
\]
for any $g\in G$.

\begin{df}\label{df:G_product}
The $G$-grading above will be denoted by $\bigl(\cA^1\times\cdots\times\cA^n,\Gamma^1\times_G\cdots\times_G\Gamma^n,G,\delta^1\times_G\cdots\times_G\delta^n\bigr)$ and will be called the \emph{product $G$-grading} of the $(\Gamma^i,G,\delta^i)$'s.
\end{df}

\section{Loop algebras}\label{se:loop}

A key role in understanding gradings on semisimple algebras is played by \emph{loop algebras}, as defined in \cite{loop}.

\begin{df}(\cite[Definition 3.1.1]{loop})\label{df:loop}
Let $\pi:G\rightarrow \overline{G}$ be a surjective group homomorphism between the abelian groups $G$ and $\overline{G}$. Given any $\overline{G}$-graded algebra $(\cA,\overline{\Gamma},\overline{G},\bar\delta)$, the associated \emph{loop algebra} is the $G$-graded algebra $(L_\pi(\cA),\Gamma,G,\delta)$, where 
\[
L_\pi(\cA)\bydef\bigoplus_{g\in G}\cA_{\pi(g)}\otimes g\quad\Bigl(\leq \cA\otimes_\FF\FF G\Bigr)
\]
and $L_\pi(\cA)_g=\cA_{\pi(g)}\otimes g$ for any $g\in G$.
\end{df}

From its own definition, the following result is clear:

\begin{proposition}\label{pr:loop_finite_dimension}
The dimension of the loop algebra $L_\pi(\cA)$ is finite if and only if so is the dimension of $\cA$ and $\ker\pi$ is finite.
\end{proposition}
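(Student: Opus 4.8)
The statement to prove is Proposition~\ref{pr:loop_finite_dimension}: the loop algebra $L_\pi(\cA)$ is finite-dimensional if and only if $\cA$ is finite-dimensional and $\ker\pi$ is finite. This is a fairly elementary bookkeeping argument about the dimension of the direct sum $\bigoplus_{g\in G}\cA_{\pi(g)}\otimes g$, so the plan is essentially to compute $\dim L_\pi(\cA)$ fiberwise over the surjection $\pi$.

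The plan is to analyze the decomposition $L_\pi(\cA)=\bigoplus_{g\in G}\cA_{\pi(g)}\otimes g$ by grouping the summands according to the value of $\pi(g)$. First I would note that since $\pi:G\to\overline{G}$ is surjective, for each $\bar g\in\overline{G}$ the fiber $\pi^{-1}(\bar g)$ is a nonempty coset of $\ker\pi$ in $G$, hence has exactly $|\ker\pi|$ elements (possibly infinite). Then
\[
\dim L_\pi(\cA)=\sum_{g\in G}\dim\cA_{\pi(g)}=\sum_{\bar g\in\overline{G}}\ \sum_{g\in\pi^{-1}(\bar g)}\dim\cA_{\bar g}=\sum_{\bar g\in\overline{G}}|\ker\pi|\cdot\dim\cA_{\bar g}=|\ker\pi|\cdot\dim\cA,
\]
with the usual conventions for cardinal arithmetic (a product or sum of cardinals is finite precisely when each factor/summand is finite and only finitely many summands are nonzero). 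From this identity both directions are immediate: if $\dim\cA<\infty$ and $|\ker\pi|<\infty$ then the product is finite; conversely, if $\dim L_\pi(\cA)<\infty$, then since $L_\pi(\cA)$ is a nonzero algebra (one may assume $\cA\neq 0$, the case $\cA=0$ being trivial), we have $\dim\cA\geq 1$, so $|\ker\pi|\leq\dim L_\pi(\cA)<\infty$, and also $\dim\cA\leq\dim L_\pi(\cA)<\infty$ because $\cA\cong\cA\otimes e$ embeds as the $e$-isotypic part is not quite right---rather $\dim\cA=\sum_{\bar g}\dim\cA_{\bar g}$ appears as a subsum of $\dim L_\pi(\cA)$ once we pick one $g$ in each fiber.

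Concretely, for the converse I would pick a set-theoretic section $s:\overline{G}\to G$ of $\pi$ (so $\pi\circ s=\id$); then the subspace $\bigoplus_{\bar g\in\overline{G}}\cA_{\bar g}\otimes s(\bar g)$ of $L_\pi(\cA)$ has dimension exactly $\dim\cA$, giving $\dim\cA\leq\dim L_\pi(\cA)$. For the finiteness of $\ker\pi$, fix any $\bar g_0$ with $\cA_{\bar g_0}\neq 0$ (exists since $\cA\neq 0$); then $\bigoplus_{g\in\pi^{-1}(\bar g_0)}\cA_{\bar g_0}\otimes g$ has dimension $|\ker\pi|\cdot\dim\cA_{\bar g_0}\geq|\ker\pi|$, again a subspace of $L_\pi(\cA)$, forcing $|\ker\pi|<\infty$.

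I do not anticipate any real obstacle here; the only mild subtlety is being careful that "dimension" may a priori be an infinite cardinal, so the argument should be phrased so that it works regardless---this is why I separate out the trivial case $\cA=0$ and then use that a nonzero graded algebra has each relevant piece available as an honest subspace of $L_\pi(\cA)$. The whole proof is two or three lines once the fiberwise counting is written down.
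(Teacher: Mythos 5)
Your argument is correct and is exactly the fiberwise dimension count that the paper treats as immediate: the paper offers no written proof, stating only that the result is clear from the definition of $L_\pi(\cA)=\bigoplus_{g\in G}\cA_{\pi(g)}\otimes g$, and your section-plus-fiber bookkeeping is the natural way to spell that out. The only point worth noting is the degenerate case $\cA=0$ (where the ``only if'' direction would fail for infinite $\ker\pi$), which you flag and which the paper implicitly excludes, since its gradings are sets of nonzero subspaces summing to $\cA$.
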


Recall that given any $G$-graded algebra $(\cA,\Gamma,G,\delta)$, the same grading $\Gamma$ can be realized as a group-grading by different groups and, in particular, by its universal group. Given a surjective group homomorphism $\pi:G\rightarrow\overline{G}$ and a $\overline{G}$-graded algebra, we may consider the universal groups of the gradings in both this algebra and the associated loop algebra. 

\begin{theorem}\label{th:loop_universal_groups}
Let  $\pi:G\rightarrow \overline{G}$ be a surjective group homomorphism of abelian groups. Let $(\cA,\overline{\Gamma},\overline{G},\bar\delta)$ be a $\overline{G}$-graded algebra with universal group $(\overline{U},\delta^U_{\overline{\Gamma}})$, and let $\bar\alpha$ be the (unique) group homomorphism making commutative the diagram:
\[
\xymatrix{
&& \overline{U}\ar[dd]^{\bar\alpha}\\
\overline{\Gamma}\ar[rru]^-{\delta^U_{\overline{\Gamma}}}\ar[rrd]^-{\bar\delta} &&\\
&&\overline{G}
}
\]
(Recall that this implies that $\cA=\bigoplus_{\bar u\in\overline{U}}\cA'_{\bar u}$, with $\cA'_{\bar u}=\cA_{\bar\alpha({\bar u})}$ for any $\bar u\in\overline{U}$.)

Let $(L_\pi(\cA),\Gamma,G,\delta)$ be the associated loop algebra, with universal group $(U,\delta_\Gamma^U)$, and let $\alpha$ be the (unique) group homomorphism making commutative the diagram
\[
\xymatrix{
&& U\ar[dd]^{\alpha}\\
\Gamma\ar[rru]^{\delta_\Gamma^U}\ar[rrd]^{\delta} &&\\
&& G
}
\]
Then there exists a unique group homomorphism $\pi^U:U\rightarrow \overline{U}$ such that the diagram
\begin{equation}\label{eq:piU_pi_alphas_commute}
\begin{tabular}{c}
\xymatrix{
U\ar[r]^{\alpha}\ar[d]_{\pi^U} & G\ar[d]^{\pi}\\
\overline{U}\ar[r]^{\bar\alpha} & \overline{G}
}
\end{tabular}
\end{equation}
is commutative. Moreover, $\pi^U$ is surjective and the restriction $\alpha\vert_{\ker\pi^U}$ is a bijection $\ker\pi^U\rightarrow \ker\pi$.

 Besides, the $\overline{U}$-graded algebra $(\cA,\overline{\Gamma},\overline{U},\delta_{\overline{\Gamma}}^U)$ and the surjective group homomorphism $\pi^U:U\rightarrow \overline{U}$ define the loop algebra $(L_{\pi^U}(\cA),\Gamma^U,U,\delta_\Gamma^U)$, and the map
\[
\begin{split}
L_{\pi^U}(\cA)&\longrightarrow L_\pi(\cA)\\
x\otimes u\, &\mapsto\ x\otimes\alpha(u)
\end{split}
\]
for $x\in \cA'_{\pi^U(u)}$ and $u\in U$, is an equivalence $(L_{\pi^U}(\cA),\Gamma^U,U,\delta_\Gamma^U)\rightarrow (L_\pi(\cA),\Gamma,G,\delta)$.
\end{theorem}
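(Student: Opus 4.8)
The plan is to construct $\pi^U$ using the universal property of $U=U(\Gamma)$ from Theorem \ref{th:universal}, then verify surjectivity and the bijection on kernels by a direct computation with the explicit description of the gradings $\overline{\Gamma}$ and $\Gamma$ on $\cA$ and $L_\pi(\cA)$, and finally identify the loop construction $L_{\pi^U}(\cA)$ with $L_\pi(\cA)$ by unwinding definitions. Recall first that the homogeneous components of $\Gamma$ are the subspaces $\cA_{\pi(g)}\otimes g$ for $g$ in the support $S\bydef\{g\in G: \cA_{\pi(g)}\neq 0\}$, and that $\pi(S)=\supp_{\overline{G}}(\overline{\Gamma})$ since $\pi$ is surjective; also note that $\bar\delta$, hence $\bar\alpha\circ\delta^U_{\overline\Gamma}$, maps $\overline{\Gamma}$ onto this support.

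To build $\pi^U$: for each homogeneous component $\cA_{\pi(g)}\otimes g\in\Gamma$, the subspace $\cA_{\pi(g)}$ is a homogeneous component of $\overline{\Gamma}$, so I get an element $\delta^U_{\overline\Gamma}(\cA_{\pi(g)})\in\overline{U}$; define a map $\Gamma\to\overline{U}$ by $\cA_{\pi(g)}\otimes g\mapsto \delta^U_{\overline\Gamma}(\cA_{\pi(g)})$. One checks this map respects the multiplicative relations defining $U(\Gamma)$: if $0\neq(\cA_{\pi(g)}\otimes g)(\cA_{\pi(h)}\otimes h)\subseteq \cA_{\pi(k)}\otimes k$ in $L_\pi(\cA)$, then $k=gh$ and $0\neq \cA_{\pi(g)}\cA_{\pi(h)}\subseteq\cA_{\pi(gh)}$ in $\cA$, so the images multiply correctly in $\overline{U}$ because $\delta^U_{\overline\Gamma}$ is a degree map for the $\overline{U}$-grading. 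By the universal property of $U(\Gamma)$ this extends to a unique group homomorphism $\pi^U:U\to\overline{U}$; and uniqueness of the homomorphism $U\to\overline{G}$ obtained from $\delta:\Gamma\to G$ composed with $\pi$, together with the defining diagrams for $\alpha$ and $\bar\alpha$, forces \eqref{eq:piU_pi_alphas_commute} to commute (both $\pi\circ\alpha$ and $\bar\alpha\circ\pi^U$ send $\delta^U_\Gamma(\cA_{\pi(g)}\otimes g)$ to $\pi(g)$, and $U$ is generated by such classes).

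For surjectivity of $\pi^U$: $\overline{U}$ is generated by the classes $\delta^U_{\overline\Gamma}(\overline{\cU})$, $\overline{\cU}\in\overline\Gamma$, and each such $\overline{\cU}=\cA_{\bar g}$ with $\cA_{\bar g}\neq 0$; picking $g\in G$ with $\pi(g)=\bar g$ gives $\cA_{\pi(g)}\otimes g\in\Gamma$ mapping to it, so the generators are hit. For the kernel statement, I expect this to be the main obstacle and it needs the universal group structure genuinely: I would argue that $\alpha$ restricts to an injection on $\ker\pi^U$ because an element of $\ker\pi^U\cap\ker\alpha$ lies in $\ker\pi^U\cap\ker(\bar\alpha\circ\pi^U)$, but one must instead use that $U$ is the universal group — concretely, show $\ker\pi^U$ is generated by classes $\delta^U_\Gamma(\cA_{\pi(g_1)}\otimes g_1)\delta^U_\Gamma(\cA_{\pi(g_2)}\otimes g_2)^{-1}$ with $\pi(g_1)=\pi(g_2)$ and $\cA_{\pi(g_1)}=\cA_{\pi(g_2)}$ (a single homogeneous component of $\overline\Gamma$), on which $\alpha$ takes the value $g_1g_2^{-1}\in\ker\pi$, and that these values already generate $\ker\pi$ since $S$ surjects onto $\supp_{\overline G}(\overline\Gamma)$ with fibers that are cosets of $\ker\pi$; the heart of the matter is that no extra relations appear, which is exactly the content of $U(\Gamma)$ being universal. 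This is the place where I would expect to invoke, or re-derive in this setting, the analysis already present in \cite{loop} on universal groups of loop-algebra gradings.

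Finally, once $\pi^U$ is in hand, I form $L_{\pi^U}(\cA)=\bigoplus_{u\in U}\cA'_{\pi^U(u)}\otimes u$ using the $\overline{U}$-grading $(\cA,\overline\Gamma,\overline{U},\delta^U_{\overline\Gamma})$, where $\cA'_{\bar u}=\cA_{\bar\alpha(\bar u)}$; by the commutativity \eqref{eq:piU_pi_alphas_commute}, $\cA'_{\pi^U(u)}=\cA_{\bar\alpha(\pi^U(u))}=\cA_{\pi(\alpha(u))}$, so the map $x\otimes u\mapsto x\otimes\alpha(u)$ indeed lands in $L_\pi(\cA)_{\alpha(u)}=\cA_{\pi(\alpha(u))}\otimes\alpha(u)$. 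It is a linear bijection on each summand (identity on the $\cA$-factor), and it is an algebra homomorphism because the multiplication in both loop algebras is $(x\otimes a)(y\otimes b)=xy\otimes ab$ inherited from $\cA\otimes_\FF\FF G$ (resp. $\cA\otimes_\FF\FF U$), and $\alpha$ is a group homomorphism; it carries the homogeneous component $\cA'_{\pi^U(u)}\otimes u$ of $\Gamma^U$ onto the homogeneous component $\cA_{\pi(\alpha(u))}\otimes\alpha(u)$ of $\Gamma$, hence is an equivalence, and compatibility of the degree maps with $\alpha$ is immediate from the definitions. The only routine checks are these bookkeeping verifications; no further obstruction is expected beyond the kernel argument.
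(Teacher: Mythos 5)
Your construction of $\pi^U$ is essentially the paper's: the paper defines $\pi^U$ on $\supp_U(\Gamma)$ by sending the degree of $\cA_{\pi(g)}\otimes g$ to the degree of $\cA_{\pi(g)}$ in $\overline{U}$, which is exactly your map $\Gamma\to\overline{U}$, and your verification that the defining relations of $U(\Gamma)$ are respected (via $k=gh$ and $0\neq\cA_{\pi(g)}\cA_{\pi(h)}\subseteq\cA_{\pi(gh)}$), your check of commutativity of \eqref{eq:piU_pi_alphas_commute} on the generators $\delta^U_\Gamma(\cA_{\pi(g)}\otimes g)$, your proof of surjectivity of $\pi^U$, and your verification of the final map $x\otimes u\mapsto x\otimes\alpha(u)$ all agree with the paper (which simply declares the last part clear). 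The surjectivity of $\alpha\vert_{\ker\pi^U}$ onto $\ker\pi$ is also within your sketch and is what the paper does: for $h\in\ker\pi$ and $g\in\supp_G(\Gamma)$ one has $gh\in\supp_G(\Gamma)$, and if $u,u'\in U$ are the degrees of $\cA_{\pi(g)}\otimes g$ and $\cA_{\pi(g)}\otimes gh$, then $\pi^U(u)=\pi^U(u')$, so $u'u^{-1}\in\ker\pi^U$ and $\alpha(u'u^{-1})=h$.

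The genuine gap is the injectivity of $\alpha\vert_{\ker\pi^U}$, which you explicitly leave to be ``invoked or re-derived'' from \cite{loop}. The route you sketch cannot close it: exhibiting generators of $\ker\pi^U$ of the form $\delta^U_\Gamma(\cA_{\pi(g_1)}\otimes g_1)\,\delta^U_\Gamma(\cA_{\pi(g_2)}\otimes g_2)^{-1}$ and computing their images $g_1g_2^{-1}$ under $\alpha$ only controls the image of $\ker\pi^U$ (i.e.\ surjectivity again); it says nothing about $\alpha$ being one-to-one on $\ker\pi^U$, and ``no extra relations appear because $U(\Gamma)$ is universal'' is precisely the assertion to be proved, not a proof. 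Indeed the assertion is not a formal consequence of the presentation of $U(\Gamma)$: if, say, all products in $\cA$ vanish, then $U(\Gamma)$ is free abelian on the support and no presentation-theoretic generality can force injectivity, so some use of the homogeneous components themselves is unavoidable. The paper's argument for this step is short and of a different nature: it plays the two realizations of the same grading $\Gamma$ (as a $G$-grading via $\delta$ and as a $U$-grading via $\delta^U_\Gamma$, with $L_\pi(\cA)'_u=L_\pi(\cA)_{\alpha(u)}$) against each other. If $\tilde u\in\ker\pi^U\cap\ker\alpha$ and $u\in\supp_U(\Gamma)$, then $L_\pi(\cA)'_{u\tilde u}=L_\pi(\cA)_{\alpha(u\tilde u)}=L_\pi(\cA)_{\alpha(u)}=L_\pi(\cA)'_u\neq 0$, and since a nonzero homogeneous component carries a unique degree in the $U$-realization, $u\tilde u=u$, hence $\tilde u=e$. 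This is the missing step your proposal must supply before the kernel statement (and with it the theorem) is proved.
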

\begin{proof}
The grading $\overline{\Gamma}$ is realized as a $\overline{G}$-grading: $\cA=\bigoplus_{\bar g\in\overline{G}}\cA_{\overline{g}}$ and as a $\overline{U}$-grading: $\cA=\bigoplus_{\bar u\in\overline{U}}\cA'_{\bar u}$, where $\cA'_{\bar u}=\cA_{\bar\alpha({\bar u})}$. Similarly for $\Gamma$: $L_\pi(\cA)=\bigoplus_{g\in G}L_{\pi}(\cA)_g$ and $L_\pi(\cA)=\bigoplus_{u\in U}L_{\pi}(\cA)'_u$, where 
\[
L_{\pi}(\cA)'_u=L_{\pi}(\cA)_{\alpha(u)}=\cA_{\pi\alpha(u)}\otimes \alpha(u).
\] 
For any $u\in \supp_U(\Gamma)$, there is a unique $\bar u\in \supp_{\overline{U}}(\overline{\Gamma})$ such that  $\cA_{\pi\alpha(u)}=\cA'_{\bar u}=\cA_{\bar\alpha(\bar u)}$, and this defines a unique group homomorphism $\pi^U:U\rightarrow \overline{U}$, $u\mapsto \bar u$ (note that $\supp_U(\Gamma)$ generates the universal group $U$ and the same for $\overline{\Gamma}$ and $\overline{U}$), such that the diagram \eqref{eq:piU_pi_alphas_commute} commutes. This homomorphism $\pi^U$ is surjective.

For any $h\in\ker\pi$ and $g\in\supp_G\Gamma$, $L_{\pi}(\cA)_g=\cA_{\pi(g)}\otimes g\neq 0$, and hence we have $L_{\pi}(\cA)_{gh}=\cA_{\pi(gh)}\otimes gh=\cA_{\pi(g)}\otimes gh\neq 0$, so that $gh\in\supp_G(\Gamma)$. Therefore $g=\alpha(u)$ and $gh=\alpha(u')$ for some $u,u'\in U$, and this shows that $h=\alpha(u'u^{-1})$. Therefore the restriction $\alpha\vert_{\ker \pi^U}:\ker\pi^U\rightarrow \ker\pi$ is onto. On the other hand, if $\tilde u\in \ker\pi^U\cap \ker\alpha$, and $g\in \supp_G(\Gamma)$, there is an element $u\in U$ with $\alpha(u)=g$, and
\[
L_{\pi}(\cA)'_{u\tilde u}=L_{\pi}(\cA)_{\alpha(u\tilde u)}=L_{\pi}(\cA)_{\alpha(u)}=\cA_{\pi(g)}\otimes g=L_\pi(\cA)'_{u},
\]
so $u\tilde u=u$ and $\tilde u=e$. This shows that the restriction $\alpha\vert_{\ker \pi^U}:\ker\pi^U\rightarrow \ker\pi$ is bijective.

The last part is clear.
\end{proof}

\begin{corollary}\label{co:loop_universal}
Let  $\pi:G\rightarrow \overline{G}$ be a surjective group homomorphism of abelian groups. Let $(\cA,\overline{\Gamma},\overline{G},\bar\delta)$ be a $\overline{G}$-graded algebra, and let $(L_{\pi}(\cA),\Gamma,G,\delta)$ be the associated loop algebra. Then $(\overline{G},\bar\delta)$ is, up to isomorphism, the universal group of $\overline{\Gamma}$ if and only if $(G,\delta)$ is, up to isomorphism, the universal group of $\Gamma$.
\end{corollary}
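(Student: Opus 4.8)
The plan is to reduce the statement to a short group-theoretic diagram chase using the data already produced in Theorem \ref{th:loop_universal_groups}. Recall first that, for a group-grading realized by a $G$-grading $(\Delta,G,\delta)$, the pair $(G,\delta)$ is, up to isomorphism, the universal group of $\Delta$ exactly when the canonical homomorphism furnished by Theorem \ref{th:universal} is an isomorphism. Applying this to $\Gamma$ and to $\overline{\Gamma}$, the maps in question are precisely the homomorphisms $\alpha\colon U\to G$ and $\bar\alpha\colon\overline{U}\to\overline{G}$ appearing in Theorem \ref{th:loop_universal_groups}. Hence the corollary is equivalent to the assertion that $\alpha$ is an isomorphism if and only if $\bar\alpha$ is an isomorphism, and I would devote the proof to this.

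The only inputs I would use are the three facts recorded in Theorem \ref{th:loop_universal_groups}: the square \eqref{eq:piU_pi_alphas_commute} commutes, $\pi^U$ is surjective, and $\alpha$ restricts to a bijection $\ker\pi^U\to\ker\pi$ (together with the hypothesis that $\pi$ is surjective). From these one chases as follows. If $\bar\alpha$ is an isomorphism: $\alpha(u)=e$ forces $\bar\alpha(\pi^U(u))=e$, so $\pi^U(u)=e$ and then $u=e$ by injectivity of $\alpha|_{\ker\pi^U}$, giving injectivity of $\alpha$; and for surjectivity, given $g\in G$ choose $\bar u\in\overline{U}$ with $\bar\alpha(\bar u)=\pi(g)$, lift it to $u_0\in U$ with $\pi^U(u_0)=\bar u$, observe $\alpha(u_0)g^{-1}\in\ker\pi$, pick $u_1\in\ker\pi^U$ with $\alpha(u_1)=\alpha(u_0)g^{-1}$, and conclude $\alpha(u_0u_1^{-1})=g$ (all groups abelian). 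Conversely, if $\alpha$ is an isomorphism then $\alpha^{-1}(\ker\pi)=\ker\pi^U$; hence $\bar\alpha(\bar u)=e$ with $\bar u=\pi^U(u)$ (using surjectivity of $\pi^U$) gives $\alpha(u)\in\ker\pi$, so $u\in\ker\pi^U$ and $\bar u=e$, proving injectivity of $\bar\alpha$; surjectivity is immediate since $\bar\alpha\circ\pi^U=\pi\circ\alpha$ with $\pi$ and $\alpha$ surjective.

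I do not expect a real obstacle: once Theorem \ref{th:loop_universal_groups} is available this is entirely routine. The two places deserving slight care are, first, making sure the translation ``$(G,\delta)$ is the universal group up to isomorphism'' $\Longleftrightarrow$ ``$\alpha$ is an isomorphism'' is correctly invoked (via Theorem \ref{th:universal}) before the chase begins, and second, in the surjectivity step of ``$\bar\alpha$ iso $\Rightarrow$ $\alpha$ iso'', correcting the lift $u_0$ by an element of $\ker\pi^U$ --- this is exactly where the bijectivity $\ker\pi^U\to\ker\pi$, rather than mere injectivity, is used.
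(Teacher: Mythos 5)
Your proposal is correct and follows exactly the route the paper intends: the corollary is stated as an immediate consequence of Theorem \ref{th:loop_universal_groups}, and your diagram chase (using the commutativity of \eqref{eq:piU_pi_alphas_commute}, the surjectivity of $\pi^U$, and the bijectivity of $\alpha\vert_{\ker\pi^U}$, after translating ``universal group up to isomorphism'' into ``$\alpha$, respectively $\bar\alpha$, is an isomorphism'' via Theorem \ref{th:universal}) is precisely the routine verification the paper leaves implicit. Nothing is missing.
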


Recall that given an $\FF$-algebra $\cA$, its \emph{centroid} $C(\cA)$ is the centralizer in $\End_\FF(\cA)$ of the (associative) subalgebra $\Mult(\cA)$ generated by the left and right multiplications by elements in $\cA$. The algebra $\cA$ is said to be \emph{central simple} if it is simple and $C(\cA)=\FF 1$ (here $1$ denotes the unity of $\End_\FF(\cA)$, that is, the identity map on $\cA$). Central simple algebras are those simple algebras that remain simple after extension of scalars. (See \cite[Chapter X]{Jacobson} or \cite[Theorem I.2.5]{EldMyung}.)

Given an abelian group $G$ and a $G$-graded algebra $(\cA,\Gamma,G,\delta)$, we have $\cA=\bigoplus_{g\in G}\cA_g$, and we may consider the subspaces of the centroid: 
\[
C(\cA)_g=\{ c\in C(\cA)\mid c\cA_{g'}\subseteq \cA_{gg'}\ \forall g'\in G\}
\]
for any $g\in G$. In general $\bigoplus_{g\in G}C(\cA)_g$ may fail to be the whole centroid $C(\cA)$.

The $G$-graded algebra $(\cA,\Gamma,G,\delta)$ is said to be \emph{graded-central} if $C(\cA)_e=\FF1$, and \emph{graded-central-simple} if it is graded-central and graded-simple.

We collect in the next result the properties on loop algebras that will be needed later on:

\begin{theorem}\label{th:loop_properties}
Let $G$ be an abelian group.
\begin{enumerate}
\item If a $G$-graded algebra $(\cA,\Gamma,G,\delta)$ is graded-simple, then $C(\cA)=\bigoplus_{g\in G}C(\cA)_g$ (i.e., the centroid is $G$-graded too). This $G$-grading on $C(\cA)$ will be denoted by $(\Gamma_{C(\cA)},G,\delta_{C(\cA)})$.

\item If a $G$-graded algebra $(\cA,\Gamma,G,\delta)$ is graded-simple and $H$ denotes the support of the induced grading on the centroid: $H=\supp_G(\Gamma_{C(\cA)})$, then $H$ is a subgroup of $G$. Moreover, if the ground field $\FF$ is algebraically closed and $(\cA,\Gamma,G,\delta)$ is graded-central-simple, then the centroid $C(\cA)$ is isomorphic, as a $G$-graded algebra, to the group algebra $\FF H$ with its natural $G$-grading ($\bigl(\FF H\bigr)_h=\FF h$ for $h\in H$, and $\bigl(\FF H\bigr)_g=0$ for $g\in G\setminus H$).

\item Let $\pi:G\rightarrow \overline{G}$ be a surjective group homomorphism with kernel $H$ and let $(\cA,\overline{\Gamma},\overline{G},\bar\delta)$ be a central simple $\overline{G}$-graded algebra. Then the associated loop algebra $(L_\pi(\cA),\Gamma,G,\delta)$ is graded-central-simple and the map
\[
\begin{split}
\FF H&\longrightarrow C\bigl(L_\pi(\cA)\bigr)\\
 h\ &\mapsto\ \Bigl(x\otimes g\mapsto x\otimes hg\Bigr)
\end{split}
\]
for any $g\in G$ and $x\in \cA_{\pi(g)}$, is an isomorphism of $G$-graded algebras.

\item Let $(\cB,\tilde\Gamma,G,\tilde\delta)$ be a graded-central-simple $G$-graded algebra. Assume that the ground field $\FF$ is algebraically closed, and let $H$ be the support of the induced grading on the centroid: $H=\supp_G(\Gamma_{C(\cB)})$. Let $\pi:G\rightarrow \overline{G}$ be a surjective group homomorphism with kernel $H$. Then there exists a central simple $\overline{G}$-graded algebra $(\cA,\overline{\Gamma},\overline{G},\bar\delta)$ such that $(\cB,\tilde\Gamma,G,\tilde\delta)$ is isomorphic, as a $G$-graded algebra, to the associated loop algebra $(L_\pi(\cA),\Gamma,G,\delta)$. Moreover, the algebra $\cA$ is a quotient of $\cB$ (i.e., there is a surjective homomorphism of algebras $\cB\rightarrow \cA$).

\item Assume that the ground field $\FF$ is algebraically closed. Let $H^1$ and $H^2$ be subgroups of $G$, consider the quotient groups $\overline{G}^i=G/H^i$ and the natural projections $\pi^i:G\rightarrow \overline{G}^i$, $i=1,2$. Let $(\cA^i,\overline{\Gamma}^i,\overline{G}^i,\bar\delta^i)$ be a central simple $\overline{G}^i$-graded algebra for $i=1,2$. Then the associated loop algebras $(L_{\pi^1}(\cA^1),\Gamma^1,G,\delta^1)$ and $(L_{\pi^2}(\cA^2),\Gamma^2,G,\delta^2)$ are isomorphic, as $G$-graded algebras, if and only if $H^1=H^2$ and the $\overline{G}=G/H^1$-graded algebras $(\cA^1,\overline{\Gamma}^1,\overline{G}^1,\bar\delta^1)$ and $(\cA^2,\overline{\Gamma}^2,\overline{G}^2,\bar\delta^2)$ are isomorphic.
\end{enumerate}
\end{theorem}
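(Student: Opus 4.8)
The five assertions together constitute the structure theory of loop algebras developed in \cite{loop}, and the plan is to quote the relevant statements from there; I describe the skeleton of the arguments. For (1), write an arbitrary $c\in C(\cA)$ as $c=\sum_g c_g$, where $c_g\in\End_\FF(\cA)$ sends $\cA_{g'}$ into $\cA_{gg'}$ by $x\mapsto (c(x))_{gg'}$ (a locally finite sum). Every left or right multiplication $L_a,R_a$ by a homogeneous $a$ is homogeneous of degree $\deg a$, so, comparing the homogeneous components of degree $(\deg a)g$ in $cL_a=L_ac$ and $cR_a=R_ac$ and using that $G$ is abelian, one gets $c_gL_a=L_ac_g$ and $c_gR_a=R_ac_g$ for each $g$; since homogeneous elements span $\cA$, each $c_g\in C(\cA)$, so $C(\cA)=\bigoplus_g C(\cA)_g$. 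For (2), graded-simplicity makes the graded centroid a graded-division algebra --- a nonzero homogeneous $c\in C(\cA)$ has graded kernel and graded image, which are graded ideals of $\cA$, forcing $c$ bijective with homogeneous inverse --- and it is commutative and associative because $\cA=\cA^2$; hence $H=\supp_G(\Gamma_{C(\cA)})$ is a subgroup of $G$. If moreover $\FF$ is algebraically closed and $C(\cA)_e=\FF 1$, then each $C(\cA)_h$ ($h\in H$) is $1$-dimensional, because the quotient of two nonzero homogeneous elements of the same degree lies in $C(\cA)_e=\FF 1$; fixing $0\neq u_h\in C(\cA)_h$ with $u_e=1$ gives $u_gu_h=\sigma(g,h)u_{gh}$ for a $2$-cocycle $\sigma\colon H\times H\to\FF^\times$ which is symmetric by commutativity, and since $\FF^\times$ is divisible (as $\FF$ is algebraically closed), hence an injective $\ZZ$-module, $\mathrm{Ext}^1_{\ZZ}(H,\FF^\times)=0$, so $\sigma$ is a coboundary; rescaling the $u_h$ accordingly yields the $G$-graded isomorphism $C(\cA)\cong\FF H$.

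For (3), first $L_\pi(\cA)^2\neq 0$ since $\cA^2\neq 0$. To see graded-simplicity, let $0\neq I$ be a graded ideal and pick a homogeneous $0\neq x\otimes g\in I$ with $x\in\cA_{\pi(g)}$. By simplicity of $\cA$ the ideal of $\cA$ generated by $x$ is all of $\cA$, so for each $h\in G$ the subspace $\cA_{\pi(h)}$ is spanned by products of $x$ with homogeneous elements of $\cA$; the degrees of the multiplying factors necessarily multiply to $\pi(g)^{-1}\pi(h)$, and, since $\pi$ is surjective and $\overline{G}$ is abelian, they lift to elements of $G$ whose product is $g^{-1}h$. Performing the corresponding products in $L_\pi(\cA)$ starting from $x\otimes g$ puts $\cA_{\pi(h)}\otimes h$ inside $I$, so $I=L_\pi(\cA)$. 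Next, the displayed map $\FF H\to C(L_\pi(\cA))$ takes values in the centroid because the multiplication of $L_\pi(\cA)$ respects the $\overline{G}$-grading of $\cA$, and it is an injective homomorphism of $G$-graded algebras by direct inspection, whence $H\subseteq H':=\supp_G(\Gamma_{C(L_\pi(\cA))})$. For the reverse inclusion (which, combined with (2), gives graded-centrality): unwinding the centroid identity, a nonzero homogeneous $c\in C(L_\pi(\cA))_k$ induces on $\cA$ a nonzero endomorphism $\bar c$ with $\bar c(ab)=\bar c(a)b=a\bar c(b)$ raising $\overline{G}$-degree by $\pi(k)$, i.e. a nonzero element of $C(\cA)_{\pi(k)}$; applying (1) to the simple algebra $\cA$ and using $C(\cA)=\FF 1$ forces $\pi(k)=e$, so $k\in H$ and $H'=H$. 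Hence $C(L_\pi(\cA))\cong\FF H$ has trivial identity component, $L_\pi(\cA)$ is graded-central-simple, and the displayed $G$-graded embedding is onto, giving the asserted isomorphism.

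For (4), start from a graded-central-simple $\cB$; by (1)--(2) we have $C(\cB)\cong\FF H$ with $H=\supp_G(\Gamma_{C(\cB)})$, and take $\pi\colon G\to\overline{G}=G/H$ the canonical projection. Letting $\omega\subseteq C(\cB)$ be the augmentation ideal, set $\cA:=\cB/\omega\cB=\cB\otimes_{C(\cB)}\FF$; since $\omega\cB$ is a graded ideal of $\cB$ for the coarsening of $\Gamma$ to $\overline{G}$, the $\overline{G}$-grading descends to $\cA$, and $\cB\to\cA$ is the required surjective homomorphism. One then checks, as in \cite{loop}, that $\cA$ is central simple --- it is graded-simple, and, being graded-central with trivial centroid support, it is in fact simple --- and, choosing a set-theoretic section of $\pi$ to split each $\overline{G}$-homogeneous component of $\cB$ over $C(\cB)\cong\FF H$, one produces a $G$-graded isomorphism $\cB\cong L_\pi(\cA)$. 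For (5), a $G$-graded isomorphism $L_{\pi^1}(\cA^1)\cong L_{\pi^2}(\cA^2)$ carries centroid to centroid, hence by (3) induces a $G$-graded isomorphism $\FF H^1\cong\FF H^2$, which forces $H^1=H^2$ and therefore $\pi^1=\pi^2$; since each $\cA^i$ is recovered functorially as $L_{\pi^i}(\cA^i)\otimes_{C}\FF$, the isomorphism descends to a $\overline{G}$-graded isomorphism $\cA^1\cong\cA^2$, and the converse implication is just the functoriality of $L_\pi$. The main obstacle is Part (4): identifying the correct quotient $\cA$, proving it is central simple, and reconstructing $\cB$ as $L_\pi(\cA)$ --- controlling in particular the non-split extensions $1\to H\to G\to\overline{G}\to 1$ --- is the substantive input, and once it is available the items (1)--(3) and (5) are comparatively formal.
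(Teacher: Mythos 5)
Your proposal follows essentially the same route as the paper, whose proof consists precisely of invoking the corresponding results of \cite{loop} (Lemmas 4.2.3 and 4.3.8 for (1)--(2), Lemma 5.1.3 and Proposition 5.2.3 for (3), Theorem 7.1.1 together with (2) for (4), and Remark 6.3.7 with Theorem 7.1.1 for (5)); your added skeletons are faithful to those arguments. One caution about the sketches themselves: in (1) graded-simplicity is exactly what makes the pointwise sum $c=\sum_g c_g$ finite (the kernel of each homogeneous component $c_g$ is a graded ideal containing a nonzero homogeneous element unless $c_g\neq 0$), so an argument that never uses graded-simplicity cannot conclude $C(\cA)=\bigoplus_g C(\cA)_g$, and in (3) the components $c_g$ of a homogeneous centroid element a priori depend on $g$ and not only on $\pi(g)$, so the induced map $\bar c$ on $\cA$ requires a short additional argument --- both points are taken care of in the cited results of \cite{loop}.
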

\begin{proof}
The first part follows from \cite[Lemma 4.2.3]{loop}, the second from \cite[Lemma 4.2.3]{loop} and \cite[Lemma 4.3.8]{loop}, the third part from \cite[Lemma 5.1.3 and Proposition 5.2.3]{loop} and the fourth from \cite[Theorem 7.1.1(ii)]{loop} taking into account the second item. Finally, the last item follows from \cite[Remark 6.3.7 and Theorem 7.1.1(iii)]{loop}.
\end{proof}

The condition of a group-grading being fine is well behaved with respect to the loop algebra construction:

\begin{proposition}\label{pr:loop_fine}
Let $\pi:G\rightarrow \overline{G}$ be a surjective group homomorphism with kernel $H$, let $(\cA,\overline{\Gamma},\overline{G},\bar\delta)$ be a $\overline{G}$-graded algebra and let  $(L_\pi(\cA),\Gamma,G,\delta)$ be the associated loop algebra.
\begin{itemize}
\item If $\Gamma$ is a fine group-grading, so is $\overline{\Gamma}$.
\item If $\cA$ is central simple and $\overline{\Gamma}$ is a fine group-grading, so is $\Gamma$.
\end{itemize}
\end{proposition}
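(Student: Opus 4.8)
The plan is to prove each implication by contraposition: starting from a proper group-grading refinement of one of the two gradings, I would build a proper group-grading refinement of the other, contradicting the fineness hypothesis.

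\emph{First bullet.} Suppose $\overline\Gamma$ admits a proper group-grading refinement and realize it through its universal group, say $(\cA,\overline{\Gamma}',\overline{G}',\bar{\delta}')$, with $\overline{G}'$-homogeneous components $\cA'_{\bar g'}$; by Theorem~\ref{th:grading_group-grading} there is an induced homomorphism $\rho:\overline{G}'\to\overline G$ with $\cA'_{\bar g'}\subseteq\cA_{\rho(\bar g')}$ and $\cA_{\bar g}=\bigoplus_{\rho(\bar g')=\bar g}\cA'_{\bar g'}$. I would then form the fibre product $G'\bydef G\times_{\overline G}\overline{G}'=\{(g,\bar g')\mid\pi(g)=\rho(\bar g')\}$, with projections $\alpha':G'\to G$ and $\pi':G'\to\overline{G}'$ (the latter surjective because $\pi$ is), and build the loop algebra $L_{\pi'}(\cA)$ of $(\cA,\overline{\Gamma}',\overline{G}',\bar{\delta}')$ along $\pi'$. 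Exactly as in the last part of Theorem~\ref{th:loop_universal_groups}, the map $x\otimes g'\mapsto x\otimes\alpha'(g')$ (for $x\in\cA'_{\pi'(g')}$) should be an isomorphism of algebras $L_{\pi'}(\cA)\to L_\pi(\cA)$: it is well defined since $\cA'_{\pi'(g')}\subseteq\cA_{\pi(\alpha'(g'))}$, multiplicative since $\alpha'$ is a homomorphism, and bijective one layer at a time by the refinement decomposition of each $\cA_{\pi(g)}$. Transporting the $G'$-grading of $L_{\pi'}(\cA)$ through this isomorphism then yields a group-grading of $L_\pi(\cA)$ whose homogeneous components $\cA'_{\pi'(g')}\otimes\alpha'(g')$ lie inside $L_\pi(\cA)_{\alpha'(g')}$, so it refines $\Gamma$; and it is a proper refinement because any $\overline\Gamma$-component that gets split in $\overline{\Gamma}'$ splits the corresponding layer of $L_\pi(\cA)$. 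This contradicts the fineness of $\Gamma$.

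\emph{Second bullet.} Assume $\cA$ is central simple and $\overline\Gamma$ is a fine group-grading, and suppose $\Gamma$ admits a proper group-grading refinement, realized through its universal group $(L_\pi(\cA),\Gamma',G',\delta')$ with induced homomorphism $\rho:G'\to G$; write $\cB\bydef L_\pi(\cA)$ and $H\bydef\ker\pi$. The first step is that, $\Gamma'$ being a refinement of $\Gamma$, every $\Gamma'$-graded subspace of $\cB$ is automatically $\Gamma$-graded (because $\cB_g=\bigoplus_{\rho(g')=g}\cB'_{g'}$); hence every $\Gamma'$-graded ideal is $\Gamma$-graded, and since $(\cB,\Gamma)$ is graded-simple with $\cB^2=\cB$ by Theorem~\ref{th:loop_properties}(3) (this is where central simplicity of $\cA$ enters), $(\cB,\Gamma',G',\delta')$ is graded-simple too. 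Then $C(\cB)$ is $\Gamma'$-graded by Theorem~\ref{th:loop_properties}(1), whereas $C(\cB)\cong\FF H$ as a $G$-graded algebra by Theorem~\ref{th:loop_properties}(3), with one-dimensional $G$-components $\FF c_h$, where $c_h(x\otimes g)=x\otimes hg$. As the $\Gamma'$-grading of $C(\cB)$ refines its $G$-grading and a one-dimensional space has no proper decomposition into nonzero subspaces, each $c_h$ is $\Gamma'$-homogeneous, of some degree $\iota(h)\in G'$, and $\iota:H\to G'$ is an injective homomorphism with $\rho\circ\iota=\id_H$; put $H'\bydef\iota(H)$.

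Writing the $\Gamma'$-components as $\cB'_{g'}=V_{g'}\otimes\rho(g')$ with $V_{g'}\subseteq\cA_{\pi(\rho(g'))}$, the invertibility of the $c_h$ will force $V_{\iota(h)g'}=V_{g'}$, so $g'\mapsto V_{g'}$ factors through $\overline{G}'\bydef G'/H'$; write $V_{\bar g'}$ for the common value. I would then check that the nonzero $V_{\bar g'}$ form an $\overline{G}'$-grading of $\cA$: multiplicativity is inherited from $\Gamma'$, while the directness of $\cA=\bigoplus_{\bar g'}V_{\bar g'}$ follows because, for each fixed $g_0\in G$, reduction modulo $H'$ is a bijection from $\{g'\mid\rho(g')=g_0\}$ onto the fibre over $\pi(g_0)$ of the induced map $\overline{G}'\to\overline G$, identifying $V_{g'}$ with $V_{\bar g'}$, so this family is just the $\Gamma'$-decomposition of the layer $\cB_{g_0}$ read inside $\cA$. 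This $\overline{G}'$-grading refines $\overline\Gamma$, and it is proper since a layer of $\cB$ that splits in $\Gamma'$ produces, through that same bijection, a split $\overline\Gamma$-component; this contradicts the fineness of $\overline\Gamma$. The delicate direction is the second one, and its crux is the chain of implications: $\Gamma'$ being a refinement of $\Gamma$ makes $(\cB,\Gamma')$ graded-simple, which makes the centroid $\Gamma'$-graded, which forces each $c_h$ to be $\Gamma'$-homogeneous; homogeneity of the centroid is precisely what makes the refinements of the various layers lying over a fixed $\overline\Gamma$-component mutually coherent, hence descendable to $\cA$. All the rest is bookkeeping with the group homomorphisms.
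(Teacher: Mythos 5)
Your proof is correct and takes essentially the same route as the paper's: for the first bullet the paper simply equips $L_\pi(\cA)$ directly with the refinement whose components are $\cA'_k\otimes g$ for $\pi(g)=\varphi(k)$ (your fibre-product loop algebra and transport isomorphism are a repackaging of exactly this grading), and for the second bullet the paper uses precisely your chain: graded-simplicity passes from $\Gamma$ to the refinement, hence the centroid $\cong\FF H$ becomes graded with each $c_h$ homogeneous, the support $H'$ maps isomorphically onto $H$, and the refined components descend modulo $H'$ to a proper group-grading refinement of $\overline{\Gamma}$ on $\cA$. No gaps.
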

\begin{proof}
If $\Gamma$ is fine, as a group-grading, and $\overline{\Gamma'}$ is group-grading that properly refines $\overline{\Gamma}$ with universal group $(K,\bar\delta')$: $\overline{\Gamma'}:\cA=\bigoplus_{k\in K}\cA'_k$, then by Theorem \ref{th:universal} there is a group homomorphism $\varphi:K\rightarrow \overline{G}$ with $\varphi\bar\delta'=\bar\delta$. Then we may define a $G\times K$-grading on $L_\pi(\cA)$ by means of:
\[
L_\pi(\cA)_{(g,k)}=\begin{cases} \cA'_k\otimes g,&\text{if $\varphi(k)=\pi(g)$},\\
   0,&\text{otherwise,}
   \end{cases}
\]
that properly refines $\Gamma$, a contradiction.

Conversely, assuming that $\cA$ is central simple and $\overline{\Gamma}$ is a fine group-grading, let $\Gamma'$ be a group-grading that properly refines $\Gamma$ with universal group $(K,\delta')$. As before, there is a group homomorphism $\varphi:K\rightarrow G$ such that $\varphi\delta'=\delta$. For each $k\in K$, the homogeneous component $L_\pi(\cA)'_k$ is of the form $\cA'_k\otimes \varphi(k)$ for a subspace $\cA'_k$ contained in $\cA_{\pi\varphi(k)}$. Now, $(\Gamma',K,\delta')$ induces a $K$-grading $\Gamma'_C$ on the centroid $C\bigl(L_\pi(\cA)\bigr)$ that refines the one induced by $\Gamma$. By \ref{th:loop_properties}.(3), this last grading is fine with one-dimensional homogeneous components. It follows that the support $H'$ of $\Gamma'_C$ is a subgroup of $K$ and that $\varphi\vert_{H'}$ gives an isomorphism $H'\rightarrow H$. Hence $\varphi$ induces a homomorphism $\bar\varphi:K/H'\rightarrow G/H=\overline{G}$ and we may define a $K/H'$-grading on $\cA$ by means of $\cA'_{kH'}\bydef \cA_k$. This is well defined and provides a proper refinement of $\overline{\Gamma}$, a contradiction.
\end{proof}

We finish this section with the characterization of the semisimple loop algebras. Recall that, in this paper, a semisimple algebra is a finite direct sum of simple ideals or, alternatively, an algebra that is isomorphic to a finite cartesian product of simple algebras. If an $\FF$-algebra $\cA$ becomes semisimple after extending scalars to an algebraic closure $\overline{\FF}$, then it is semisimple. This is due to the fact that being semisimple means that it is completely reducible as a module for the multiplication algebra $\Mult(\cA)$, with only finitely many irreducible submodules, and if a module is completely reducible after extension of scalars, it is indeed completely reducible (see, for instance, \cite[Lemma III.4]{Jacobson}).

\begin{theorem}\label{th:loop_semisimple}
Let $\pi:G\rightarrow \overline{G}$ be a surjective group homomorphism of abelian groups with finite kernel $H=\ker\pi$. Let $(\cA,\overline{\Gamma},\overline{G},\bar\delta)$ be a central simple $\overline{G}$-graded algebra and let  $(L_\pi(\cA),\Gamma,G,\delta)$ be the associated loop algebra. Then $L_\pi(\cA)$ is semisimple if and only if the characteristic of $\FF$ does not divide the order of $H$.

If this is the case, and if the ground field $\FF$ is algebraically closed, then $L_\pi(\cA)$ is isomorphic to the cartesian product of $\lvert H\rvert$ copies of $\cA$.
\end{theorem}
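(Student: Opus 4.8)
The plan is to analyze the multiplication algebra of $L_\pi(\cA)$ through the centroid, using parts (1)--(3) of Theorem \ref{th:loop_properties}. First I would recall from Theorem \ref{th:loop_properties}.(3) that $(L_\pi(\cA),\Gamma,G,\delta)$ is graded-central-simple and that its centroid $C(L_\pi(\cA))$ is isomorphic, as a $G$-graded algebra, to the group algebra $\FF H$, with $h\in H$ acting as $x\otimes g\mapsto x\otimes hg$. Since $L_\pi(\cA)$ is graded-simple, it is completely reducible as a module for its multiplication algebra $\Mult(L_\pi(\cA))$ — more precisely it is a sum of copies of a single irreducible module (the nonzero $\Mult$-submodules are graded ideals of the graded-simple algebra only after one passes to the graded picture; the ungraded statement requires the argument below). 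The key algebraic fact is that $\Mult(L_\pi(\cA))$ commutes with $C(L_\pi(\cA))\cong \FF H$, so $L_\pi(\cA)$ is a module over $\Mult(L_\pi(\cA))\otimes \FF H$, and the decomposition of $L_\pi(\cA)$ into simple ideals is governed by the idempotents of $\FF H$.

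The decisive dichotomy is then the classical one for the group algebra of a finite abelian group: $\FF H$ is semisimple (a product of fields) if and only if $\chr\FF \nmid |H|$ (Maschke), and otherwise $\FF H$ has a nonzero nilpotent radical. I would argue that $L_\pi(\cA)$ is semisimple as an $\FF$-algebra if and only if $\FF H$ is semisimple: if $\chr\FF\mid|H|$, pick a nonzero nilpotent $z\in\FF H = C(L_\pi(\cA))$; then $z\cdot L_\pi(\cA)$ is a nonzero ideal (it is stable under $\Mult$ since $z$ is a centroidal element) which is nilpotent as an ideal, so $L_\pi(\cA)$ is not a direct sum of simple ideals, i.e.\ not semisimple in the sense of this paper. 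Conversely, if $\chr\FF\nmid|H|$, I would show $L_\pi(\cA)$ is a finite direct sum of simple ideals by combining graded-simplicity with the separability of $C(L_\pi(\cA))$; here one can reduce to the algebraically closed case by the descent remark preceding the theorem (a nonassociative algebra that becomes semisimple after base change to $\overline\FF$ is already semisimple), so it suffices to treat $\FF=\overline\FF$.

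For the last assertion, assume $\FF$ is algebraically closed and $\chr\FF\nmid|H|$. Then $\FF H\cong \FF^{|H|}$ as an algebra, so $C(L_\pi(\cA))$ contains $|H|$ orthogonal primitive idempotents $\epsilon_1,\dots,\epsilon_{|H|}$ summing to $1$. Each $\epsilon_i L_\pi(\cA)$ is an ideal, and the decomposition $L_\pi(\cA)=\bigoplus_i \epsilon_i L_\pi(\cA)$ is a direct sum of ideals; graded-central-simplicity (Theorem \ref{th:loop_properties}.(3)) forces each summand to be simple with centroid $\FF$. It remains to identify each $\epsilon_i L_\pi(\cA)$ with $\cA$. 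Concretely, the idempotents of $\FF H$ are the usual orthogonal idempotents $\epsilon_\chi=\frac{1}{|H|}\sum_{h\in H}\chi(h)^{-1}h$ indexed by characters $\chi\colon H\to\FF^\times$ (there are $|H|$ of them since $\FF$ is algebraically closed and $\chr\FF\nmid|H|$), and I would check that the map $L_\pi(\cA)\to\cA$, $x\otimes g\mapsto \chi(\text{``}g\text{''})\,x$ suitably interpreted (using a set-theoretic section of $\pi$ and twisting by $\chi$), restricts to an algebra isomorphism $\epsilon_\chi L_\pi(\cA)\xrightarrow{\sim}\cA$; this is essentially the standard fact that $\cA\otimes_\FF \FF H\cong \prod_\chi \cA$ when $\FF H$ splits, restricted to the graded subalgebra $L_\pi(\cA)$, where the restriction is harmless because $\dim_\FF L_\pi(\cA)=|H|\dim_\FF\cA=\dim_\FF(\cA\otimes\FF H)$ forces $L_\pi(\cA)=\cA\otimes_\FF\FF G$ is not quite right — rather one observes $L_\pi(\cA)$ has the same dimension as $|H|$ copies of $\cA$ and the $|H|$ ideals $\epsilon_\chi L_\pi(\cA)$ each surject onto the quotient $\cA$ (Theorem \ref{th:loop_properties}.(4)) hence are isomorphic to $\cA$ by dimension count.

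The main obstacle I anticipate is the converse direction's ungraded semisimplicity claim when $\chr\FF\nmid|H|$: passing from ``graded-simple with separable centroid'' to ``finite direct sum of simple ideals'' in a general nonassociative setting. The cleanest route is the reduction to $\FF$ algebraically closed (via the descent remark quoted just before the theorem), after which the explicit idempotents of $\FF H\cong\FF^{|H|}$ do all the work and the nilpotent-radical argument handles the other implication; so the real content is bookkeeping the isomorphism $L_\pi(\cA)\cong\cA\times\cdots\times\cA$ carefully, for which a dimension count combined with Theorem \ref{th:loop_properties}.(4) (each simple summand is a quotient of $L_\pi(\cA)$, and $\cA$ is the only candidate of the right dimension) is the efficient finish.
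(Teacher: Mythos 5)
Your overall architecture is close to the paper's: the square-zero centroid element for the case $\chr\FF\mid|H|$ (the paper uses $c=\sum_{h\in H}h$ directly), the reduction to $\FF$ algebraically closed via the descent remark, and the idea that the $|H|$ characters of $H$ govern the splitting $L_\pi(\cA)\cong\cA\times\cdots\times\cA$. The genuine gap is in your identification of each summand $\epsilon_\chi L_\pi(\cA)$ with $\cA$. Theorem \ref{th:loop_properties}.(4) supplies \emph{one} surjection $L_\pi(\cA)\rightarrow\cA$ (in the loop model one may take $x\otimes g\mapsto x$), but that map does not restrict to a surjection on each ideal $\epsilon_\chi L_\pi(\cA)$: applying it to $\epsilon_\chi(x\otimes g)=\frac{1}{|H|}\sum_{h\in H}\chi(h)^{-1}\,x\otimes hg$ gives $\bigl(\frac{1}{|H|}\sum_{h}\chi(h)^{-1}\bigr)x=0$ for every nontrivial $\chi$, so your ``each ideal surjects onto $\cA$, hence isomorphic by dimension count'' fails as stated. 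What is needed is, for each $\chi\in\widehat H$, the $\chi$-twisted homomorphism $x\otimes g\mapsto\tilde\chi(g)x$ where $\tilde\chi$ is a character of $G$ \emph{extending} $\chi$; only then is the map multiplicative, and only then does it kill the other summands and map $\epsilon_\chi L_\pi(\cA)$ onto $\cA$. Your parenthetical ``set-theoretic section of $\pi$ and twisting by $\chi$'' does not produce a homomorphism in general (a $2$-cocycle obstruction appears whenever $\pi$ does not split, e.g.\ $\ZZ/4\rightarrow\ZZ/2$ as in Example \ref{ex:sl2.cartesian.product}). This extension step is exactly where the paper does the work: $\FF^\times$ is divisible, hence injective as a $\ZZ$-module, so each $\chi$ extends to $G$; the paper then assembles the maps into $\Phi$ of Equation \eqref{eq:LpiA_An} and proves bijectivity on each homogeneous component of the induced $\overline{G}$-gradings from the invertibility of the character matrix $\bigl(\chi_j(h_i)\bigr)$ (linear independence of characters). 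Your earlier alternative claim that ``graded-central-simplicity forces each summand to be simple with centroid $\FF$'' is also unjustified: graded-simplicity controls graded ideals, and the $\epsilon_\chi L_\pi(\cA)$ are not graded for the $G$-grading, so no abstract argument of that kind is available without the explicit character computation.

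Two smaller points. First, the theorem does not assume $\cA$ finite-dimensional, so any dimension count (including $\dim L_\pi(\cA)=|H|\dim\cA$) should be carried out homogeneous-component-wise, as the paper does; this also repairs the injectivity of the twisted maps on each summand. Second, in the direction $\chr\FF\mid|H|$ your argument is fine, but it is cleaner to pick $z$ with $z^2=0$ outright (a power of your nilpotent, or the paper's $\sum_{h\in H}h$), note $zL_\pi(\cA)\neq0$ because the centroid acts by permuting homogeneous components as in Theorem \ref{th:loop_properties}.(3), and conclude that a nonzero ideal with zero square is impossible in a finite direct sum of simple ideals.
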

\begin{proof}
If $\chr(\FF)$ divides $\lvert H\rvert$ then $c=\sum_{h\in H}h$ is a nonzero element of the group algebra $\FF H$ such that $c^2=\lvert H\rvert c=0$. By Theorem \ref{th:loop_properties}.(3) there is a nonzero element $\tilde c\in C\bigl(L_{\pi}(\cA)\bigr)$ with $\tilde c^2=0$ and hence the square of the nonzero ideal $\tilde cL_{\pi}(\cA)$ is zero, and $L_\pi(\cA)$ is not semisimple.

On the other hand, if $\chr(\FF)$ does not divide $n=\lvert H\rvert$, in order to show that $L_\pi(\cA)$ is semisimple, we may assume that $\FF$ is algebraically closed. In this case, the group of characters of $H$ consists of $n$ elements (see \cite[\S 5.6]{JacBAII}): $\widehat H=\{\chi_1,\ldots,\chi_n\}$. Also, as $\FF$ is algebraically closed, $\FF^\times$ is a divisible group, that is, an injective $\ZZ$-module (\cite[\S 3.11]{JacBAII}) and hence these characters may be extended to characters on the whole $G$. Consider the linear map:
\begin{equation}\label{eq:LpiA_An}
\begin{split}
\Phi: L_\pi(\cA)&\longrightarrow \cA\times\cdots\times\cA\quad \text{($n$ copies)}\\
x_{\pi(g)}\otimes g&\mapsto \bigl(\chi_1(g)x_{\pi(g)},\ldots,\chi_n(g)x_{\pi(g)}\bigr).
\end{split}
\end{equation}
The linear map $\Phi$ is a homomorphism of $\overline{G}$-graded algebras, where the $\overline{G}$-grading on $L_\pi(\cA)$ is given by the coarsening of $\Gamma$ induced by $\pi$, and the $\overline{G}$-grading on $\cA\times\cdots\times\cA$ is the product $\overline{G}$-grading (Definition \ref{df:G_product}). For any $\bar g\in\overline{G}$ in the support of $\overline{\Gamma}$, fix a pre-image $g\in G$, and let $H=\{h_1,\ldots,h_n\}$. The restriction of $\Phi$ to the homogeneous component of degree $\bar g$ is given by:
\[
\begin{split}
L_\pi(\cA)_{\bar g}=\bigoplus_{i=1}^n L_\pi(\cA)_{gh_i}=\bigoplus_{i=1}^n\cA_{\bar g}\otimes gh_i&\longrightarrow \cA_{\bar g}\times \cdots\times \cA_{\bar g}\\
\sum_{i=1}^n a_i\otimes gh_i\ &\mapsto\sum_{i=1}^n\bigl(\chi_1(gh_i)a_i,\ldots,\chi_n(gh_i)a_i\bigr)\\
&\qquad =(a_1,\ldots,a_n)\Bigl(\chi_j(gh_i)\Bigr)_{1\leq i,j\leq n}
\end{split}
\]
where $a_1,\ldots,a_n\in\cA_{\bar g}$. 
The linear independence of characters shows that 
the matrix $\Bigl(\chi_j(h_i)\Bigr)_{1\leq i,j\leq n}$ is regular, and thus so is $\Bigl(\chi_j(gh_i)\Bigr)_{1\leq i,j\leq n}$. Hence $\Phi$ is bijective on each nonzero homogeneous component of the $\overline{G}$-gradings.
\end{proof}

\begin{remark}\label{re:loop_semisimple}
The proof of Theorem \ref{th:loop_semisimple} shows that for a loop algebra $L_\pi(\cA)$ of a central simple algebra $\cA$, with the kernel of $\pi$ being finite, $L_\pi(\cA)$ is a direct sum of simple algebras if and only if it contains no proper ideal with zero square. The two natural notions of semisimplicity  agree in this case.
\end{remark}

\smallskip

\section{Classification up to isomorphism}\label{se:up_to_isomorphism}

Theorems \ref{th:loop_properties} and \ref{th:loop_semisimple} allow us, given an abelian group $G$, to classify $G$-gradings, up to isomorphism, in finite-dimensional semisimple algebras over an algebraically closed field, by reducing the problem to the analogous problem for simple algebras.

\begin{theorem}\label{th:up_to_isomorphism}
Let $\FF$ be an algebraically closed ground field, and let $G$ be an abelian group.
\begin{enumerate}
\item
Let $(\cB,\Gamma,G,\delta)$ be a semisimple $G$-graded algebra, then $(\cB,\Gamma,G,\delta)$ is isomorphic, as a $G$-graded algebra, to a product $G$-grading $\bigl(\cB^1\times\cdots\times\cB^n,\Gamma^1\times_G\cdots\times_G\Gamma^n,G,\delta^1\times_G\cdots\times_G\delta^n\bigr)$ for some graded-simple and semisimple $G$-graded algebras $(\cB^i,\Gamma^i,G,\delta^i)$, $i=1,\ldots,n$. The factors $(\cB^i,\Gamma^i,G,\delta^i)$ are uniquely determined up to reordering and $G$-graded isomorphisms.

\item 
Any finite-dimensional graded-simple $G$-graded algebra $(\cB,\tilde\Gamma,G,\tilde\delta)$ is isomorphic, as a $G$-graded algebra, to the loop algebra  $(L_\pi(\cA),\Gamma,G,\delta)$ associated to a surjective group homomorphism $\pi:G\rightarrow \overline{G}$ with finite kernel $H$, and  a central simple $\overline{G}$-graded algebra $(\cA,\overline{\Gamma},\overline{G},\bar\delta)$. The algebra $\cA$ is a quotient of $\cB$.

Moreover, in this situation $\cB$ is semisimple if and only if $\chr\FF$ does not divide the order of $H$.

\end{enumerate}
\end{theorem}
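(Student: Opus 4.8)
For part (1), the plan is to reduce to the structure of $\cB$ as an algebra. Since $\cB$ is semisimple, write $\cB = \cB^1 \oplus \cdots \oplus \cB^n$ as a direct sum of its simple ideals; this decomposition is unique up to reordering. The first step is to show each $\cB^i$ is a graded subalgebra, i.e. $\cB^i = \bigoplus_{g\in G}(\cB^i \cap \cB_g)$. For this I would use that the simple ideals are exactly the minimal ideals of $\cB$, and that the projections onto the factors lie in $\Mult(\cB)$ (or argue directly: a homogeneous element decomposes along the $\cB^j$, each piece generating a graded ideal); since the $\cB^j$ are the minimal ideals, each homogeneous piece must land in a single $\cB^j$, so the sum $\cB^1 \oplus\cdots\oplus\cB^n$ is compatible with $\Gamma$. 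Then $\Gamma^i \bydef \Gamma|_{\cB^i}$ with the induced degree map $\delta^i \bydef \delta|_{\cB^i}$ makes $(\cB^i,\Gamma^i,G,\delta^i)$ a $G$-graded algebra, and because $\cB_g = \bigoplus_i (\cB^i \cap \cB_g)$ for every $g$, the $G$-graded algebra $(\cB,\Gamma,G,\delta)$ is literally the product $G$-grading of the $(\cB^i,\Gamma^i,G,\delta^i)$'s (Definition \ref{df:G_product}). Each $\cB^i$ is graded-simple: a proper graded ideal of $\cB^i$ would be a graded ideal of $\cB$ properly contained in the minimal ideal $\cB^i$, impossible. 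It is semisimple because it is simple. Uniqueness follows because any product-$G$-grading decomposition $\cB = \cC^1\times\cdots\times\cC^m$ into graded-simple factors realizes the $\cC^j$ as graded ideals; being graded-simple they are minimal graded ideals, hence (for a semisimple algebra, where minimal graded ideals among those with nonzero square are minimal ideals after one notes each $\cC^j$, being simple-or-a-product, is actually simple here since it is a direct factor of a semisimple algebra) they coincide with the $\cB^i$ up to order; the induced gradings then match.

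For part (2), the idea is to feed $(\cB,\tilde\Gamma,G,\tilde\delta)$ into Theorem \ref{th:loop_properties}.(4)–(5) and Theorem \ref{th:loop_semisimple}. First one must arrange the graded-central hypothesis: replacing $G$ by the universal group $U(\tilde\Gamma)$ changes nothing essential, but more to the point, a finite-dimensional graded-simple algebra always becomes graded-central after passing to a suitable realization — its graded centroid $C(\cB)_e$ is a finite-dimensional commutative associative algebra over $\FF$ acting faithfully and, since $\FF$ is algebraically closed and $\cB$ is graded-simple, $C(\cB)_e$ is a field extension of $\FF$, hence $C(\cB)_e = \FF 1$. (Here I use Theorem \ref{th:loop_properties}.(1): the centroid is $G$-graded, and $C(\cB)_e$ is a domain because $\cB$ is graded-simple, finite-dimensional over the algebraically closed $\FF$.) So $(\cB,\tilde\Gamma,G,\tilde\delta)$ is graded-central-simple. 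Now apply Theorem \ref{th:loop_properties}.(5) with $H = \supp_G(\Gamma_{C(\cB)})$, which is a finite subgroup of $G$ (finite since $C(\cB)$ is finite-dimensional) by Theorem \ref{th:loop_properties}.(2), and $\pi:G\to \overline{G} = G/H$: this produces a central simple $\overline{G}$-graded algebra $(\cA,\overline{\Gamma},\overline{G},\bar\delta)$ with $(\cB,\tilde\Gamma,G,\tilde\delta) \cong (L_\pi(\cA),\Gamma,G,\delta)$ and $\cA$ a quotient of $\cB$. Finally, the "semisimple iff $\chr\FF \nmid |H|$" clause is exactly Theorem \ref{th:loop_semisimple} applied to this loop-algebra presentation, noting $|H| = \dim_\FF C(\cB)$.

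The main obstacle, and the only genuinely non-routine point, is the verification in part (1) that the decomposition of $\cB$ into simple ideals is compatible with the grading $\Gamma$ — equivalently, that each homogeneous component $\cB_g$ splits as $\bigoplus_i(\cB_g\cap\cB^i)$. The subtlety is that we are in a fully general (not necessarily associative) variety, so "simple ideal" must be handled via minimal ideals and the multiplication algebra rather than via idempotents; one shows the projection $\cB\to\cB^i$ is $\Mult(\cB)$-linear hence sends $\cB_g$ into the $\Gamma$-component it came from, which forces $\cB_g\cap\cB^i$ to exhaust $\cB_g$. A secondary, smaller point is confirming in part (2) that $C(\cB)_e$ is a field: this needs $\cB$ graded-simple and finite-dimensional so that $C(\cB)_e$ is a commutative domain finite over $\FF$, hence (as $\FF = \overline{\FF}$) equal to $\FF$. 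Once these are in place, everything else is assembly of the cited results from Section \ref{se:loop}.
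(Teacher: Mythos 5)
Your argument for part (2) is essentially the paper's (graded-simple plus finite-dimensional over an algebraically closed field forces $C(\cB)_e=\FF 1$, then invoke Theorem \ref{th:loop_properties}.(4) — not (5), which is the isomorphism criterion — together with Theorem \ref{th:loop_semisimple}), and that part is fine. Part (1), however, contains a genuine error. You claim that each \emph{simple} ideal $\cB^i$ of the semisimple algebra $\cB$ is a graded ideal, i.e.\ that every homogeneous component splits as $\cB_g=\bigoplus_i(\cB_g\cap\cB^i)$. This is false, and in fact contradicts the main point of the paper: by Theorem \ref{th:loop_properties}.(3) and Theorem \ref{th:loop_semisimple}, a loop algebra $L_\pi(\cA)$ of a central simple graded algebra with $1<\lvert H\rvert$ not divisible by $\chr\FF$ is graded-simple yet isomorphic to the product of $\lvert H\rvert$ copies of $\cA$; its simple ideals cannot be graded, or graded-simplicity would fail. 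Concretely, for the grading \eqref{eq:Gamma_LGhg} on $\cL=\frsl_2\times\frsl_2$ the homogeneous components are $\FF(H,H)$, $\FF(H,-H)$, $\FF(E,\pm E)$, $\FF(F,\pm F)$, and the simple ideals $\frsl_2\times 0$ and $0\times\frsl_2$ meet every component trivially. The step that fails in your argument is the assertion that the projection $\cB\to\cB^i$, being $\Mult(\cB)$-linear, preserves degrees: centroid elements need not be homogeneous of degree $e$; only $C(\cB)_e$ preserves the components, and by Theorem \ref{th:loop_properties}.(2) the centroid of a graded-simple semisimple algebra can be $\FF H$ with $H$ nontrivial, so the projection idempotents are genuinely non-homogeneous. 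The same misconception undermines your uniqueness argument, where you assert that a graded-simple direct factor of a semisimple algebra is simple.

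This is why the theorem's factors are only claimed to be graded-simple \emph{and semisimple}, not simple, and why the paper's proof proceeds differently: first one shows every ideal of $\cB$ is a sum of the simple ideals $\cJ^1,\ldots,\cJ^m$ (if $\cJ\cap\cJ^i\neq 0$ then $\cJ^i\subseteq\cJ$ by simplicity), so $\cB$ has only finitely many ideals and every nonzero ideal has nonzero square; a minimal graded ideal $\cJ$ is then graded-simple, its annihilator $\cJ'=\{x\in\cB: x\cJ=0=\cJ x\}$ is the sum of the remaining simple ideals and is graded, giving $\cB=\cJ\oplus\cJ'$, and one iterates. The resulting graded-simple summands are sums of possibly several simple ideals — exactly the situation that part (2) and the loop-algebra machinery are designed to handle. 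Your proposal as written would make part (2) nearly vacuous for semisimple $\cB$ (it would force $H$ trivial), so the gap is not repairable along the route you chose; you need the paper's decomposition into graded-simple, rather than simple, ideals.
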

\begin{proof}
Let $(\cB,\Gamma,G,\delta)$ be a semisimple $G$-graded algebra. Then $\cB$ is a finite direct sum of simple ideals: $\cB=\bigoplus_{i=1}^m\cJ^i$. Thus $\cB$ contains only a finite number of ideals, as each ideal is a sum of some of the $\cJ^i$'s: If $\cJ$ is a nonzero ideal of $\cB$, any element $x\in\cJ$ can be written uniquely as $x=x_1+\cdots +x_m$ with $x_i\in\cJ^i$, $i=1,\ldots,m$. If $x_i\neq 0$, then by simplicity of $\cJ^i$, $0\neq x_i\cJ^i+\cJ^ix_i=x\cJ^i+\cJ^ix\subseteq \cJ\cap\cJ^i$, so $\cJ\cap\cJ^i$ is a nonzero ideal of the simple algebra $\cJ^i$, and hence $\cJ^i\subseteq \cJ$. If $(\cB,\Gamma,G,\delta)$ is not graded-simple, then it contains a proper graded-simple ideal $\cJ=\cJ^{i_1}\oplus\cdots\oplus\cJ^{i_r}$, for some $1\leq i_1<\cdots<i_r\leq m$. But then 
\[
\cJ'\bydef\bigoplus_{i\not\in\{i_1,\ldots,i_r\}}\cJ^i\ =\{x\in\cB: x\cJ=0=\cJ x\}
\]
is a graded ideal too, and $\cB=\cJ\oplus\cJ'$. If the semisimple $G$-graded ideal $\cJ'$ is not graded simple we apply the same argument to it. It follows that $\cB$ is the direct sum of its graded-simple ideals, each of which is a direct sum of some of its simple ideals. This proves the first part. (Note that this part does not require the ground field to be algebraically closed or the algebra to be finite-dimensional.)

Now, if $(\cB,\Gamma,G,\delta)$ is a finite-dimensional graded-simple and semisimple $G$-graded algebra, the neutral component of its centroid $C(\cB)_e$ is a finite field extension of $\FF$, and since this is algebraically closed, $C(\cB)_e=\FF 1$ and $(\cB,\Gamma,G,\delta)$ is graded-central-simple. Part (4) of Theorem \ref{th:loop_properties} and Theorem \ref{th:loop_semisimple} prove part (2).
\end{proof}

 Theorem \ref{th:loop_properties}.(5) gives the conditions for isomorphism of the graded-simple $G$-algebras in Theorem \ref{th:up_to_isomorphism}.(2).

\begin{example}\label{ex:sl2xsl2_iso}
Let $\FF$ be an algebraically closed field of characteristic not $2$, and let $G$ be an abelian group.
\cite[Theorem 3.49]{EKmon} shows that any $G$-grading on $\frsl_2$ is isomorphic to one of the following gradings:
\begin{itemize}
\item $\Gamma_{\frsl_2}^1(G,g)$ for an element $g\in G$, determined by $\deg(E)=g$, $\deg(H)=e$ and $\deg(F)=g^{-1}$. (If $g=e$, this is the trivial grading.)
\item $\Gamma_{\frsl_2}^2(G,T)$ for a subgroup $T$ of $G$ isomorphic to $\left(\ZZ/2\right)^2$, determined by $\deg(H)=a$, $\deg(E)=b$, $\deg(F)=c$, where $a,b,c$ are the nontrivial elements of $T$.
\end{itemize}
Moreover,  gradings of different types are not isomorphic and 
\begin{itemize}
\item $\Gamma_{\frsl_2}^1(G,g)$ is isomorphic to $\Gamma_{\frsl_2}^1(G,g')$ if and only if $g'\in\{g,g^{-1}\}$.
\item $\Gamma_{\frsl_2}^2(G,T)$ is isomorphic to $\Gamma_{\frsl_2}^2(G,T')$ if and only if $T=T'$.
\end{itemize}

Theorems \ref{th:up_to_isomorphism} and \ref{th:loop_semisimple} show that given any abelian group $G$, any $G$-grading on $\cL=\frsl_2\times\frsl_2$ making it a graded-simple algebra (i.e., the two copies of $\frsl_2$ are not graded ideals) is isomorphic to the grading of a loop algebra $(L_\pi(\frsl_2),\Gamma,G,\delta)$, where $\pi:G\rightarrow \overline{G}$ is a surjective group homomorphism with $\ker\pi$ of order $2$: $\ker\pi=\langle h\rangle$, $h$ of order $2$, obtained from a grading  $(\frsl_2,\overline{G},\overline{\Gamma},\bar\delta)$. The loop algebra is isomorphic to $\cL$ by means of the isomorphism in Equation \eqref{eq:LpiA_An}, which allows us to transfer easily the grading on $L_\pi(\frsl_2)$ to $\cL$.

If $\overline{\Gamma}$ is isomorphic to $\Gamma_{\frsl_2}^1(\overline{G},\bar g)$, for some $\bar g\in\overline{G}$, the corresponding grading on $\cL$ will be denoted by $\Gamma_{\cL}^1(G,h,\bar g)$, while if $\overline{\Gamma}$ is isomorphic to $\Gamma_{\frsl_2}^2(\overline{G},\overline{T})$ for a subgroup $\overline{T}$ of $\overline{G}$ isomorphic to $\left(\ZZ/2\right)^2$, the corresponding grading on $\cL$ will be denoted by $\Gamma_{\cL}^2(G,h,\overline{T})$. Then Theorem \ref{th:loop_properties}.(5) shows  that 
\begin{itemize}
\item $\Gamma_{\cL}^1(G,h,\bar g)$ is isomorphic to $\Gamma_{\cL}^1(G,h',\bar g')$ if and only if $h=h'$ and $\bar g'\in\{\bar g,\bar g^{-1}\}$.
\item $\Gamma_{\cL}^2(G,h,\overline{T})$ is isomorphic to $\Gamma_{\cL}^2(G,h',\overline{T}')$ if and only if $h=h'$ and $\overline{T}=\overline{T}'$.
\item A grading $\Gamma_{\cL}^1(G,h,\bar g)$ is never isomorphic to a grading $\Gamma_{\cL}^2(G,h,\overline{T})$.
\end{itemize}

The gradings $\Gamma_{\cL}^1(G,h,\bar g)$ are quite simple to describe if the surjective group homomorphism $\pi:\pi^{-1}(\langle\bar g\rangle)\rightarrow \langle \bar g\rangle$ splits. That is, if there is an element $g\in G$ with $\pi(g)=\bar g$ and $h$ does not belong to the subgroup generated by $g$. In this case $\pi^{-1}(\langle \bar g\rangle)$ is the direct product of the subgroups $\langle g\rangle$ and $\langle h\rangle$. In this situation, the nontrivial character on $\ker\pi=\langle h\rangle$ ($\chi(h)=-1$) extends to a  character $\chi$ on $G$ with $\chi(g)=1$. The isomorphism in \eqref{eq:LpiA_An} becomes here the isomorphism
\[
\begin{split}
\Phi:L_\pi(\frsl_2)&\longrightarrow \cL=\frsl_2\times\frsl_2\\
      x\otimes f&\mapsto \ \bigl(x,\chi(f)x\bigr)
\end{split}
\]
for $f\in G$ and  $x\in \left(\frsl_2\right)_{\pi(f)}$. Thus the $G$-grading $\Gamma_{\cL}^1(G,h,\bar g)$ is determined by:
\begin{equation}\label{eq:Gamma_LGhg}
\begin{array}{ll}
\deg(H,H)=e,\qquad& \deg(H,-H)=h,\\
 \deg(E,E)=g,& \deg(E,-E)=gh,\\
  \deg(F,F)=g^{-1},& \deg(F,-F)=g^{-1}h.
\end{array}
\end{equation}

In the same vein, the gradings $\Gamma_{\cL}^2(G,h,\overline{T})$ are very easy to describe in case $\pi^{-1}(\overline{T})$ is a $2$-elementary subgroup (of order $8$). That is, if there are order $2$ elements $a,b\in G$, such that $\pi^{-1}(\overline{T})=\langle h,a,b\rangle$. In this situation, the character $\chi$ can be taken to satisfy $\chi(a)=\chi(b)=1$, and the $G$-grading $\Gamma_{\cL}^2(G,h,\overline{T})$ is given by:
\begin{equation}\label{eq:Gamma_L2GhT}
\begin{array}{ll}
\cL_a= \FF(H,H), & \cL_{ah}= \FF(H,-H),\\
\cL_b= \FF(E+F,E+F), & \cL_{bh}= \FF(E+F,-(E+F)),\\
\cL_{ab}= \FF(E-F,E-F), & \cL_{abh}= \FF(E-F,-(E-F)).
\end{array}
\end{equation}

In Example \ref{ex:sl2.cartesian.product} there appears the case in which $\pi^{-1}(\overline{T})$ is not $2$-elementary abelian, and hence it is isomorphic to $\ZZ/4\times\ZZ/2$.
\end{example}

\smallskip

\section{Fine group-gradings on semisimple algebras}\label{se:fine}

Fine (general) gradings behave very well with respect to cartesian products:

\begin{proposition}\label{pr:fine_general}
Let $\Gamma$ be a fine grading on an algebra which is a direct sum of graded ideals $\cB=\cB^1\oplus\cdots\oplus\cB^n$. Let $\Gamma^i=\Gamma\vert_{\cB^i}$ be the induced grading on $\cB^i$ for each $i=1,\ldots,n$.  Then each $\Gamma^i$ is a fine grading, and $\Gamma$ is equivalent to the product grading $\Gamma\vert_{\cB^1}\times\cdots\times\Gamma\vert_{\cB^n}$ on $\cB^1\times\cdots\times\cB^n$ (naturally isomorphic to $\cB$).

Conversely, let $\Gamma^i$ be a fine grading on $\cB^i$, for $i=1,\ldots,n$, then the product grading $\Gamma^1\times\cdots\times\Gamma^n$ is a fine grading on $\cB^1\times\cdots\times\cB^n$.
\end{proposition}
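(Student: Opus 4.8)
The plan is to prove the two directions of Proposition \ref{pr:fine_general} separately, in both cases reducing to the basic combinatorial fact that a grading on a cartesian product $\cB^1\times\cdots\times\cB^n$ whose homogeneous components all lie inside a single factor is the same thing as a choice of grading on each factor, and that refinements split accordingly. First I would fix notation: write $\cB=\cB^1\oplus\cdots\oplus\cB^n$ with each $\cB^i$ a graded ideal, so every homogeneous component $\cU\in\Gamma$ decomposes as $\cU=\bigoplus_i(\cU\cap\cB^i)$; a homogeneous component of a \emph{fine} grading, however, cannot itself be decomposed as a direct sum of two nonzero graded subspaces in a coarsening-free way — more precisely, I would first observe that if $\Gamma$ is fine then each $\cU\in\Gamma$ is contained in a single $\cB^i$. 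Indeed, if $\cU$ met two factors, the partition of $\Gamma$ refining each such $\cU$ into its nonzero pieces $\cU\cap\cB^i$ would be a grading (the product structure of the ideals guarantees closure under multiplication, since $\cB^i\cB^j=0$ for $i\neq j$), and it would properly refine $\Gamma$, contradicting fineness. This immediately identifies $\Gamma$ with the product grading $\Gamma\vert_{\cB^1}\times\cdots\times\Gamma\vert_{\cB^n}$ under the natural isomorphism $\cB\cong\cB^1\times\cdots\times\cB^n$, and shows the $\Gamma^i=\Gamma\vert_{\cB^i}$ are genuine gradings on $\cB^i$.

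Next I would show each $\Gamma^i$ is fine. Suppose some $\Gamma^j$ admits a proper refinement $\Delta^j$ on $\cB^j$. Then I would build a refinement of $\Gamma$ on $\cB$ by replacing the block of components $\{\cU\in\Gamma:\cU\subseteq\cB^j\}$ by $\Delta^j$ and keeping all other $\Gamma^i$ untouched; concretely this is $\Gamma^1\times\cdots\times\Delta^j\times\cdots\times\Gamma^n$. One checks this is a grading on $\cB$ (again using $\cB^i\cB^k=0$ for $i\neq k$, and that $\Delta^j$ is a grading of $\cB^j$), and it properly refines $\Gamma$ since $\Delta^j$ properly refines $\Gamma^j$. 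This contradicts fineness of $\Gamma$, so every $\Gamma^i$ is fine.

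For the converse, let each $\Gamma^i$ be fine on $\cB^i$ and let $\Gamma'$ be a refinement of the product grading $\Gamma=\Gamma^1\times\cdots\times\Gamma^n$ on $\cB=\cB^1\times\cdots\times\cB^n$. Each homogeneous component of $\Gamma'$ is contained in some component of $\Gamma$, which in turn lies in a single factor $\cB^i$ (this is how the product grading is built); hence each $\cB^i$ is a graded subalgebra for $\Gamma'$, in fact a graded ideal, and $\Gamma'$ is the product of the induced gradings $\Gamma'\vert_{\cB^i}$. Each $\Gamma'\vert_{\cB^i}$ is a refinement of $\Gamma^i$, hence equals $\Gamma^i$ by fineness. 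Therefore $\Gamma'=\Gamma$, and $\Gamma$ is fine.

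The routine verifications — that the partitioned collections are closed under the multiplication, i.e.\ really are gradings in the sense of Definition \ref{df:grading} — all come down to the orthogonality $\cB^i\cB^j=0$ for $i\neq j$ together with the fact that $\Gamma^i$ (or $\Delta^j$) already is a grading of $\cB^i$, so products of homogeneous pieces stay within a single factor and within a single component there. The only genuine point, and the one I would state most carefully, is the first observation: that fineness forces each homogeneous component into a single ideal $\cB^i$. Everything else is bookkeeping built on that and on the definition of the product grading. I do not expect a serious obstacle here; the proposition is a soft structural statement, and the main care is simply to phrase the "splitting a component across factors gives a proper refinement" argument cleanly, and to make sure the product grading is invoked in exactly the form of Definition \ref{df:product_grading}.
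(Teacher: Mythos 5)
Your proposal is correct and follows essentially the same route as the paper: the key step in both is that splitting each homogeneous component into its pieces $\cU\cap\cB^i$ (i.e.\ passing to $\bigcup_i\Gamma\vert_{\cB^i}$) gives a refinement of $\Gamma$, which by fineness must equal $\Gamma$, after which the fineness of each $\Gamma^i$ and the converse follow by the same block-replacement and splitting arguments the paper treats as immediate. You simply spell out in more detail the verifications (orthogonality $\cB^i\cB^j=0$, the converse splitting) that the paper leaves as "clear."
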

\begin{proof}
The grading $\tilde\Gamma=\cup_{i=1}^n\Gamma\vert_{\cB^i}=\cup_{i=1}^n\Gamma^i$ is a refinement of $\Gamma$ so, as $\Gamma$ is fine, they are equal. Under the natural isomorphism $\cB\cong \cB^1\times\cdots\times\cB^n$, $\tilde\Gamma$ becomes the product grading $\Gamma^1\times\cdots\times\Gamma^n$. Besides, any refinement of $\Gamma^i$ for an index $i$ gives a refinement of the union $\Gamma$, and hence each $\Gamma^i$ is fine. 

The converse is clear.
\end{proof}

However, for fine group-gradings, only one direction works:

\begin{proposition}\label{pr:fine_group}
Let $\Gamma$ be a fine group-grading on an algebra which is a direct sum of graded ideals: $\cB=\cB^1\oplus\cdots\oplus\cB^n$. Let $\Gamma^i=\Gamma\vert_{\cB^i}$ be the induced grading on $\cB^i$ for each $i=1,\ldots,n$.
Then each $\Gamma^i$ is a fine group-grading, and $\Gamma$ is equivalent to the free product group-grading $\bigl(\Gamma^1\times\cdots\times\Gamma^n\bigr)_{\mathrm{gr}}$ on $\cB^1\times\cdots\times\cB^n$ (naturally isomorphic to $\cB$).
\end{proposition}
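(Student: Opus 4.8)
The plan is to establish the two assertions in turn. Throughout I will use two elementary observations. First, since $\cB=\cB^1\oplus\cdots\oplus\cB^n$ is a direct sum of ideals, $\cB^i\cB^j,\cB^j\cB^i\subseteq\cB^i\cap\cB^j=0$ for $i\neq j$, so $\cB$ is, as an algebra, the cartesian product of the $\cB^i$. Second, realizing the group-grading $\Gamma$ by its universal group $U:=U(\Gamma)$, so that $\cB=\bigoplus_{u\in U}\cB_u$, the hypothesis that each $\cB^i$ is a graded ideal gives $\cB^i=\bigoplus_{u\in U}\cB^i_u$ with $\cB^i_u:=\cB^i\cap\cB_u$, and moreover $\cB_u=\bigoplus_i\cB^i_u$ for every $u$; in particular $\Gamma^i$ is realized as a group-grading by $U$ via $\cB^i_u\mapsto u$.

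To prove that each $\Gamma^i$ is a fine group-grading I would argue by contradiction. Suppose $\tilde\Gamma^i$ is a group-grading on $\cB^i$ properly refining $\Gamma^i$, realized by its universal group $K$. Because $\cB^i\cdot\bigl(\bigoplus_{j\neq i}\cB^j\bigr)=0=\bigl(\bigoplus_{j\neq i}\cB^j\bigr)\cdot\cB^i$, I may form the product group-grading (Definition \ref{df:product_group_gradings}) of $\tilde\Gamma^i$ on $\cB^i$ and of the induced group-grading $\Gamma\vert_{\bigoplus_{j\neq i}\cB^j}$ on the graded ideal $\bigoplus_{j\neq i}\cB^j$; this is an honest group-grading $\hat\Gamma$ on $\cB$, by the group $K\times U$. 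Every homogeneous component of $\hat\Gamma$ is either a $\tilde\Gamma^i$-component --- contained in a $\Gamma^i$-component, hence in some $\cB_u\in\Gamma$ --- or of the form $\bigoplus_{j\neq i}\cB^j_u\subseteq\cB_u$ (the neutral component of $\hat\Gamma$ being the sum of the neutral $\tilde\Gamma^i$-component and $\bigoplus_{j\neq i}\cB^j_e$, still inside $\cB_e$); thus $\hat\Gamma$ refines $\Gamma$. The refinement is proper: a $\Gamma^i$-component $\cB^i_v$ which splits into at least two $\tilde\Gamma^i$-components still contributes at least two distinct homogeneous components of $\hat\Gamma$, all contained in the single $\Gamma$-component $\cB_v$, so the canonical surjection $\hat\Gamma\to\Gamma$ is not a bijection. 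This contradicts $\Gamma$ being a fine group-grading, so each $\Gamma^i$ is a fine group-grading.

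For the second assertion, consider the grading $\tilde\Gamma:=\bigcup_{i=1}^n\Gamma^i$ on $\cB$; it is a grading, since the $\Gamma^i$-components sum directly to $\cB$ and, for components $\cU\in\Gamma^i$, $\cV\in\Gamma^j$, the product $\cU\cV$ is either $0$ (if $i\neq j$) or contained in a $\Gamma^i$-component (if $i=j$, as $\Gamma^i$ is a grading). Under the natural identification $\cB\cong\cB^1\times\cdots\times\cB^n$, $\tilde\Gamma$ becomes precisely the product grading $\Gamma^1\times\cdots\times\Gamma^n$ of Definition \ref{df:product_grading}, and hence $(\tilde\Gamma)_{\mathrm{gr}}$ corresponds to $(\Gamma^1\times\cdots\times\Gamma^n)_{\mathrm{gr}}$, the free product group-grading. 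Now $\tilde\Gamma$ is a refinement of $\Gamma$ (each of its components is of the form $\cV\cap\cB^i$ with $\cV\in\Gamma$), and $\Gamma=\Gamma_{\mathrm{gr}}$ is a group-grading; by Theorem \ref{th:grading_group-grading} there is a group homomorphism $U(\tilde\Gamma)\to U(\Gamma)$ realizing $\Gamma$ as the coarsening of $(\tilde\Gamma)_{\mathrm{gr}}$ that it induces. In particular $(\tilde\Gamma)_{\mathrm{gr}}$ is a group-grading refining $\Gamma$, so, since $\Gamma$ is fine among group-gradings, $(\tilde\Gamma)_{\mathrm{gr}}=\Gamma$. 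Finally, the identification $\cB\cong\cB^1\times\cdots\times\cB^n$ is an equivalence $(\cB,\tilde\Gamma)\to(\cB^1\times\cdots\times\cB^n,\Gamma^1\times\cdots\times\Gamma^n)$, hence by Theorem \ref{th:equivalence_groupgradings} also an equivalence of the induced group-gradings; therefore $\Gamma=(\tilde\Gamma)_{\mathrm{gr}}$ is equivalent to $(\Gamma^1\times\cdots\times\Gamma^n)_{\mathrm{gr}}$, as claimed.

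The only genuinely delicate point is the first assertion, and specifically the choice of refinement $\hat\Gamma$: one cannot take the naive ``union'' $\tilde\Gamma^i\cup\bigcup_{j\neq i}\Gamma^j$, since --- as Example \ref{ex:FxF} shows --- unions of group-gradings need not be group-gradings. The fix is to coarsen the ``outside'' factor, lumping the subspaces $\cB^j_u$ ($j\neq i$) with a common $u$ into a single homogeneous component; equivalently, to use the product \emph{group}-grading of Definition \ref{df:product_group_gradings} rather than the product grading of Definition \ref{df:product_grading}. This keeps $\hat\Gamma$ a group-grading while still making it a proper refinement of $\Gamma$. The rest is routine manipulation with the universal-group formalism of Theorems \ref{th:universal} and \ref{th:grading_group-grading}.
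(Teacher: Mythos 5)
Your proposal is correct and follows essentially the paper's approach: the equivalence with the free product group-grading is obtained exactly as in the paper (via Theorems \ref{th:grading_group-grading} and \ref{th:equivalence_groupgradings}), while your fineness argument simply makes explicit --- through the product group-grading of a hypothetical proper refinement $\tilde\Gamma^i$ with $\Gamma\vert_{\bigoplus_{j\neq i}\cB^j}$ --- the step the paper states without detail, namely that a proper group-refinement of some $\Gamma^i$ yields a proper group-refinement of the whole grading. The one detail you assert rather than prove, that the neutral component of $\tilde\Gamma^i$ lies inside $\cB_e$ (so that the neutral component of $\hat\Gamma$ is contained in a single component of $\Gamma$), does follow from the group homomorphism $U(\tilde\Gamma^i)\rightarrow U(\Gamma)$ provided by Theorem \ref{th:grading_group-grading}, so there is no genuine gap.
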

\begin{proof} 
Through the natural isomorphism $\cB\cong\cB^1\times\cdots\times\cB^n$, $\Gamma$ becomes a grading on $\cB^1 \times\cdots\times\cB^n$ which is a coarsening of the free product group-grading $\bigl(\Gamma^1\times\cdots\times\Gamma^n\bigr)_{\mathrm{gr}}$  by Theorem \ref{th:grading_group-grading}. Since $\Gamma$ is fine, they are equivalent. Again, any group-grading that properly refines any of the $\Gamma^i$, gives a proper refinement of $\bigl(\Gamma^1\times\cdots\times\Gamma^n\bigr)_{\mathrm{gr}}$. Since this is a fine group-grading, it follows that each $\Gamma^i$ is a fine group-grading.
\end{proof}

\begin{example}\label{ex:FxFtri}
Assume that the characteristic of $\FF$ is not $2$. The trivial group-grading on $\FF\times\FF$ is the product group-grading of the trivial group-grading on each copy of $\FF$. Observe that the trivial grading on $\FF$ is fine. However,  as observed in Remark \ref{rk:fine_group_grading}, the grading by  the cyclic group of order $2$: $C_2=\{e,g\}$, with $(\FF\times\FF)_e=\FF(1,1)$ and $(\FF\times\FF)_g=\FF(1,-1)$, shows that the trivial grading on $\FF\times\FF$ is not fine. Hence, in general, the free product group-grading of fine group-gradings is not necessarily a fine group-grading.
\end{example}

There appears a natural question:

\begin{center}
\emph{Under what conditions the free product group-grading of fine group-gradings is a fine group-grading?}
\end{center}

For nontrivial fine group-gradings on graded-simple algebras (and this is the situation for finite-dimensional semisimple Lie algebras) the answer is easy:

\begin{theorem}\label{th:fine_nontrivial}
For $i=1,\ldots,n$, let $\Gamma^i$ be a fine group-grading on an algebra $\cB^i$ such that $(\cB^i,\Gamma^i)$ is graded-simple and $\Gamma^i$ is not trivial. Then the free product group-grading $(\Gamma^1\times\cdots\times\Gamma^n)_{\mathrm{gr}}$ on $\cB^1\times\cdots\times\cB^n$ is a fine group-grading.
\end{theorem}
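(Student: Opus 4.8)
The plan is to show directly that the free product group-grading $\Gamma := (\Gamma^1\times\cdots\times\Gamma^n)_{\mathrm{gr}}$ admits no proper refinement in the class of group-gradings, by analyzing how any such refinement $\Gamma'$ restricts to the graded ideals $\cB^i$. First I would set $\cB = \cB^1\times\cdots\times\cB^n$ and recall from Theorem~\ref{th:product_group_grading} that the universal group of $\Gamma$ is $U(\Gamma^1)\times\cdots\times U(\Gamma^n)$, with the homogeneous components as described in Definition~\ref{df:product_group_gradings} (one "big" neutral component $\cB^1_e\times\cdots\times\cB^n_e$, and the components $0\times\cdots\times\cU\times\cdots\times 0$ for $\cU\in\Gamma^i$ nonneutral). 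Suppose $\Gamma'$ is a group-grading on $\cB$ refining $\Gamma$, with universal group $(K,\delta')$; by Theorem~\ref{th:universal} there is a homomorphism $\varphi:K\to U(\Gamma)$ with $\varphi\delta' = \delta_\Gamma^U$. The key structural point is that each $\cB^i$, being a graded ideal for $\Gamma$, is also a graded ideal for the finer grading $\Gamma'$ (a refinement of a grading that has $\cB^i$ as a graded subspace still has $\cB^i$ as a graded subspace, since $\cB^i$ is a sum of $\Gamma$-components hence a sum of $\Gamma'$-components). So $\Gamma'\vert_{\cB^i}$ is a group-grading on $\cB^i$ refining $\Gamma^i$, and since $\Gamma^i$ is a fine group-grading, $\Gamma'\vert_{\cB^i} = \Gamma^i$ up to the natural identification; in particular $\Gamma'$ already agrees with $\Gamma$ on each $\cB^i$.

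The remaining issue is the neutral component: a priori $\Gamma'$ could split $\cB_e^{\Gamma} = \cB^1_e\times\cdots\times\cB^n_e$ into finer pieces not coming from any single $\cB^i$, i.e. "mixed" homogeneous subspaces. This is where nontriviality of the $\Gamma^i$ and graded-simplicity are used. The plan is: since $\Gamma'\vert_{\cB^i} = \Gamma^i$ and $\Gamma^i$ is nontrivial, there is a nonzero homogeneous element $x_i\in\cB^i$ of $\Gamma^i$-degree $\ne e$; viewed in $\cB$, this $x_i$ is $\Gamma$-homogeneous of degree $(e,\dots,\delta^U_{\Gamma^i}(\text{its class}),\dots,e)\ne e$, and it is $\Gamma'$-homogeneous (since $\Gamma'$ refines $\Gamma$ on $\cB^i$ and we've shown they coincide there). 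Now for any $\Gamma'$-homogeneous $z\in\cB^\Gamma_e$ — which we must show lies in a single $\cB^i_e$ — consider products with the $x_i$'s: because $\cB^i$ is graded-simple, hence $\cB^i = (\cB^i)^2$, the component $\cB^i_e$ is "reached" by products of homogeneous elements of $\cB^i$, and one can argue that a mixed homogeneous subspace would be forced to have incompatible $\Gamma$-degrees. More precisely, I would use that in the product $\cB$, $\cB^i\cdot\cB^j = 0$ for $i\ne j$, so any homogeneous $z = (z_1,\dots,z_n)\in\cB_e^\Gamma$ with, say, $z_i\ne 0\ne z_j$, would satisfy $z\cB^j \subseteq \cB^j$ picking out only the $j$-th coordinate's action; combined with graded-simplicity of $\cB^j$ (so $z_j$ acts nontrivially or is hit nontrivially) and the fact that the relevant $\Gamma'$-degrees must be consistent with $\Gamma$-degrees in $U(\Gamma^1)\times\cdots\times U(\Gamma^n)$, one gets that the $\Gamma$-degree of $z$ would have to be simultaneously supported in the $i$-th and $j$-th factors — impossible for a nonzero element of $U(\Gamma)$ in the support of $\Gamma$, by the defining property of the product group-grading. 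Hence $z$ lies in a single $\cB^i$, so $\Gamma' = \bigcup_i \Gamma'\vert_{\cB^i} = \bigcup_i\Gamma^i = \Gamma$, and $\Gamma$ is fine.

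The main obstacle I expect is precisely the argument that no mixed homogeneous subspace can appear in the neutral component: the product-grading structure guarantees $\cB^i\cB^j = 0$, but a homogeneous subspace of $\cB_e^\Gamma$ need not be closed under projection to the factors a priori (that is exactly what is being refined), so one must leverage graded-simplicity to produce enough homogeneous elements witnessing the incompatibility of degrees. I would handle this by fixing, for each $i$, a nonzero homogeneous $x_i\in\cB^i$ of nonneutral degree and nonzero homogeneous $y_i\in\cB^i$ with $x_i y_i\ne 0$ or $y_i x_i \ne 0$ lying in $\cB^i$ (possible since $(\cB^i)^2 = \cB^i\ne 0$), then arguing that for a $\Gamma'$-homogeneous $z = (z_1,\dots,z_n)$ the nonvanishing of $z_i$ forces, via multiplication by suitable $\Gamma'$-homogeneous elements of $\cB^i$, that $z$ has a nonzero $\Gamma'$-degree component "in the $i$-direction", and this cannot happen for two distinct $i$ simultaneously because the target lives in $U(\Gamma^1)\times\cdots\times U(\Gamma^n)$ and the relevant $\Gamma$-degrees lie in distinct direct factors whose only common element is $e$, contradicting nontriviality. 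A cleaner alternative, which I would try first, is to invoke Theorem~\ref{th:fine_nontrivial}'s hypotheses to reduce to the two-factor case $n=2$ by induction and then argue the $n=2$ case by the degree-incompatibility analysis above; the inductive step is routine once the base case is established.
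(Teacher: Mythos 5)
There is a genuine gap, and it sits exactly at the step where the paper needs the hypotheses. You claim that each $\cB^i$ is a graded ideal for the refinement $\Gamma'$ ``since $\cB^i$ is a sum of $\Gamma$-components hence a sum of $\Gamma'$-components''. That justification is false in general: the components of the free product group-grading $\Gamma=(\Gamma^1\times\cdots\times\Gamma^n)_{\mathrm{gr}}$ are the \emph{single} neutral component $\cB^1_e\times\cdots\times\cB^n_e$ together with the subspaces $0\times\cdots\times\cB^i_u\times\cdots\times 0$ with $u\neq e$ (Theorem \ref{th:product_group_grading}), so $\cB^i$ is a sum of $\Gamma$-components only when $\cB^i_e=0$ or all the other $\cB^j_e$ vanish. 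For $\cL=\frsl_2\times\frsl_2$ with two Cartan gradings, $\frsl_2\times 0$ is not a sum of components, because $\FF H\times 0$ sits properly inside the neutral component $\FF H\times\FF H$. A priori a refinement $\Gamma'$ could split that neutral component ``diagonally'', and then $\cB^i$ would not be $\Gamma'$-graded and $\Gamma'\vert_{\cB^i}$ would not even be defined. This is precisely where nontriviality and graded-simplicity enter in the paper's proof: since $\Gamma^i$ is nontrivial, some $(\cB^i)_u$ with $u\neq e$ is an entire $\Gamma$-component contained in $\cB^i$, hence contains a nonzero $\Gamma'$-homogeneous element $x$; the ideal of $\cB$ generated by the $\Gamma'$-homogeneous element $x$ is a $\Gamma'$-graded ideal, and by graded-simplicity of $(\cB^i,\Gamma^i)$ it is all of $\cB^i$. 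That argument is missing from your proposal, which instead defers the use of the hypotheses to a later stage.

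The later stage is also misdirected. The intermediate claim you set out to prove --- that every $\Gamma'$-homogeneous $z$ in the neutral component $\cB^1_e\times\cdots\times\cB^n_e$ lies in a single $\cB^i_e$ --- is not what is needed, and it is actually false in the expected outcome $\Gamma'=\Gamma$ whenever at least two of the $\cB^i_e$ are nonzero (then the whole neutral component is one homogeneous component, full of ``mixed'' elements such as $(H,H)$). Correspondingly, your closing chain $\Gamma'=\bigcup_i\Gamma'\vert_{\cB^i}=\bigcup_i\Gamma^i=\Gamma$ conflates the product grading $\Gamma^1\times\cdots\times\Gamma^n$ with the free product group-grading: they differ exactly on the neutral component, and the product grading is generally not even a group-grading (Example \ref{ex:FxF}). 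What must be shown is the opposite statement, namely that $\Gamma'$ cannot split the neutral component at all. Once $\Gamma'\vert_{\cB^i}=\Gamma^i$ is established for every $i$ via the graded-ideal argument above, the clean finish is the paper's: $\Gamma'$ is then a group-grading coarsening the product grading, hence by Theorem \ref{th:grading_group-grading} (this is the content of Proposition \ref{pr:fine_group}) it is a coarsening of the free product group-grading $\Gamma$; since it also refines $\Gamma$, it equals $\Gamma$. Neither your multiplication/degree-incompatibility sketch nor the proposed induction on $n$ repairs this, because both are aimed at the false intermediate claim.
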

\begin{proof}
Identify $\cB^i$ with the ideal $0\times\cdots\times\cB^i\times\cdots\times 0$ of $\cB=\cB^1\times\cdots\times\cB^n$, so we may write $\cB=\cB^1\oplus\cdots\oplus\cB^n$. Let $(U^i,\delta^i)$ be the universal group of $\Gamma^i$, $i=1,\ldots,n$, and assume that there is a group-grading $\Gamma'$ on $\cB$ that refines $(\Gamma^1\times\cdots\times\Gamma^n)_{\mathrm{gr}}$.

As $\Gamma^i$ is not trivial, there is an element $e\neq u\in U^i$ such that $(\cB^i)_u\neq 0$. Then by Theorem \ref{th:product_group_grading}, the universal group of $(\Gamma^1\times\cdots\times\Gamma^n)_{\mathrm{gr}}$ is $U^1\times\cdots\times U^n$ and $(\cB^i)_u=\cB_{(e,\ldots,u,\ldots,e)}$. Since $\Gamma'$ refines $(\Gamma^1\times\cdots\times\Gamma^n)_{\mathrm{gr}}$, $(\cB^i)_u$ is a sum of homogeneous spaces of $\Gamma'$, so there is a nonzero element $x\in (\cB^i)_u$ which is homogeneous for $\Gamma'$. But by the graded-simplicity of $(\cB^i,\Gamma^i)$, the ideal of $\cB$ generated by the homogeneous element $x$ is $\cB^i$, and hence $\cB^i$ is a graded ideal for $\Gamma'$. Since $\Gamma^i$ is fine and $\Gamma'\vert_{\cB^i}$ refines it, we obtain $\Gamma'\vert_{\cB^i}=\Gamma^i$ for any $i$, and thus $\Gamma'=(\Gamma^1\times\cdots\times\Gamma^n)_{\mathrm{gr}}$  by Proposition \ref{pr:fine_group}.
\end{proof}

The next example shows that there are nontrivial examples of simple algebras for which the trivial grading is a fine group-grading. 

\begin{example}\label{ex:trivial_fine}
Let $\cA=\FF a\oplus\FF b$ be the algebra with $a^2=a$, $ab=b$, $ba=0$, and $b^2=a+b$. It is easy to see that $\cA$ is simple. For any $\cR$ in $\Alg_\FF$ and any automorphism $\varphi\in\Aut_\cR(\cA\otimes_\FF\cR)$, $\varphi(a)=a$ because $a$ is the only left unity of $\cA\otimes_\FF\cR$. If $\varphi(b)=ra+sb$, $r,s\in\cR$, then from $0=\varphi(ba)=\varphi(b)a=(ra+sb)a=ra$ we obtain $r=0$. Now $\varphi(b^2)=\varphi(a+b)=a+sb$, while $\varphi(b)^2=s^2(a+b)$, so $s^2=1=s$ and $\varphi$ is the identity. Therefore the affine group scheme $\AAut_\FF(\cA)$ is trivial, and hence the only group-grading is the trivial one.
\end{example}

The situation in Theorem \ref{th:fine_nontrivial} changes in presence of simple algebras admitting no nontrivial group-gradings, that is, simple algebras for which the trivial grading is fine. We first need a previous result.

\begin{lemma}\label{le:trivial}
Let $\cA$ be a central simple algebra over an algebraically closed field $\FF$ with no nontrivial group-gradings. Then the trivial grading on $\cA\times\cdots\times\cA$ ($n\geq 2$ copies of $\cA$) is a fine group-grading if and only if $n=2$ and $\chr\FF=2$.
\end{lemma}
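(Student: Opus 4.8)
The plan is to use the structural results already available for loop algebras together with the formula for the automorphism group scheme of a direct product of simple algebras. First I would analyze $\mathcal{A}\times\cdots\times\mathcal{A}$ ($n$ copies): since $\mathcal{A}$ is central simple with no nontrivial group-gradings, any $G$-grading $\Gamma'$ on $\cB=\mathcal{A}^{\times n}$ which is graded-simple must, by Theorem \ref{th:up_to_isomorphism}.(2) and Theorem \ref{th:loop_semisimple}, arise as a loop algebra $L_\pi(\mathcal{A}')$ for a surjective $\pi\colon G\to\overline{G}$ with finite kernel $H$ of order $n$ and $\chr\FF\nmid n$, where $\mathcal{A}'$ is a quotient of $\cB$, hence isomorphic to $\mathcal{A}$ itself. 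But $\mathcal{A}$ carries only the trivial grading, so $\overline{\Gamma}$ is trivial, and then $L_\pi(\mathcal{A})$ carries a group-grading with $n$ one-dimensional homogeneous components in the centroid direction and trivial grading on each "copy of $\mathcal{A}$"; concretely this is the grading on $\mathcal{A}\otimes_\FF\FF H$ coming from the $H$-grading of $\FF H$. Whether this refines the trivial grading properly depends on whether $\FF H$ admits a nontrivial group-grading, i.e. whether $|H|>1$; so a proper refinement of the trivial grading exists as soon as we can realize some $H$ with $|H|=n\geq 2$ and $\chr\FF\nmid n$.

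Next I would observe that for $n\geq 3$ we can always find such an $H$: take $H=\ZZ/3$ if $\chr\FF\neq 3$ (so $n=3$ works), and in general $H$ a cyclic group of order $n$ when $\chr\FF\nmid n$, or — if $\chr\FF\mid n$ — pick instead any divisor argument; but the cleanest route for the "only if" direction is: if $n\geq 3$ then the trivial grading on $\cB$ is NOT fine. To see this directly, note that $\AAut_\FF(\mathcal{A}^{\times n})$ always contains the symmetric group $\mathbb{S}_n$ permuting the factors (as $\mathcal{A}$ is central simple, there are no "crossed" isomorphisms, so $\AAut_\FF(\mathcal{A}^{\times n})\cong \AAut_\FF(\mathcal{A})^n\rtimes \mathbb{S}_n$, and since $\AAut_\FF(\mathcal{A})$ is trivial — this is where no nontrivial group-gradings on the central simple $\mathcal{A}$ is used via Theorem \ref{th:diag_universal}/Theorem \ref{th:universal} — we get $\AAut_\FF(\mathcal{A}^{\times n})\cong \mathbb{S}_n$, the constant group scheme). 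A nontrivial group-grading on $\cB$ refining the trivial one corresponds to a nontrivial homomorphism from a diagonalizable group scheme $H^D$ into $\mathbb{S}_n$ landing in the diagonal automorphisms, equivalently a nontrivial grading of $\FF^{\,n}$ (the "diagonal subalgebra" $\FF1_{\mathcal{A}}\times\cdots\times\FF1_{\mathcal{A}}\cong\FF^n$ viewed inside $\cB$). Now $\FF^n$ admits a nontrivial group-grading iff its automorphism group scheme $\mathbb{S}_n$ has a nontrivial diagonalizable subgroup scheme iff $n$ has a prime factor $p\neq\chr\FF$ — which holds for every $n\geq 3$ regardless of $\chr\FF$, and for $n=2$ holds iff $\chr\FF\neq 2$. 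Thus the trivial grading on $\cB$ is fine exactly when $n=2$ and $\chr\FF=2$.

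For the converse ("if" direction), suppose $n=2$ and $\chr\FF=2$: I must show the trivial grading on $\mathcal{A}\times\mathcal{A}$ is a fine group-grading. By Theorem \ref{th:up_to_isomorphism}.(1), any group-grading $\Gamma'$ on $\mathcal{A}\times\mathcal{A}$ refining the trivial one is either a product of graded-simple gradings on the two factors — but each factor $\mathcal{A}$ has only the trivial grading, giving back the trivial grading — or it is itself graded-simple, in which case by part (2) it is a loop algebra $L_\pi(\mathcal{A}')$ with $|\ker\pi|=2$, forcing $\chr\FF\neq 2$ by the semisimplicity criterion of Theorem \ref{th:loop_semisimple}, contradicting $\chr\FF=2$. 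Hence no proper group-refinement exists, and the trivial grading is fine. I would write this out carefully, invoking Theorem \ref{th:loop_properties}.(5) only if needed to match up isomorphism types.

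The main obstacle I anticipate is making the identification $\AAut_\FF(\mathcal{A}^{\times n})\cong \mathbb{S}_n$ (or at least: its only diagonalizable subgroup schemes are those arising from the constant-group structure) fully rigorous in the nonassociative setting — one has to argue that every automorphism of $\mathcal{A}\otimes R$ permutes the $n$ minimal ideals and then descends to automorphisms of each factor, which are trivial; the subtlety is that over a non-reduced $R$ the ideals $\mathcal{A}\otimes R$ need not be "visibly" permuted, so one should instead argue at the level of the $R$-points being determined by their action on the idempotent-like structure, or — cleaner — simply bypass the group scheme language and argue directly with gradings as in the first two paragraphs, using Theorems \ref{th:up_to_isomorphism} and \ref{th:loop_semisimple} as the only inputs. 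I would in fact present the proof entirely through the loop algebra classification (paragraphs one and three above), which avoids this obstacle, and mention the automorphism-scheme picture only as a remark.
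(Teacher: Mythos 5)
Your construction for the case $\chr\FF\nmid n$ (the loop algebra $\cA\otimes_\FF\FF H$ over $\pi\colon H\to 1$ with $\lvert H\rvert=n$) and your argument that the trivial grading is fine when $n=2$, $\chr\FF=2$ (graded-simple would force a loop algebra with kernel of order $2$, contradicting semisimplicity via Theorem \ref{th:loop_semisimple}) are exactly the paper's arguments for those cases. The genuine gap is the remaining part of the ``only if'' direction: $n\geq 3$ with $\chr\FF\mid n$. The route you finally commit to (``paragraphs one and three only'') produces no nontrivial grading there, since no graded-simple grading on $\cA^{\times n}$ exists when $\chr\FF$ divides $n$; and your two proposed remedies both fail as written. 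The ``divisor argument'' is false: for $n=4$ and $\chr\FF=2$ every divisor $>1$ of $n$ is a power of $2$, so no loop algebra over a subgroup of order dividing $n$ helps. The group-scheme route rests on the claim that $\AAut_\FF(\cA)$ is trivial because $\cA$ has no nontrivial group-gradings; that implication is unjustified (absence of group-gradings only says $\AAut_\FF(\cA)$ has no nontrivial \emph{diagonalizable} subgroup schemes, not that it is trivial), and your criterion ``$\FF^n$ has a nontrivial grading iff $n$ has a prime factor $\neq\chr\FF$'' is also misstated ($\mathbb{S}_4$ in characteristic $2$ contains $3$-cycles, so $\FF^4$ is nontrivially gradable even though $4$ has no odd prime factor).

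The missing step is easy and is what the paper does: one only needs a nontrivial grading on a \emph{sub}-product, extended trivially to the remaining factors. If $\chr\FF\neq 2$, grade two of the copies via $\cA\otimes_\FF(\FF\times\FF)$ with the $\ZZ/2$-grading $\FF(1,1)\oplus\FF(1,-1)$; if $\chr\FF=2$ (so $n\geq 3$), grade three of the copies via the loop algebra over $C_3\to 1$, which is isomorphic to $\cA\times\cA\times\cA$ by Theorem \ref{th:loop_semisimple} since $\chr\FF\neq 3$; in either case take the product with the trivial grading on the remaining $n-2$ or $n-3$ copies. (Equivalently, in scheme language you only need the obvious containment of the permutation automorphisms in $\AAut_\FF(\cA^{\times n})$ and an element of order $2$ or $3$ prime to $\chr\FF$ — the full semidirect-product description and the triviality of $\AAut_\FF(\cA)$ are never needed.) With this step inserted, your proof coincides in substance with the paper's; minor points such as why $\lvert\ker\pi\rvert=2$ in the $n=2$ case (the centroid of $\cA\times\cA$ is $\FF\times\FF\cong\FF H$ by Theorem \ref{th:loop_properties}) should also be said explicitly.
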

\begin{proof}
If $n=2$ and $\chr\FF\neq 2$, then $\cA\times\cA$ is isomorphic to $\cA\otimes_\FF(\FF\times\FF)$. The
$C_2$-grading on $\FF\times\FF$ in Example \ref{ex:FxFtri} induces a nontrivial $C_2$-grading on 
$\cA\times\cA$, with $(\cA\times\cA)_e=\{(x,x)\mid x\in \cA\}$ and 
$(\cA\times\cA)_g=\{(x,-x)\mid x\in \cA\}$. Therefore the trivial grading 
on $\cA\times\cA$ is not fine.

If $n\geq 3$ and $\chr\FF\neq 2$, we may use the above to define a nontrivial grading on $\cA\times\cA$ and hence take the product grading with the trivial grading on the remaining factors to get a nontrivial grading on $\cA\times\cA\times\cdots\times\cA$. 

If $n\geq 3$ and $\chr\FF=2$, consider the cyclic group of order $3$: $C_3$, and its projection onto the trivial group $\pi:C_3\rightarrow 1$. Take $G=C_3$, $\overline{G}=1$ and $\overline{\Gamma}$ the trivial grading on $\cA$. Consider the associated loop algebra $(L_\pi(\cA),\Gamma,G,\delta)$. Its grading $\Gamma$ is not trivial, and $L_\pi(\cA)$ is isomorphic to $\cA\times\cA\times\cA$ (Theorem \ref{th:loop_semisimple}). Therefore there are nontrivial group-gradings on $\cA\times\cA\times\cA$, and hence also on the cartesian product of $n\geq 3$ copies of $\cA$.

On the other hand, if $n=2$ and $\chr\FF=2$, and if $\Gamma$ were a nontrivial group-grading on $\cA\times\cA$ with universal group $(U,\delta)$, then $(\cA\times\cA,\Gamma,U,\delta)$ would be a semisimple and graded-simple algebra, with centroid isomorphic to $\FF\times\FF$. By Theorem \ref{th:up_to_isomorphism}.(2) $(\cA\times\cA,\Gamma,U,\delta)$ would be isomorphic to a loop algebra of the form $(L_\pi(\cA'),\Gamma',U,\delta')$, with $\pi:U\rightarrow\overline{U}$ a surjective group homomorphism with kernel $H$ of order $2$ and a central simple $\overline{U}$-graded algebra $\cA'$. But then $L_\pi(\cA')$ would be semisimple, and this would contradict Theorem \ref{th:loop_semisimple}.
\end{proof}

\begin{theorem}\label{th:fine}
Let the ground field $\FF$ be algebraically closed.
For $i=1,\ldots,n$, let $\Gamma^i$ be a fine group-grading on an algebra $\cB^i$ such that $(\cB^i,\Gamma^i,U(\Gamma^i),\delta_{\Gamma^i}^U)$ is graded-central-simple.
Then the free product group-grading $(\Gamma^1\times\cdots\times\Gamma^n)_{\mathrm{gr}}$ on $\cB^1\times\cdots\times\cB^n$ is a fine group-grading if and only if either:
\begin{itemize}
\item 
$\chr\FF=2$ and for any index $i$ such that $\Gamma^i$ is trivial, there is at most one other index $j$ such that $(\cB^i,\Gamma^i)$ and $(\cB^j,\Gamma^j)$ are equivalent (i.e., $\Gamma^j$ is trivial and $\cB^j$ is isomorphic to $\cB^i$).

\item 
$\chr\FF\neq2$ and for any index $i$ such that $\Gamma^i$ is trivial, there is no other index $j$ such that $(\cB^i,\Gamma^i)$ and $(\cB^j,\Gamma^j)$ are equivalent.
\end{itemize}
\end{theorem}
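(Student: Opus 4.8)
The plan is to reduce the theorem, via Proposition~\ref{pr:fine_group} and Theorem~\ref{th:fine_nontrivial}, to understanding what happens among the factors with \emph{trivial} grading, and then to invoke Lemma~\ref{le:trivial}. First I would split the indices $1,\dots,n$ into the set $I_0$ of indices $i$ with $\Gamma^i$ trivial and the set $I_1$ of indices with $\Gamma^i$ nontrivial; since each $(\cB^i,\Gamma^i,U(\Gamma^i),\delta^U_{\Gamma^i})$ is graded-central-simple with $\FF$ algebraically closed, for $i\in I_0$ the algebra $\cB^i$ is in fact central simple with no nontrivial group-gradings (the universal group is trivial). Because the free product group-grading of the $\Gamma^i$ over all $i$ is, by associativity of the construction, the free product of the trivial grading on $\prod_{i\in I_0}\cB^i$ with the free product group-grading of the $\Gamma^i$, $i\in I_1$, on $\prod_{i\in I_1}\cB^i$, and because the latter is already a fine group-grading on a graded-central-simple-by-pieces algebra with all factors nontrivial (Theorem~\ref{th:fine_nontrivial}), the only possible source of non-fineness lies in the trivial part. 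More precisely, group the indices in $I_0$ by isomorphism class of $\cB^i$: write $I_0$ as a disjoint union of classes $C_1,\dots,C_s$ where within each class the algebras are mutually isomorphic, so that $\prod_{i\in C_k}\cB^i\cong \cA_k\times\cdots\times\cA_k$ ($n_k\bydef |C_k|$ copies) for a central simple $\cA_k$ with no nontrivial group-gradings.

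Next I would show that $(\Gamma^1\times\cdots\times\Gamma^n)_{\mathrm{gr}}$ is a fine group-grading if and only if the trivial grading on each $\prod_{i\in C_k}\cB^i$ is a fine group-grading. For the ``only if'' direction, a proper refinement of the trivial grading on a factor $\prod_{i\in C_k}\cB^i$ extends (by taking the product with the remaining pieces, using that the free product construction is functorial for coarsenings via Corollary~\ref{co:product_equivalence} and the argument in the proof of Proposition~\ref{pr:fine_group}) to a proper refinement of the whole free product group-grading. For the ``if'' direction, suppose each trivial grading on $\prod_{i\in C_k}\cB^i$ is fine and let $\Gamma'$ be a group-grading refining $(\Gamma^1\times\cdots\times\Gamma^n)_{\mathrm{gr}}$; restricting $\Gamma'$ to each factor: to the nontrivial factors $\cB^i$ ($i\in I_1$) we get $\Gamma^i$ back by the graded-simplicity argument of Theorem~\ref{th:fine_nontrivial}; to the block $\prod_{i\in C_k}\cB^i$ we get a refinement of the trivial grading, which by hypothesis is trivial. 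Then Proposition~\ref{pr:fine_group} forces $\Gamma'$ to coincide with the free product group-grading, so it is fine. The subtle point here is that when I restrict $\Gamma'$ to $\prod_{i\in C_k}\cB^i$ I need to know this subspace is a \emph{graded} ideal for $\Gamma'$; this follows because it is the sum of those $\cB^i$ with $i\in C_k$, each of which — being the image of a primitive idempotent of the centroid or, more simply, a minimal ideal sitting in a single homogeneous component of the coarse free product grading — is a sum of $\Gamma'$-homogeneous subspaces, and a minimal ideal generated by a homogeneous element.

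Then I would apply Lemma~\ref{le:trivial}: the trivial grading on $\cA_k\times\cdots\times\cA_k$ ($n_k$ copies, $n_k\ge 1$) is a fine group-grading if and only if $n_k=1$, or $n_k=2$ and $\chr\FF=2$. (For $n_k=1$ it is fine because $\cA_k$ admits no nontrivial group-grading at all.) Combining: $(\Gamma^1\times\cdots\times\Gamma^n)_{\mathrm{gr}}$ is fine iff every block $C_k$ has $|C_k|\le 1$, or $\chr\FF=2$ and every block has $|C_k|\le 2$. Translating back to the language of the statement: ``for any index $i$ with $\Gamma^i$ trivial there is no other $j$ with $(\cB^i,\Gamma^i)$ equivalent to $(\cB^j,\Gamma^j)$'' is exactly the condition that all blocks are singletons, while ``at most one other such $j$'' is exactly the condition that all blocks have size at most $2$. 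This yields precisely the two stated cases according to $\chr\FF\neq 2$ and $\chr\FF=2$.

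I expect the main obstacle to be the careful bookkeeping in the ``if'' direction of the reduction step, namely verifying that a refinement $\Gamma'$ of the free product group-grading must restrict to a \emph{graded} grading on each block $\prod_{i\in C_k}\cB^i$ and on each nontrivial $\cB^i$, so that Proposition~\ref{pr:fine_group} applies cleanly. The key sub-fact is that each simple ideal $\cB^i$ lies inside a single homogeneous component of the coarse free product grading and is generated, as an ideal, by any of its nonzero homogeneous elements — which is automatic since $\cB^i$ is simple — so it remains graded under any refinement. Everything else is a matter of assembling Proposition~\ref{pr:fine_group}, Theorem~\ref{th:fine_nontrivial}, and Lemma~\ref{le:trivial}, together with the associativity of the free product group-grading construction (immediate from Theorem~\ref{th:product_group_grading} and the description of universal groups of product gradings).
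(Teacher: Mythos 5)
Your overall strategy (isolate the trivially graded factors, handle the nontrivially graded ones by the argument of Theorem~\ref{th:fine_nontrivial}, and reduce the combinatorics to Lemma~\ref{le:trivial}) is the same as the paper's, and your ``only if'' direction and the final translation into the two stated conditions are fine. The genuine gap is exactly at the point you flag as ``the subtle point'' and then dismiss: you claim that each trivially graded factor $\cB^i$ (and hence each block $\prod_{i\in C_k}\cB^i$) is automatically a graded ideal for an arbitrary group-grading $\Gamma'$ refining $(\Gamma^1\times\cdots\times\Gamma^n)_{\mathrm{gr}}$, because $\cB^i$ ``sits inside a single homogeneous component of the coarse grading'' or is ``the image of a primitive idempotent of the centroid.'' Neither justification is valid. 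Sitting inside one coarse homogeneous component tells you only that this component is a sum of $\Gamma'$-homogeneous subspaces, not that $\cB^i$ itself is; and primitive idempotents of the centroid need not be $\Gamma'$-homogeneous. The diagonal $C_2$-grading on $\cA\times\cA$ of Example~\ref{ex:FxFtri} (equivalently, the refinement used in Lemma~\ref{le:trivial} when $\chr\FF\neq 2$) is a refinement of the trivial grading in which neither copy of $\cA$ contains a nonzero homogeneous element, neither copy is a graded ideal, and the primitive centroid idempotents $(1,0),(0,1)$ are not homogeneous. The contrast with Theorem~\ref{th:fine_nontrivial} is precisely that there a \emph{nonidentity} homogeneous component of the coarse grading lies entirely inside $\cB^i$, which produces a $\Gamma'$-homogeneous element of $\cB^i$; for a trivially graded factor no such component exists, so ``simple, hence generated by any homogeneous element'' has nothing to act on.

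Ruling this out under the stated hypotheses is where the real work lies, and it is not bookkeeping: the paper first shows that the sum $\cJ'$ of all trivially graded factors is a graded ideal for $\Gamma'$ (as the complement of the graded ideal spanned by the nontrivially graded factors, via the argument opening the proof of Theorem~\ref{th:up_to_isomorphism}), then decomposes $(\cJ',\Gamma'\vert_{\cJ'})$ into graded-simple ideals (Theorem~\ref{th:up_to_isomorphism}.(1)); each of these is graded-central-simple, hence by Theorem~\ref{th:loop_properties}.(4) a loop algebra of a central simple algebra, hence by Theorem~\ref{th:loop_semisimple} isomorphic to a product of $r$ copies of one simple algebra with $\chr\FF$ not dividing $r$; only then do Lemma~\ref{le:trivial} and the hypotheses force $r=1$, i.e.\ the graded-simple ideals of $\Gamma'\vert_{\cJ'}$ are the individual $\cB^j$'s, each necessarily trivially graded. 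Your proposal skips this loop-algebra step and replaces it by the false ``remains graded under any refinement'' claim, so as written the ``if'' direction does not go through; incorporating the centroid/loop-algebra argument would essentially reproduce the paper's proof.
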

\begin{proof}
As in the proof of Theorem \ref{th:fine_nontrivial}, identify $\cB^i$ with the ideal $0\times\cdots\times\cB^i\times\cdots\times 0$ of $\cB=\cB^1\times\cdots\times\cB^n$, and write $\cB=\cB^1\oplus\cdots\oplus\cB^n$.

If $\chr\FF=2$ and there are three indices $i,j,k$ with trivial gradings $\Gamma^i$, $\Gamma^j$ and $\Gamma^k$ and such that $\cB^i$, $\cB^j$ and $\cB^k$ are isomorphic, then the graded-central-simplicity implies the central simplicity of these algebras, and Lemma \ref{le:trivial} shows that there is a nontrivial grading on $\cB^i\oplus\cB^j\oplus\cB^k$. Therefore the induced grading
$(\Gamma^1\times\cdots\times\Gamma^n)_{\mathrm{gr}}\vert_{\cB^i\oplus\cB^j\oplus\cB^k}$ (which is the trivial grading) is not fine and Proposition \ref{pr:fine_group} shows that $(\Gamma^1\times\cdots\times \Gamma^n)_{\mathrm{gr}}$ is not fine. The situation for $\chr\FF\neq 2$, with two indices $i,j$ with trivial gradings $\Gamma^i$ and $\Gamma^j$ and such that $\cB^i$ and $\cB^j$ are isomorphic, is similar.

On the other hand, assume that the hypotheses on the trivial gradings are satisfied. The arguments in the proof of Theorem \ref{th:fine_nontrivial} show that if $\Gamma'$ is a group-grading on $\cB$ that refines $(\Gamma^1\times\cdots\times\Gamma^n)_{\mathrm{gr}}$, and if $\Gamma^i$ is not trivial, then $\cB^i$ is a graded ideal for $\Gamma'$, and $\Gamma'\vert_{\cB^i}$ coincides with $\Gamma^i$.

Consider the subset of indices
\[
J=\{i\mid 1\leq i\leq n\ \text{and $\Gamma^i$ is the trivial grading: $\cB^i=(\cB^i)_e$}\}.
\]
As $\cJ=\bigoplus_{i\not\in J}\cB^i$ is a graded ideal for $\Gamma'$, the first arguments in the proof of Theorem \ref{th:up_to_isomorphism} show that $\cJ'=\bigoplus_{j\in J}\cB^j$ is also a graded ideal of $\cB$ for $\Gamma'$. For $j\in J$, $\cB^j$ is central simple (being graded-central-simple and $\Gamma^j$ being trivial), so $\cJ'$ is semisimple. Thus each ideal of $\cJ'$ is of the form $\cB^{j_1}\oplus\cdots\oplus\cB^{j_r}$ for some indices $j_1<\cdots<j_r$ in $J$. The centroid of such an ideal is the cartesian product of $r$ copies of $\FF$. In particular its dimension is finite.

Then $\cJ'$ is a direct sum of graded-central-simple ideals for $\Gamma'\vert_{\cJ'}$ (Theorem \ref{th:up_to_isomorphism}.(1)), each of which is isomorphic to a loop algebra (Theorem \ref{th:loop_properties}.(4)) with finite kernel, and hence isomorphic to the cartesian product of a number of copies of a simple algebra, with $\chr\FF$ not dividing this number of copies (Theorem \ref{th:loop_semisimple}). Our hypotheses imply, due to Lemma \ref{le:trivial}, that the graded-simple ideals of $\Gamma'\vert_{\cJ'}$ are precisely the $\cB^j$'s, $j\in J$ and, since $\Gamma^j$ is fine, $\Gamma'\vert_{\cB^j}$ is the trivial grading for any $j\in J$. We conclude that $\Gamma'=(\Gamma^1\times\cdots\times\Gamma^n)_{\mathrm{gr}}$.
\end{proof}

Our last result classifies fine group-gradings, up to equivalence, in finite-dimensional semisimple algebras over an algebraically closed field. Recall that a finite-dimensional (graded-)simple algebra over an algebraically closed field is (graded-)central-simple. (See the last part of the proof of Theorem \ref{th:up_to_isomorphism}.)

\begin{corollary}\label{co:fine_semisimple}
Let the ground field $\FF$ be algebraically closed.
\begin{enumerate}
\item 
Any fine group-grading on a finite-dimensional semisimple algebra is equivalent to a free product group-grading $(\Gamma^1\times\cdots\times\Gamma^n)_{\mathrm{gr}}$, with the $\Gamma^i$'s being fine group-gradings on a semisimple graded-simple algebra $\cB^i$, satisfying one of the following extra conditions:
\begin{itemize}
\item 
$\chr\FF=2$ and for any index $i$ such that $\Gamma^i$ is trivial, there is at most one other index $j$ such that $(\cB^i,\Gamma^i)$ is equivalent to $(\cB^j,\Gamma^j)$.

\item 
$\chr\FF\neq2$ and for any index $i$ such that $\Gamma^i$ is trivial, there is no other index $j$ such that $(\cB^i,\Gamma^i)$ is equivalent to $(\cB^j,\Gamma^j)$.
\end{itemize}
And conversely, any such free product group-grading is a fine group-grading.

Moreover, the factors $(\cB^i,\Gamma^i)$ are uniquely determined, up to reordering and equivalence.

\item
Any finite-dimensional graded-simple algebra $(\cB,\Gamma')$ such that $\Gamma'$ is a fine group-grading is equivalent to a loop algebra $(L_\pi(\cA),\Gamma,U,\delta)$ associated to a surjective group homomorphism $\pi:U\rightarrow\overline{U}$ with finite kernel, and a simple finite-dimensional graded algebra $(\cA,\overline{\Gamma},\overline{U},\bar\delta)$ with $\overline{\Gamma}$ a fine group-grading with universal group $(\overline{U},\bar\delta)$.

 Conversely, if $(\cA,\overline{\Gamma},\overline{U},\bar\delta)$ is a simple finite-dimensional graded algebra with $\overline{\Gamma}$ a fine group-grading with universal group $(\overline{U},\bar\delta)$, and $\pi:U\rightarrow \overline{U}$ is a surjective group homomorphism with finite kernel, then the corresponding grading on the associated loop algebra is fine.

Moreover, in this situation $\cB$ is semisimple if and only if $\chr\FF$ does not divide the order of $\ker\pi$.

\item
For $i=1,2$, let $(\cA^i,\overline{\Gamma}^i,\overline{U}^i,\bar \delta^i)$ consist of a simple algebra $\cA^i$ endowed with a fine group-grading $\overline{\Gamma}^i$ with universal group $(\overline{U}^i,\bar \delta^i)$, and let $\pi^i: U^i\rightarrow \overline{U}^i$ be a surjective group homomorphism. Let $(L_{\pi^i}(\cA^i),\Gamma^i,U^i,\delta^i)$ be the associated loop algebra. Then the group-graded algebras $(L_{\pi^1}(\cA^1),\Gamma^1)$ and $(L_{\pi^2}(\cA^2),\Gamma^2)$ are equivalent if and only if the group-graded algebras $(\cA^1,\overline{\Gamma}^1)$ and $(\cA^2,\overline{\Gamma}^2)$ are equivalent and there is an equivalence $\varphi:(\cA^1,\overline{\Gamma}^1)\rightarrow (\cA^2,\overline{\Gamma}^2)$ such that the associated group isomorphism $\alpha^U_\varphi:\overline{U}^1\rightarrow \overline{U}^2$ in Proposition \ref{pr:equivalence} extends to a group isomorphism $\tilde\alpha^U_\varphi:U^1\rightarrow U^2$. (This means that the diagram
\[
\xymatrix{
U^1\ar[r]^{\tilde\alpha^U_\varphi}\ar[d]_{\pi^1} & U^2\ar[d]^{\pi^2}\\
\overline{U}^1\ar[r]^{\alpha^U_\varphi} & \overline{U}^2
}
\]
is commutative.)
\end{enumerate}
\end{corollary}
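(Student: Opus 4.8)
The plan is to assemble the three items from the results of Sections~\ref{se:gradings}--\ref{se:up_to_isomorphism}. For item~(1), I would begin, exactly as in the first paragraph of the proof of Theorem~\ref{th:up_to_isomorphism}, by decomposing the given finite-dimensional semisimple $\cB$ into its finitely many minimal graded ideals $\cB=\cB^1\oplus\cdots\oplus\cB^n$; each $\cB^i$ is a sum of some of the simple ideals of $\cB$, hence finite-dimensional and semisimple, and is graded-simple for the induced grading $\Gamma^i=\Gamma\vert_{\cB^i}$, because a graded ideal of the direct-summand ideal $\cB^i$ is again a graded ideal of $\cB$. Over the algebraically closed $\FF$ the neutral component of the centroid of $\cB^i$ is $\FF1$, so $(\cB^i,\Gamma^i,U(\Gamma^i),\delta^U_{\Gamma^i})$ is graded-central-simple. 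Then Proposition~\ref{pr:fine_group} gives that each $\Gamma^i$ is a fine group-grading and that $\Gamma$ is equivalent to the free product $(\Gamma^1\times\cdots\times\Gamma^n)_{\mathrm{gr}}$; since fineness is inherited by equivalent gradings, this free product is fine, and Theorem~\ref{th:fine} then forces the stated restriction on the trivial factors. The converse direction is exactly the ``if'' part of Theorem~\ref{th:fine}, the product algebra being finite-dimensional and semisimple. For uniqueness, I would observe that in any such realization the subalgebras $0\times\cdots\times\cB^i\times\cdots\times 0$ are graded ideals of the product on which the induced grading is $\Gamma^i$ (using Theorem~\ref{th:product_group_grading} to describe the homogeneous components), and that, by graded-simplicity of the $(\cB^i,\Gamma^i)$ together with the argument of Theorem~\ref{th:up_to_isomorphism}.(1), they are precisely the minimal graded ideals of the product; as the family of minimal graded ideals is intrinsic and is carried to the corresponding family by any equivalence, the factors are determined up to reordering and equivalence.

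For item~(2), I would use that a finite-dimensional graded-simple algebra over an algebraically closed field is graded-central-simple, so Theorem~\ref{th:up_to_isomorphism}.(2) (which rests on Theorem~\ref{th:loop_properties}.(4) and Theorem~\ref{th:loop_semisimple}) identifies $(\cB,\Gamma',U,\delta)$, up to $U$-graded isomorphism, with a loop algebra $(L_\pi(\cA),\Gamma,U,\delta)$ for a surjective $\pi\colon U\to\overline U$ of finite kernel and a central simple $\overline U$-graded $(\cA,\overline\Gamma,\overline U,\bar\delta)$, with $\cA$ a simple quotient of $\cB$, and with $\cB$ semisimple if and only if $\chr\FF$ does not divide $\lvert\ker\pi\rvert$. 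Since $(U,\delta)$ is the universal group of $\Gamma'\cong\Gamma$, Corollary~\ref{co:loop_universal} then shows $(\overline U,\bar\delta)$ is the universal group of $\overline\Gamma$, and the first bullet of Proposition~\ref{pr:loop_fine} shows $\overline\Gamma$ is fine. The converse is the second bullet of Proposition~\ref{pr:loop_fine} (a simple finite-dimensional algebra over an algebraically closed field being central simple) together with Theorem~\ref{th:loop_semisimple}.

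For item~(3), Corollary~\ref{co:loop_universal} first gives that $(U^i,\delta^i)$ is the universal group of $\Gamma^i$. Given an equivalence $\psi\colon(L_{\pi^1}(\cA^1),\Gamma^1)\to(L_{\pi^2}(\cA^2),\Gamma^2)$ with induced group isomorphism $\alpha^U_\psi\colon U^1\to U^2$ (Proposition~\ref{pr:equivalence}), I would transport the grading of $L_{\pi^1}(\cA^1)$ along $\alpha^U_\psi$ so that, by Proposition~\ref{pr:equivalence_iso}, $\psi$ becomes an isomorphism of $U^2$-graded algebras between two loop algebras over $U^2$: namely $L_{\pi^2}(\cA^2)$ and the loop algebra of the $\overline U^1$-graded $\cA^1$ along $\pi^1\circ(\alpha^U_\psi)^{-1}\colon U^2\to\overline U^1$. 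Viewing $\overline U^1$ and $\overline U^2$ as the quotients of $U^2$ by the kernels $H^1$ and $H^2$ of these two surjections, Theorem~\ref{th:loop_properties}.(5) (applied with $G=U^2$) yields $H^1=H^2$ and a $U^2/H^1$-graded isomorphism $\cA^1\to\cA^2$; unwinding the identifications, this is an equivalence $\varphi\colon(\cA^1,\overline\Gamma^1)\to(\cA^2,\overline\Gamma^2)$ whose associated isomorphism $\alpha^U_\varphi\colon\overline U^1\to\overline U^2$ is the one induced by $\alpha^U_\psi$ on the quotients; hence $\alpha^U_\varphi$ extends to $\tilde\alpha^U_\varphi:=\alpha^U_\psi$ and the displayed square commutes. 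For the converse, given $\varphi$ and $\tilde\alpha^U_\varphi$ with $\pi^2\circ\tilde\alpha^U_\varphi=\alpha^U_\varphi\circ\pi^1$, the commutativity yields $\varphi\bigl(\cA^1_{\pi^1(u)}\bigr)=\cA^2_{\pi^2(\tilde\alpha^U_\varphi(u))}$ for all $u\in U^1$, so $x\otimes u\mapsto\varphi(x)\otimes\tilde\alpha^U_\varphi(u)$ (for $u\in U^1$ and $x\in\cA^1_{\pi^1(u)}$) is a well-defined algebra isomorphism $L_{\pi^1}(\cA^1)\to L_{\pi^2}(\cA^2)$ carrying $L_{\pi^1}(\cA^1)_u$ onto $L_{\pi^2}(\cA^2)_{\tilde\alpha^U_\varphi(u)}$, i.e.\ an equivalence of group-graded algebras.

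The step I expect to be the main obstacle is the bookkeeping in item~(3): Theorem~\ref{th:loop_properties}.(5) compares loop algebras over a \emph{common} ambient group and via \emph{graded isomorphisms}, whereas I start from an \emph{equivalence} between loop algebras whose ambient (universal) groups $U^1$ and $U^2$ are only abstractly isomorphic. Converting the equivalence into an isomorphism through Proposition~\ref{pr:equivalence_iso}, transporting one grading so that both algebras become loop algebras over $U^2$ via quotient maps, and checking that the resulting identification of the groups $\overline U^i$ is precisely the one induced by $\alpha^U_\psi$ — this is where the care is needed; everything else should follow by direct appeal to the results of the previous sections.
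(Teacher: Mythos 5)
Your proposal is correct and follows essentially the same route as the paper: part (1) via the decomposition into graded-simple ideals from Theorem \ref{th:up_to_isomorphism}.(1) together with Proposition \ref{pr:fine_group} and Theorem \ref{th:fine}; part (2) via Theorem \ref{th:up_to_isomorphism}.(2), Corollary \ref{co:loop_universal} and Proposition \ref{pr:loop_fine}; part (3) via Propositions \ref{pr:equivalence} and \ref{pr:equivalence_iso} and Theorem \ref{th:loop_properties}.(5). The only (harmless) difference is in part (3): the paper first deduces $\alpha^U_\psi(\ker\pi^1)=\ker\pi^2$ from the fact that $\ker\pi^i$ is the support of the grading induced on the centroid, and then applies Theorem \ref{th:loop_properties}.(5), whereas you transport everything to the common group $U^2$ and read off the equality of kernels from that theorem's conclusion --- the same ingredients in a slightly different order.
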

\begin{proof}
As in the proof of Theorem \ref{th:up_to_isomorphism}.(1) any semisimple graded algebra is uniquely, up to a permutation of the summands, a direct sum of graded-simple ideals. Now part (1) follows from Proposition \ref{pr:fine_group} and Theorem \ref{th:fine}.

Part (2) follows from Theorem \ref{th:up_to_isomorphism}.(2)   and Proposition \ref{pr:loop_fine}.

For part (3) note that by Corollary \ref{co:loop_universal}, $(U^i,\delta^i)$ is, up to isomorphism, the universal group of $(L_{\pi^i}(\cA^i),\Gamma^i)$ for $i=1,2$. An equivalence $\psi:(L_{\pi^1}(\cA^1),\Gamma^1)\rightarrow (L_{\pi^2}(\cA^2),\Gamma^2)$ induces, by Proposition \ref{pr:equivalence}, a group isomorphism $\alpha^U_{\psi}:U^1\rightarrow U^2$. Since $\ker\pi^i$ is the support of the grading induced by $\Gamma^i$ on the centroid $C\bigl(L_{\pi^i}(\cA^i)\bigr)$, it follows that $\alpha^U_\psi$ takes $\ker\pi^1$ to $\ker\pi^2$, and hence induces an isomorphism $\bar\alpha^U_\psi:\overline{U}^1\rightarrow \overline{U}^2$. By Proposition \ref{pr:equivalence_iso}, $\psi$ becomes an isomorphism of $U^2$-graded algebras, which induces an isomorphism $\bar\psi:\cA^1\rightarrow \cA^2$ of $\overline{U}^2$-graded algebras by Theorem \ref{th:loop_properties}.(5) which, in turn, is an equivalence $\bar\psi:(\cA^1,\overline{\Gamma}^1)\rightarrow(\cA^2,\overline{\Gamma}^2)$ whose associated isomorphism $\overline{U}^1\rightarrow\overline{U}^2$ is $\bar\alpha^U_\psi$.

The converse is clear, if $\varphi:(\cA^1,\overline{\Gamma}^1)\rightarrow(\cA^2,\overline{\Gamma}^2)$ is an equivalence whose associated group isomorphism $\alpha^U_\varphi:\overline{U}^1\rightarrow \overline{U}^2$ extends to a group isomorphism $\tilde\alpha^U_\varphi:{U}^1\rightarrow {U}^2$, then the map $\psi$ given by $x\otimes g\mapsto \varphi(x)\otimes\tilde\alpha^U_\varphi(g)$, for $g\in U$ and $x\in (\cA^1)_{\pi^1(g)}$, is an equivalence $(L_{\pi^1}(\cA^1),\Gamma^1)\rightarrow (L_{\pi^2}(\cA^2),\Gamma^2)$.
\end{proof}

In general, the group isomorphism $\alpha^U_\varphi$ at the end of the previous proof cannot be extended to a group isomorphism $U^1\rightarrow U^2$, as our next example shows.

\begin{example}\label{ex:no_extension}
Let $\cJ=\FF 1\oplus\FF u\oplus\FF v$ be the unital commutative algebra with $u^2=v^2=1$, $uv=0$. This is the Jordan algebra of a two-dimensional quadratic form. It is simple. 

Consider the group-grading $\overline{\Gamma}^1$ on $\cJ$ by $\overline{U}=(\ZZ/2)^2$, with 
\[
\cJ_{(\bar{0},\bar{0})}=\FF 1,\quad \cJ_{(\bar{1},\bar{0})}=\FF u,\quad \cJ_{(\bar{1},\bar{1})}=\FF v.
\]
 If $\bar\delta^1$ denotes the corresponding degree map, $(\overline{U},\bar\delta^1)$ is, up to isomorphism, the universal group of the grading. 
 
Consider also the group-grading $\overline{\Gamma}^2$ by the same group with 
\[
\cJ_{(\bar{0},\bar{0})}=\FF 1,\quad \cJ_{(\bar{0},\bar{1})}=\FF u,\quad \cJ_{(\bar{1},\bar{1})}=\FF v.
\]
Again, with $\bar\delta^2$ being the degree map of $\overline{\Gamma}^2$, $(\overline{U},\bar\delta^2)$ is the universal group of $\Gamma^2$.

The identity map gives an equivalence $\id:(\cJ,\overline{\Gamma}^1)\rightarrow(\cJ,\overline{\Gamma}^2)$. The associated group isomorphism $\alpha^U_\id:\overline{U}\rightarrow\overline{U}$ is the swap map $(a,b)\mapsto (b,a)$.

Let $U=\ZZ/4\times\ZZ/2$, and let $\pi$ be the natural projection map $U\rightarrow \overline{U}$ which is the identity on the second component and the projection $\ZZ/4\rightarrow \ZZ/2$ on the first component. Then $\alpha^U_\id$ does not extend to a group isomorphism $U\rightarrow U$, and therefore $\id$ does not extend to an equivalence of the associated loop algebras $(L_{\pi}(\cJ),\Gamma^1,U,\delta^1)$ and $(L_\pi(\cJ),\Gamma^2,U,\delta^2)$.  The same happens with any other equivalence $\varphi:(\cJ,\overline{\Gamma}^1)\rightarrow(\cJ,\overline{\Gamma}^2)$.

If $\FF$ is algebraically closed of characteristic not $2$, then $L_\pi(\cJ)$ is isomorphic to $\cJ\times\cJ$ (Theorem \ref{th:loop_semisimple}), and we obtain two non-equivalent gradings on the semisimple algebra $\cJ\times\cJ$.
\end{example}

We finish the paper going back to the example $\cL=\frsl_2\times\frsl_2$.

\begin{example}\label{ex:sl2.cartesian.product}
Let $\FF$ be an algebraically closed field of characteristic not $2$. Corollary \ref{co:fine_semisimple} tells us that, up to equivalence, the fine gradings on $\cL=\frsl_2\times\frsl_2$ are:
\begin{itemize}
\item The free product group-gradings $(\Gamma_{\frsl_2}^1\times \Gamma_{\frsl_2}^1)_{\mathrm{gr}}$, $(\Gamma_{\frsl_2}^1\times \Gamma_{\frsl_2}^2)_{\mathrm{gr}}$, and $(\Gamma_{\frsl_2}^2\times \Gamma_{\frsl_2}^2)_{\mathrm{gr}}$ in Example \ref{ex:sl2.product.group-grading}, with respective universal groups $\ZZ^2$, $\ZZ\times\bigl(\ZZ/2\bigr)^2$, and $\bigl(\ZZ/2\bigr)^4$.

\item The grading $\Gamma_\cL^1(\ZZ\times\ZZ/2,(0,\bar 1),1)$ in Example \ref{ex:sl2xsl2_iso}, with universal group $U=\ZZ\times\ZZ/2$. The group $\overline{U}=U/\langle (0,\bar 1)\rangle$ is identified naturally with $\ZZ$. Note that the fine group-grading $\Gamma_{\frsl_2}^1$ is precisely $\Gamma_{\frsl_2}^1(\ZZ,1)$. This grading is determined explicitly using Equation \eqref{eq:Gamma_LGhg}.

\item The grading $\Gamma_\cL^2\Bigl(\bigl(\ZZ/2\bigr)^3,(\bar 0,\bar 0,\bar 1),\bigl(\ZZ/2\bigr)^2\Bigr)$ in Example \ref{ex:sl2xsl2_iso}, with universal group $U=\bigl(\ZZ/2\bigr)^3$. Here the group $\overline{U}=U/\langle (\bar 0,\bar 0,\bar 1)\rangle$ is identified with $\bigl(\ZZ/2\bigr)^2$. The fine group-grading $\Gamma_{\frsl_2}^2$ is $\Gamma^2_{\frsl_2}(G,T)$ with $G=T=\bigl(\ZZ/2\bigr)^2$.
This grading is determined explicitly using Equation \eqref{eq:Gamma_L2GhT}.

\item The grading $\Gamma_\cL^2(\ZZ/4\times\ZZ/2,(\widehat{2},\bar 0),\ZZ/2\times\ZZ/2)$. Here we denote by $\widehat{m}$ the class of the integer $m$ modulo $4$ and restrict the usual notation $\bar{m}$ for the class of $m$ modulo $2$. The surjective group homomorphism $\pi$ is the canonical homomorphism $\ZZ/4\times\ZZ/2\rightarrow \ZZ/2\times\ZZ/2$, $(\widehat{m},\bar n)\mapsto (\bar{m},\bar n)$.

Let us give a precise description of this grading. The nontrivial character $\chi$ on $\langle h=(\widehat{2},\bar 0)\rangle$ extends to the character $\chi$ on $U=\ZZ/4\times\ZZ/2$ by $\chi(\widehat{m},\bar n)=\mathbf{i}^m$, where $\mathbf{i}$ denotes a square root of $-1$ in $\FF$. 

The grading on the loop algebra $L_\pi(\frsl_2)$ is given by
\[
L_{\pi}(\frsl_2)_{(\widehat{m},\bar n)}= (\frsl_2)_{(\bar{m},\bar n)} \otimes (\widehat{m},\bar n)
\]
for the homogeneous components $(\frsl_2)_{(\bar{m} ,\bar n)}$ in Equation \eqref{eq:Gamma2sl2}, and through the isomorphism $\Phi$ in Equation \eqref{eq:LpiA_An}, our grading $\Gamma_\cL^2(\ZZ/4\times\ZZ/2,(\widehat{2},\bar 0),\ZZ/2\times\ZZ/2)$ on $\cL=\frsl_2\times\frsl_2$ is given by
\[
\cL_{(\widehat{m},\bar n)}=\{(x,\mathbf{i}^mx)\mid x\in (\frsl_2)_{(\bar{m},\bar n)}\}.
\]
That is,
\[
\begin{array}{ll}
\cL_{(\widehat{1},\bar 0)}= \FF(H,\mathbf{i}H), & \cL_{(\widehat{3},\bar 0)}= \FF(H,-\mathbf{i}H),\\
\cL_{(\widehat{0},\bar 1)}= \FF(E+F,E+F), & \cL_{(\widehat{2},\bar 1)}= \FF(E+F,-(E+F)),\\
\cL_{(\widehat{1},\bar 1)}= \FF(E-F,\mathbf{i}(E-F)), & \cL_{(\widehat{3},\bar 1)}= \FF(E-F,-\mathbf{i}(E-F)).
\end{array}
\]
\end{itemize}
\end{example}

\bigskip



\begin{thebibliography}{KMRT98}

\bibitem[ABFP08]{loop}
B.~Allison, S.~Berman, J.~Faulkner, and A.~Pianzola, \emph{Realization of 
graded-simple algebras as loop algebras}, Forum Math. \textbf{20} (2008), no.~3, 395--432.

\bibitem[CDE18]{CDE}
A.S.~C\'ordova-Mart{\'\i}nez, A.~Darehgazani, and A.~Elduque, \emph{On Kac's Jordan superalgebra}, to appear in Linear Multilinear Algebra, arXiv:1701.06798.

\bibitem[DE16]{DE}
C.~Draper and A.~Elduque, \emph{An overview of fine gradings on simple Lie algebras}, Note Mat. \textbf{36} (2016), suppl.~1, 15--34.

\bibitem[Eld98]{Eld98}
A. Elduque, \emph{Gradings on octonions}, J.~Algebra \textbf{207} (1998), no.~1, 342--354.

\bibitem[Eld09]{Eld09}
A. Elduque, \emph{More non-semigroup Lie gradings}, Linear Algebra Appl. \textbf{431} (2009), 1603--1606.

\bibitem[Eld16]{Eld16}
A.~Elduque, \emph{Gradings on algebras over algebraically closed fields}, in: Gueye, C.T., Molina, M.S. (eds.)
Non-Associative \& Non-Commutative Algebra and Operator Theory. NANCAOT, Dakar, Senegal, May 23--25, 2014: 
Workshop in Honor of Professor Amin Kaidi. Springer Proceedings in Mathematics and Statistics \textbf{160} (2016), 113--121.

\bibitem[EK13]{EKmon} 
A.~Elduque and M.~Kochetov, \emph{Gradings on simple {L}ie algebras}, Mathematical Surveys and Monographs, vol.~189,  American Mathematical Society, Providence, RI, 2013.

\bibitem[EM94]{EldMyung}
A.~Elduque and H.C.~Myung, \emph{Mutations of alternative algebras}, 
Mathematics and its Applications vol.~278, Kluwer Academic Publishers Group, Dordrecht, 1994.

\bibitem[HPP98]{HPP98}
M.~Havl\'{\i}cek, J.~Patera, and E.~Pelantova, 
\emph{On Lie gradings. II},
Linear Algebra Appl. \textbf{277} (1998), no. 1--3, 97--125.


\bibitem[Jac62]{Jacobson}
N.~Jacobson, \emph{Lie Algebras},  Interscience Tracts in Pure and Applied Mathematics, vol.~10, Interscience Publishers (a division of John Wiley \& Sons), New York--London 1962.

\bibitem[Jac89]{JacBAII}
N.~Jacobson, \emph{Basic algebra. II}, 2nd edition,
W. H. Freeman and Company, New York 1989.


\bibitem[KMRT98]{KMRT}
M.-A.~Knus, A.~Merkurjev, M.~Rost, and J.-P.~Tignol, \emph{The book of involutions}, American Mathematical Society Colloquium Publications, vol.~44, American Mathematical Society, Providence, RI, 1998, With a preface in French by J. Tits. 

\bibitem[PZ89]{PZ89}
J.~Patera and H.~Zassenhaus, H., \emph{On Lie gradings. I}, Linear Algebra Appl. \textbf{112} (1989),
87--159.

\bibitem[Wat79]{Wh}
W.C.~Waterhouse, \emph{Introduction to affine group schemes}, 
Graduate Texts in Mathematics, vol.~66, Springer-Verlag, New York--Berlin, 1979.

\bibitem[Yu16]{YuExc}
J.~Yu, \emph{Maximal abelian subgroups of compact simple Lie groups of type $E$}, Geom. Dedicata \textbf{185} (2016), 205--
269.


\end{thebibliography}
\end{document}